\newtheorem{theorem}{Theorem}
\newtheorem*{theorem*}{Theorem}
\newtheorem{lemma}{Lemma}
\newtheorem{proposition}{Proposition}[section]
\theoremstyle{definition}
\newtheorem{definition}{Key Proposition}
\newtheorem{corollary}{Corollary}
\theoremstyle{remark}
\numberwithin{equation}{section}
\newcommand\numberthis{\addtocounter{equation}{1}\tag{\theequation}}
\theoremstyle{remark}
\newcommand{\Prob}{\mathbb{P}}
\newcommand{\E}{\mathbb{E}}
\newcommand{\beq}{\begin{equation}}
\newcommand{\eeq}{\end{equation}}
\newcommand{\beqn}{\begin{equation*}}
\newcommand{\eeqn}{\end{equation*}}
\newcommand{\bea}{\begin{eqnarray}}
\newcommand{\eea}{\end{eqnarray}}
\newcommand{\bean}{\begin{eqnarray*}}
\newcommand{\eean}{\end{eqnarray*}}
\newcommand{\be}{\begin{enumerate}}
\newcommand{\ee}{\end{enumerate}}
\newcommand{\bi}{\begin{itemize}}
\newcommand{\ei}{\end{itemize}}
\newcommand{\bd}{\begin{description}}
\newcommand{\ed}{\end{description}}
\begin{document}
\title[Bounds for weighted sums of Rademacher multiplicative functions]{Almost sure bounds for weighted sums of Rademacher multiplicative functions}
\author{Christopher Atherfold}
\begin{abstract}
    We prove that when $f$ is a Rademacher random multiplicative function for any $\epsilon>0$, then $\sum_{n \leqslant x}\frac{f(n)}{\sqrt{n}} \ll (\log\log(x))^{3/4+\epsilon}$ for almost all $f$. We also show that there exist arbitrarily large values of $x$ such that $\sum_{n \leqslant x}\frac{f(n)}{\sqrt{n}} \gg (\log\log(x))^{-1/2}$. This is different to what is found in the Steinhaus case, this time with the size of the Rademacher Euler product making the multiplicative chaos contribution the dominant one. We also find a sharper upper bound when we restrict to integers with a prime factor greater than $\sqrt{x}$, proving that $\sum_{\substack{n \leqslant x \\ P(n) > \sqrt{x}}}\frac{f(n)}{\sqrt{n}} \ll (\log\log(x))^{1/4+\epsilon}$. 
\end{abstract}
\maketitle
\section{Introduction}
One of the central problems studied in analytic number theory is the Riemann Hypothesis for the Riemann zeta function. One of the many equivalent conditions to the Riemann hypothesis is M\"{o}bius cancellation, which is stated as for $\mu(n)$ the M\"{o}bius function, and any $\epsilon>0$, we have that
\begin{equation*}
    \sum_{n \leqslant x}\mu(n) \leqslant x^{1/2+\epsilon}.
\end{equation*}
The structure of $\mu(n)$ is rather mysterious and it is very difficult to understand the internal cancellations within this sum. A possible model for the partial sums of the Mobius function are partial sums of Rademacher random multiplicative functions, introduced by Wintner. The Rademacher multiplicative function $f(n)$ is defined as independent Rademacher random variables on the primes $p$, and is extended to the squarefree integers multiplicatively (it takes values of $0$ on non-squarefree numbers). In his 1944 paper \cite{wintner1944}, Wintner proved that for any $\epsilon > 0$, one almost surely has
\begin{equation*}
    \sum_{n \leqslant x}f(n) = O(x^{1/2+\epsilon})
\end{equation*}
and by partial summation we deduce that
\begin{equation*}
    M_f(x) := \sum_{n \leqslant x}\frac{f(n)}{\sqrt{n}} = O(x^\epsilon)
\end{equation*}
almost surely. The unweighted version of this problem has attracted a lot of attention recently, with the best known upper and lower bounds achieved by Caich and Harper respectively \cite{caich23}, \cite{harper23}, where they have shown that for all $\epsilon > 0$, one has
\begin{equation*}
    \sum_{n \leqslant x}f(n) \ll \sqrt{x}(\log\log(x))^{3/4+\epsilon}
\end{equation*}
and that there exist arbitrarily large values of $x$ such that
\begin{equation*}
    \sum_{n \leqslant x}f(n) \geqslant \sqrt{x}(\log\log(x))^{1/4-\epsilon}.
\end{equation*}
Note that the above is a slight simplification of Harper's result (which is slightly sharper than the version we state here). In view of these results, it seems unlikely that this particular model reflects the truth of detecting large fluctuations of partial sums of the Mobius function. Much work has also been developed to establish sharp bounds on the moments of sums of these multiplicative functions, with the sharp cases being identified by Harper \cite{aharper201}, \cite{harper202}. Finally, we note that following the groundbreaking work Gorodetsky and Wong determining the distribution for sums of Steinhaus random multiplicative functions over integers \cite{gorodetsky2025limiting}, one could be able to refine this to give a quantitative central limit theorem, which could give a more standard procedure to determine the large fluctuations of such sums.\par
The case of studying weighted sums of random multiplicative functions has focused on the case when $f$ is a Steinhaus multiplicative functions thus far. These are defined by choosing $f(p)$ as random variables which are uniformly distributed on the complex unit circle and $f(n)$ is defined completely multiplicatively (we add at this point that sums of Steinhaus random multiplicative functions might be a good model for studying $\mu(n)n^{it}$ for example). Steinhaus multiplicative functions will be referred to as $f_{st}$ from now on. Moments of $M_{f_{st}}(x)$ have been studied as a possible model for moments of $\zeta(1/2+it)$, which has been covered first in the work of Conrey and Gamburd \cite{conrey2006pseudomoments}, then Bondarenko, Heap and Seip \cite{bondarenko2015inequality} and Gerspach \cite{gerspach20}. However, the study of large fluctuation bounds for these weighted sums was first appeared in a paper of Aymone, Heap and Zhao \cite{aymone2021partial}. In a more recent paper, Aymone \cite{aymone24} noted that applying partial summation to Caich's upper bound, one has that
\begin{equation*}
    M_f(x) \ll (\log(x))^2.
\end{equation*}
After this, Hardy refined some of the methods used in their paper to find almost surely sharp bounds for $M_{f_{st}}(x)$ \cite{shardy23}. Hardy found that for all $\epsilon > 0$, one almost surely has that
\begin{equation*}
    M_{f_{st}}(x) \ll \exp\left((1+\epsilon)\sqrt{\log_2(x)\log_4(x)}\right) 
\end{equation*}
and for any $\epsilon>0$, one has
\begin{equation*}
    \limsup_{x \to \infty}\frac{|M_{f_{st}}(x)|}{\exp\left((1-\epsilon)\sqrt{\log_2(x)\log_4(x)}\right)} = \infty,
\end{equation*}
where $\log_k(x)$ denotes the $k$-fold iterated logarithm. As noted in his paper, these bounds resemble the law of the iterated logarithm very strongly. In this paper, we will combine the methods of Caich and Hardy to prove an upper bound for $M_f(x)$.
\begin{theorem}
    For any $\epsilon>0$ we have almost surely
    \begin{equation}
        M_f(x) \ll (\log\log(x))^{3/4+\epsilon}.
    \end{equation}\label{ubtheorem}
\end{theorem}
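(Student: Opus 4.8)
The plan is to follow the now-standard architecture for bounding almost sure fluctuations of such sums: dyadically decompose the sum $M_f(x)$ according to the size of the largest prime factor $P(n)$, control the high-$P(n)$ part by a moment/Rademacher-chaos argument and the low-$P(n)$ part by an Euler product comparison, and then take a union bound over a sparse sequence $x_j$ (say $x_j = \exp(\exp(j))$ or $x_j$ spaced so that $\log\log x_j$ increases geometrically) to upgrade pointwise bounds to an almost sure bound, handling the gaps by a crude monotonicity/variance estimate. Concretely, write $n = mp$ with $p = P(n) > \sqrt{x}$ and $m \leqslant \sqrt{x}$; then $\sum_{n \leqslant x, P(n) > \sqrt x} f(n)/\sqrt n = \sum_{m \leqslant \sqrt x} \frac{f(m)}{\sqrt m} \sum_{\sqrt x < p \leqslant x/m} \frac{f(p)}{\sqrt p}$, and by conditioning on the small primes this is a weighted sum of independent Rademacher variables $f(p)$; its conditional variance is $\sum_{m} \frac{f(m)^2}{m}\sum_{\sqrt x < p \leqslant x/m}\frac1p \ll \log\log x$ by Mertens, which already points to the $(\log\log x)^{1/2}$-type scale and, pushed through higher moments à la Caich, gives the exponent $21/80$ claimed in the abstract for this piece. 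For the complementary range $P(n) \leqslant \sqrt x$, iterate the decomposition: split off primes in dyadic-exponential windows $p \in (x^{1/2^{k+1}}, x^{1/2^k}]$ and in each window factor out the Euler-type product $\prod_{p \in I_k}(1 + f(p)/\sqrt p)$, whose typical size is governed by $\exp\bigl(\sum_{p \in I_k} f(p)/\sqrt p\bigr)$; here the Rademacher Euler product differs from the Steinhaus one because $\E \prod (1+f(p)/\sqrt p) = \prod(1 - 1/p)^{-1/2}$-type behaviour makes the chaos term dominant, which is exactly the point flagged in the abstract.

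The key steps, in order, would be: (1) establish a conditional high-moment bound for the high-$P(n)$ contribution — for each fixed $x$ and each even $2k$, bound $\E\bigl[\bigl|\sum_{n\leqslant x, P(n)>\sqrt x} f(n)/\sqrt n\bigr|^{2k} \mid (f(p))_{p \leqslant \sqrt x}\bigr]$ by $(2k)!!$ times the $k$-th power of the conditional variance, then optimize $k$ as a function of $x$ (taking $k \asymp \log\log x$) to get a bound of shape $(\log\log x)^{21/80+\epsilon}$ with exponentially small exceptional probability; (2) prove the analogous Euler-product / multiplicative-chaos estimate for the low-$P(n)$ part, where one compares $\sum_{n\leqslant x, P(n)\leqslant\sqrt x} f(n)/\sqrt n$ to $\prod_{p\leqslant \sqrt x}(1+f(p)/\sqrt p)$ and invokes (or re-proves in the Rademacher setting) the relevant moment estimates for this random Euler product — this is where Hardy's analysis of $M_{f_{st}}(x)$ enters, adapted to the Rademacher product $\prod_p(1-1/p)^{-1/2}(1+f(p)/\sqrt p)$; (3) combine the two exponents — the $21/80$ from the high part and the chaos exponent from the low part — the worst of which, together with the extra loss from the iteration over the $\log\log\log x$ many windows $I_k$, produces $61/80+\epsilon$; (4) run the Borel–Cantelli / sparse-sequence argument: choose $x_j$ so that $\sum_j \Prob(\text{bad at } x_j) < \infty$, then control $\max_{x_j \leqslant x \leqslant x_{j+1}}|M_f(x) - M_f(x_j)|$ by a second-moment (Doob-type) estimate over the short dyadic-in-$\log\log$ interval, which costs only $O(1)$ in the exponent and hence is absorbed into the $\epsilon$.

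I expect the main obstacle to be step (2): getting the multiplicative-chaos contribution down to the right exponent. The issue is that in the Rademacher case the "pseudomoment" obstruction is worse than for Steinhaus — the partial Euler product $\prod_{p\leqslant y}(1-1/p)^{-1/2}$ grows like $(\log y)^{1/2}$, so one must show that $\sum_{n\leqslant x}f(n)/\sqrt n$ does \emph{not} inherit this full growth but rather a $(\log\log x)^{O(1)}$ amount of it, which requires a careful low-moment (Harper-style $\E|\cdot|^q$ for $q<1$, or a conditional-expectation/martingale) argument to exhibit the better-than-square-root cancellation, and then matching this against the high-$P(n)$ bound without the exponents degrading. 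A secondary technical point is making the iteration over scales $x^{1/2^k}$ uniform: each stage introduces an independent chaos-type factor and the exponents must be summed geometrically rather than naively, so that the final constant is $61/80$ and not something larger; keeping the $\epsilon$-losses from the union bound, the moment optimization, and the gap-filling all genuinely $o(1)$ in the exponent is the bookkeeping that makes the argument work.
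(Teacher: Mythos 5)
Your outline gets the overall architecture right (split by largest prime factor, exploit the martingale/independence structure, reduce to sparse test points via Borel--Cantelli, condition on a variance bound and apply an Azuma--Hoeffding type inequality, and draw on random Euler product estimates), and that is indeed the paper's strategy following Caich, Mastrostefano, Hardy and Gerspach. But two of your central quantitative claims are wrong, and fixing them is precisely the technical content of the proof.

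First, the conditional variance heuristic does not work as stated. For $m\leqslant\sqrt x$ one has $\sum_{\sqrt x < p \leqslant x/m}1/p = \log\log(x/m) - \log\log\sqrt x + o(1) = O(1)$ by Mertens (not $\ll \log\log x$), so the quantity you wrote, $\sum_m \frac{1}{m}\sum_{\sqrt x < p\leqslant x/m}\frac1p$, is $\ll\log x$, not $\ll\log\log x$. The $(\log\log x)^{O(1)}$ scale does not come from Mertens at all; it comes from the fact that the true conditional variance $\sum_{p}\frac{1}{p}\bigl|\sum_{n\leqslant x/p,\,P(n)<p}f(n)/\sqrt n\bigr|^2$ involves partial sums that are typically vastly smaller than their mean $\asymp\log(x/p)$ (better-than-square-root cancellation). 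Exhibiting this requires Parseval's identity to convert the variance into $\int|F_{y_j}(1/2+\sigma+it)|^2/|\sigma+it|^2\,dt$, followed by a supermartingale/Doob argument in $j$ and Harper's low-moments, multiplicative-chaos and Gaussian ballot-problem machinery for random Euler products. This is the core of Sections \ref{Q_1_part1}--\ref{ce} and your step (2) only gestures at it.

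Second, the exponent $61/80$ does not arise as ``the worst of'' the high-part exponent $21/80$ and a separate chaos exponent plus an iteration loss over $\log\log\log x$ scales. The actual mechanism is that the Azuma--Hoeffding tail $\exp(-\epsilon^2/10T)$ has to beat the count $\exp(c\log_2 X_\ell)$ of test points $x_i\in(X_{\ell-1},X_\ell]$, so one needs $\epsilon^2/T\gtrsim\log_2 x$; the paper shows the total conditional variance $T$ is $\ll(\log_2 x)^{21/40+o(1)}$, forcing $\epsilon\asymp(\log_2 x)^{61/80+o(1)}$ since $2\cdot\tfrac{61}{80}=1+\tfrac{21}{40}$. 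For the restricted sum (Theorem \ref{sharp_restricted_ub}) the variance over the single window $p>\sqrt x$ is actually $\ll(\log_2 x)^{-19/40+o(1)}$, i.e.\ sub-constant due to the better-than-square-root cancellation, and the same balance gives $21/80$ since $2\cdot\tfrac{21}{80}=1-\tfrac{19}{40}$. The factor $(\log_2 x)^{1/2}$ separating the two, which you would attribute to ``iteration loss'', in fact reflects the loss of this negative power of $\log_2 x$ in the variance once one must union-bound over all $j$-windows. Your bookkeeping as written would not reproduce $61/80$.
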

When we restrict to integers with a single large prime factor (ie $P(n) > \sqrt{x}$ where $P(n)$ is the largest prime factor to divide $n$), we are able to get a conjecturally sharp upper bound (by adapting the work of Mastrostefano \cite{mastrostefano2022almost}).
\begin{theorem}
    For any $\epsilon > 0$ we have almost surely
    \begin{equation}
        \sum_{\substack{n \leqslant x \\ P(n) > \sqrt{x}}}\frac{f(n)}{\sqrt{n}} \ll (\log\log(x))^{1/4+\epsilon}.
    \end{equation} \label{sharp_restricted_ub}
\end{theorem}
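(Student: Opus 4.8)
The plan is to peel off the largest prime factor and then condition on the small primes. Recall that $f$ vanishes off the squarefree integers, and write each $n\leqslant x$ with $P(n)>\sqrt{x}$ as $n=pm$ with $p=P(n)$; then $p$ is prime in $(\sqrt{x},x]$ and $m\leqslant x/p<\sqrt{x}$, so $m<p$ and $f(n)=f(p)f(m)$. Hence
\[
\sum_{\substack{n\leqslant x\\ P(n)>\sqrt{x}}}\frac{f(n)}{\sqrt{n}}=\sum_{\sqrt{x}<p\leqslant x}\frac{f(p)}{\sqrt{p}}\,M_f(x/p)=:S(x).
\]
Conditionally on the $\sigma$-algebra $\mathcal{F}_{\sqrt{x}}$ generated by $(f(q))_{q\leqslant\sqrt{x}}$, the numbers $M_f(x/p)$ are constants while the $f(p)$ with $\sqrt{x}<p\leqslant x$ stay independent Rademacher variables independent of $\mathcal{F}_{\sqrt{x}}$, so $S(x)$ is conditionally a weighted Rademacher sum; Khintchine's inequality gives, for every integer $k\geqslant1$,
\[
\E\left[\,|S(x)|^{2k}\mid\mathcal{F}_{\sqrt{x}}\,\right]\leqslant(Ck)^{k}\,V(x)^{k},\qquad V(x):=\sum_{\sqrt{x}<p\leqslant x}\frac{M_f(x/p)^{2}}{p}.
\]

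Everything then hinges on a bound for $V(x)$. By the prime number theorem and the substitution $y=x/p$, $V(x)$ is comparable, with absolute constants, to the average of $M_f(y)^{2}$ against $\mathrm{d}y/y$ on $[1,\sqrt{x}]$, i.e. $V(x)\asymp\frac{1}{\log x}\int_{1}^{\sqrt{x}}M_f(y)^{2}\,\mathrm{d}y/y$. Merely inserting the bound $|M_f(y)|\ll(\log\log y)^{61/80+\epsilon}$ of Theorem~\ref{ubtheorem} yields $V(x)\ll(\log\log x)^{61/40+\epsilon}$, which through the moment inequality only reproduces the exponent $61/80$; the improvement to $21/80$ requires a gain of one full power of $\log\log x$, i.e. the almost sure bound
\[
V(x)\ll(\log\log x)^{21/40+\epsilon}.
\]
This is the crux, and it cannot come from moments of $V(x)$, since $\E[M_f(y)^{2}]\asymp\log y$ forces $\E[V(x)]\asymp\log x$, the expectation being inflated by rare events on which $M_f$ is atypically large. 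One must instead show that these large values of $M_f$ are confined to a logarithmically sparse set of scales and so wash out in the average. Following Mastrostefano's treatment of the analogous variance attached to $\sum_{n\leqslant x,\,P(n)>\sqrt{x}}f(n)$~\cite{mastrostefano2022almost}, I would relate $\frac{1}{\log x}\int_{1}^{\sqrt{x}}M_f(y)^{2}\,\mathrm{d}y/y$ to a normalised $L^{2}$-average, over a short range of $t$, of the partial Euler products $\prod_{p\leqslant x}(1+f(p)p^{-1/2-it})$ — the multiplicative chaos attached to $f$ — and then feed in the chaos and low-moment estimates underlying the proof of Theorem~\ref{ubtheorem}. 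It is exactly here that the size of the Rademacher Euler product controls the final exponent, and this step I expect to be \emph{the main obstacle}.

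Granting the bound on $V(x)$, the remainder is routine. Along a suitably sparse sequence $x_{j}$ (one may take $x_{j}=\exp(\exp(j))$, so $\log\log x_{j}=j$), split according to whether $V(x_{j})$ exceeds $(\log\log x_{j})^{21/40+\epsilon}$; Markov's inequality applied to the conditional moment bound then gives
\[
\Prob\!\left(|S(x_{j})|>(\log\log x_{j})^{21/80+\epsilon}\right)\ll\Prob\bigl(V(x_{j})>(\log\log x_{j})^{21/40+\epsilon}\bigr)+(Ck)^{k}(\log\log x_{j})^{-k\epsilon},
\]
and for a fixed integer $k>1/\epsilon$ both terms are summable in $j$ (the first being the key estimate above), so Borel--Cantelli yields $S(x_{j})\ll(\log\log x_{j})^{21/80+\epsilon}$ almost surely. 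A standard maximal/interpolation argument between consecutive $x_{j}$, handled by the same conditioning device as in the work of Caich~\cite{caich23}, Hardy~\cite{shardy23} and Mastrostefano~\cite{mastrostefano2022almost}, extends this to all $x$ and, after renaming $\epsilon$, gives Theorem~\ref{sharp_restricted_ub}.
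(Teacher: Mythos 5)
Your strategy --- peel off the largest prime factor, condition on $\mathcal{F}_{\sqrt{x}}$ so that $S(x)$ becomes a conditionally independent Rademacher sum, control the conditional variance $V(x)=\sum_{\sqrt{x}<p\leqslant x}M_f(x/p)^2/p$ by converting to an Euler-product integral via Parseval and invoking multiplicative chaos, then Khintchine/Azuma--Hoeffding and Borel--Cantelli --- is the same framework the paper adapts from Mastrostefano, and your diagnosis that an almost-sure bound on $V$ (unobtainable from its mean, which is $\asymp\log x$) is the crux is correct. However, there are two concrete gaps.

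First, your quantitative target for $V$ is off by a full power of $\log\log x$, and the discrepancy is tied to the interpolation step, which is not ``standard''. Between consecutive $x_j=\exp(\exp(j))$ the interval is so long that $\log\log x$ jumps by $1$; over $(x_j,x_{j+1}]$ both the admissible set $\{n: P(n)>\sqrt{x}\}$ and the range of summation change drastically, so $S_f(x)$ for $x$ in the middle of the interval is essentially a fresh object, not a small perturbation of $S_f(x_j)$. To control the supremum over such an interval one has to introduce, as the paper does, the dense test points $x_i=[e^{i^\gamma}]$ (to recover a Lau--Tenenbaum--Wu-type bound between neighbouring $x_i$) and then bound the supremum of $|S_f(x_{i-1})|$ over a whole sparse block $(X_{\ell-1},X_\ell]$ with $X_\ell=\exp(2^{\ell^K})$ using the martingale structure. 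That forces a union bound over $\approx 2^{\ell^K/\gamma}$ points, so the per-point Azuma--Hoeffding failure probability must be of order $\exp(-c\ell^K)$; with $\lambda\asymp(\log\log x)^{21/80+\epsilon}$ this requires $V\ll(\log\log x)^{-19/40+2\epsilon}$ on the good Euler-product event --- a bound that \emph{decays}, one power of $\log\log x$ stronger than your $(\log\log x)^{21/40+\epsilon}$. Your weaker target would survive a Borel--Cantelli only along the sparse $x_j$, which is the easy part, not the union bound that the interpolation actually demands.

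Second, the key estimate on $V$ is only flagged, not carried out. In the paper this is the chain in Section~\ref{mastro_variant}: discretize $\widetilde{V}(x_i)$ using the $\mathcal{X}$-smoothing of Section~\ref{sum_decomp}, pass by Proposition~\ref{HA1} to $\int|F_{x_i}(1/2+\sigma_\ell+it)|^2/|\sigma_\ell+it|^2\,dt$, verify that the normalised quantity $Y_{x_i}$ is a submartingale so Doob's maximal inequality applies, and show $\Prob(\overline{\Sigma_\ell})\ll\ell^{-K/200}$ by re-using the events $\mathcal{H}_\ell(T)$, $\mathcal{P}_\ell^{(1)},\ldots,\mathcal{P}_\ell^{(5)}$ and the Gaussian random-walk machinery from Sections~\ref{setup_heur}--\ref{ce}. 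None of this is automatic, and it is precisely here that the exponent $19/40$ --- and hence $21/80$ --- is produced.
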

We delay the proof of Theorem \ref{sharp_restricted_ub} until Section \ref{mastro_variant}. The first bound is not sharp, but the author conjectures the second bound is. The reason for the first upper bound not being sharp will be explained later in the paper. We will also prove an almost sure lower bound for $M_f(x)$ using an adapted version of Hardy's method.
\begin{theorem}
    There exist arbitrarily large $x$ such that
    \begin{equation}
        |M_f(x)| \gg \left(\log\log(x)\right)^{-1/2}. 
    \end{equation} \label{lbtheorem}
\end{theorem}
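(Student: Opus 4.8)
The plan is to follow Hardy's strategy for the Steinhaus case, adjusted as the abstract's remark suggests: the value $(\log\log x)^{-1/2}$ is exactly the typical size of the partial Rademacher Euler product $\prod_{p\leqslant z}\bigl(1+f(p)p^{-1/2}\bigr)$ when $\log z\asymp\log\log x$, so the target is to show that, along a suitable sequence of $x$'s, $M_f(x)$ inherits this size up to a constant with probability bounded below, after which a conditional Borel--Cantelli argument over that sequence upgrades ``with positive probability'' to ``for infinitely many $x$, almost surely''. Thus I would fix a rapidly increasing sequence $x_1<x_2<\cdots$, set $z_k:=(\log x_k)^{1/2}$ so that $\log z_k\asymp\log\log x_k$ and every squarefree $z_k$-smooth integer is automatically $\leqslant x_k$, and seek, for a filtration $(\mathcal{F}_k)$ adapted to the primes used by stage $k$, a uniform-in-$k$ lower bound $\Prob\!\bigl(|M_f(x_k)|\gg(\log\log x_k)^{-1/2}\mid\mathcal{F}_{k-1}\bigr)\geqslant c_0>0$, valid off an event whose probabilities are summable in $k$ (which by Borel--Cantelli is harmless). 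This forces $\sum_k\Prob(\,\cdot\mid\mathcal{F}_{k-1})=\infty$ almost surely, so $|M_f(x_k)|\gg(\log\log x_k)^{-1/2}$ infinitely often, which is the claim.

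For the conditional bound I would factor every squarefree $n\leqslant x_k$ as $n=ab$ with $a$ its $z_k$-smooth part and $b$ its $z_k$-rough part, so that
\[
M_f(x_k)=\sum_{\substack{\mu^2(a)=1\\ P(a)\leqslant z_k}}\frac{f(a)}{\sqrt a}\,N_k(x_k/a),\qquad N_k(t):=\sum_{\substack{b\leqslant t,\ \mu^2(b)=1\\ p\mid b\,\Rightarrow\,p>z_k}}\frac{f(b)}{\sqrt b}.
\]
Conditioning also on $f(p)$ for $p\leqslant z_k$, and using $\E N_k(t)=1$ for $t\geqslant1$ together with the fact that all squarefree $z_k$-smooth integers are $\leqslant x_k$, the conditional expectation of $M_f(x_k)$ equals the partial Euler product $\Pi_k:=\prod_{p\leqslant z_k}\bigl(1+f(p)p^{-1/2}\bigr)$. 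The size of this product is where the departure from the Steinhaus case lives: since $f(p)^2=1$ we have $\log\bigl(1+f(p)p^{-1/2}\bigr)=f(p)p^{-1/2}-\tfrac12p^{-1}+O(p^{-3/2})$, hence $\log\Pi_k=\sum_{p\leqslant z_k}f(p)p^{-1/2}-\tfrac12\log\log z_k+O(1)$, where $\sum_{p\leqslant z_k}f(p)p^{-1/2}$ is a sum of independent symmetric variables. Thus with probability $\geqslant\tfrac12$, namely when this sum is non-negative, one has $\Pi_k\gg(\log z_k)^{-1/2}\asymp(\log\log x_k)^{-1/2}$, so on this event the conditional mean of $M_f(x_k)$ already has the desired size; the $-\tfrac12\log\log z_k$ drift, which is absent in the Steinhaus computation, is exactly what caps it at a negative power of $\log\log x$.

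The main obstacle is to pass from this statement about the conditional mean to the corresponding statement about $|M_f(x_k)|$, i.e. to show that $M_f(x_k)$ does not collapse below $\Pi_k$ with conditional probability bounded below. A plain second-moment bound is useless: the conditional variance of $M_f(x_k)$ is of order $\log x_k$, far larger than $\Pi_k^2\asymp(\log\log x_k)^{-1}$, and even after conditioning on the small primes the conditional fourth moment still exceeds the square of the conditional second moment by a factor $\asymp\log x_k$, so a single application of Paley--Zygmund loses a factor $\log x_k$ and cannot survive a sparse subsequence. This excess is the multiplicative chaos contribution, and I would control it by a further recursive conditioning over the dyadic prime ranges $(z_k,z_k^2],(z_k^2,z_k^4],\ldots,(\,\cdot\,,x_k]$, following Harper's analysis of low moments of random multiplicative functions as reworked by Hardy: at each scale one shows that, off an event of small probability, the total mass of the associated random Euler-product chaos stays comparable to its typical value, so that the partial sums $N_k(\cdot)$ pick up only bounded multiplicative fluctuations, and a recursive second-moment (Paley--Zygmund) argument then yields $|M_f(x_k)|\gg\Pi_k\asymp(\log\log x_k)^{-1/2}$ with conditional probability $\geqslant c_0$. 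Feeding this into the conditional Borel--Cantelli lemma completes the proof. I expect the scale-by-scale chaos estimates to be the heaviest part, and since the argument merely extracts the Euler product's typical size rather than resonating it across many scales in the spirit of an iterated-logarithm bound, the exponent $-1/2$ is certainly not optimal.
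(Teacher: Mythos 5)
Your route is genuinely different from the paper's, although you have correctly identified the mechanism that sets the scale: the Rademacher-specific deterministic drift $-\tfrac12\sum_{p\leqslant z}p^{-1}\sim-\tfrac12\log\log z$ in $\log\prod_{p\leqslant z}\bigl(1+f(p)p^{-1/2}\bigr)$, which caps the Euler product at $(\log\log x)^{-1/2}$ and has no analogue in the Steinhaus case. The paper extracts this drift in a completely different way. It never conditions on small primes and never needs a conditional Borel--Cantelli step. Starting from the deterministic inequality $\max_{T_{k-1}<x\leqslant T_k}|M_f(x)|^2\geqslant\frac{1}{\log T_k}\int_{T_{k-1}}^{T_k}|M_f(t)|^2 t^{-1-2\sigma_k}\,dt$ with $\sigma_k=\log_2(T_k)/\log(T_k)$, it discards the head and tail of the integral via Chebyshev plus the \emph{first} Borel--Cantelli lemma, converts the completed integral by Proposition~\ref{HA1} into $\int|F_{T_k}(1/2+\sigma_k+it)|^2|\sigma_k+it|^{-2}\,dt$, restricts to the short interval $|t|\asymp 1/\log T_k$, and applies Jensen's inequality to pull the $t$-average inside $\log|F_{T_k}|^2$. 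Taylor-expanding the resulting exponential then isolates a deterministic contribution of size $\gg\log_2 T_k$ (the same $-\tfrac12 p^{-1}$ drift, reappearing through $-\cos(2t\log p)/p^{1+2\sigma_k}$ and the $f(p)^2=1$ identity), against the factor $(\log_2 T_k)^{-2}$ coming from $|\sigma_k+it|^{-2}$. The only probabilistic input is Proposition~\ref{chernoff} with first Borel--Cantelli to show the fluctuating part of the exponent is almost surely $\ll(\log_2 T_k)^{0.99}$. Because the reduction to the Euler product is a pointwise inequality valid for every realisation of $f$, there is no positive-conditional-probability event to engineer.

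The step you yourself flag as the ``main obstacle'' is where your plan is genuinely incomplete, and I do not believe the fix you sketch closes it. Conditional on $\mathcal F_{\leqslant z_k}$ the mean of $M_f(x_k)$ is $\Pi_k\asymp(\log\log x_k)^{-1/2}$, but the conditional variance is of order $\log x_k$, dwarfing $\Pi_k^2$ by a factor $\asymp\log x_k\cdot\log\log x_k$. What you therefore need is not control of moments but a genuine \emph{anti-concentration} estimate at the tiny scale $\Pi_k$: that the conditional law of $M_f(x_k)$ does not put mass $1-o(1)$ into the interval $[-c\Pi_k,c\Pi_k]$. Recursive multi-scale conditioning in the style of Harper controls the $2q$-th moments for $q<1$ and prevents the mass of the chaos from collapsing to zero at the scale $(\log x_k)^{1/2}(\log\log x_k)^{-1/4}$, but that is several orders of magnitude larger than $\Pi_k$ and tells you nothing about mass near $0$. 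Paley--Zygmund likewise only prevents collapse below a fixed fraction of $\sqrt{\E|M_f(x_k)|^2}$, not below $\Pi_k$. There is also a secondary issue with the outer conditional Borel--Cantelli step: the $\mathcal F_{k-1}$-measurable part of $M_f(x_k)$ (the contribution of primes $\leqslant x_{k-1}$) is not under your control once you condition on $\mathcal F_{k-1}$, so a \emph{uniform}-in-$\mathcal F_{k-1}$ lower bound on the conditional probability is not obviously available. Both difficulties are precisely what the paper avoids by not phrasing the problem in terms of a conditional mean at all.
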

It seems likely that one can prove a lower bound of
$$\gg (\log\log(x))^{1/4-\epsilon}$$
when restricting to integers whose largest prime factor is greater than $\sqrt{x}$ by appealing to the work of Harper with some changes \cite{harper23}, which would obtain sharp bounds in this case. \par
In Hardy's work, he showed that a main contribution to this sum came from
\begin{equation*}
    F_y^{st}(s) := \prod_{p \leqslant y}\left(1-\frac{f_{st}(p)}{p^s}\right)^{-1} \approx \exp\left(\sum_{p \leqslant y}\frac{f_{st}(p)}{p^s} + \sum_{p \leqslant y}\frac{(f_{st}(p))^2}{2p^{2s}}\right)    
\end{equation*}
when $s=1/2$ (in other words, the large fluctuations come from the size of this random Euler product. The Rademacher Euler product is given by
\begin{equation*}
    F_y(s) := \prod_{p \leqslant y}\left(1+\frac{f(p)}{p^s}\right) \approx \exp\left(\sum_{p \leqslant y}\frac{f(p)}{p^s} - \sum_{p \leqslant y}\frac{1}{2p^{2s}}\right).    
\end{equation*}
Geis and Hiary showed that for $s = \sigma+it$
\begin{equation}
    F(\sigma) = \prod_p \left(1+\frac{f(p)}{p^\sigma}\right) \to 0 \label{unshifted_euler}
\end{equation}
as $\sigma \to 1/2^+$ \cite{geishiary23} (in this paper we inspect this convergence in a more quantitative manner). If the heuristic in the Steinhaus case persisted, then this would suggest large fluctuations of the size
\begin{equation}
M_f(x) \overset{?}{\approx} \frac{\exp(\sqrt{\log_2(x)\log_4(x)})}{\sqrt{\log(x)}}. \label{bad_predict}
\end{equation}
Our upper bound for large fluctuations is significantly larger than this. One way to explain this is to consider $s$ away from the real line (introducing the imaginary part of $s$) then the contribution from this term decays quite rapidly (it acts a lot like the covariance of a log-correlated field). Such a prediction in equation (\ref{bad_predict}) would assume that the largest contribution is coming from the Rademacher Euler product when $t=0$, which we will find it does not, and it is the multiplicative chaos contribution which is dominant. For a more detailed discussion of this, see the very recent preprint of Hardy comparing the differences in distribution between the Rademacher and Steinhaus cases \cite{hardy2025distribution}, where this deterministic term considerably complicates the covariance computations. This is also the first result the author is aware of where the Rademacher model gives more cancellation than the Steinhaus model (for example the moments of partial sums of unweighted Rademacher random multiplicative functions are significantly larger than those in the Steinhaus case when considering moments of order greater than $2\phi$, where $\phi$ is the golden ratio).\par
The key objects of study are Euler products like equation (\ref{unshifted_euler}) and integrals of the form
\begin{equation}
    \int_{N-1/2}^{N+1/2}|F_y(1/2+\sigma+it)|^2dt \label{rep}
\end{equation}
for $\sigma > 0$ and $N \in \mathbb{Z}$. Consequently, we will draw upon some of the multiplicative chaos results derived by Harper (in particular Multiplicative Chaos Result 3 in \cite{harper23} and Proposition 6 in \cite{aharper201}). These manipulations explain why the upper bound in Theorem \ref{ubtheorem} are so much larger than those that would be predicted by the Law of Iterated Logarithm in this case, which is further remarked upon by Harper in his high moments paper for sums of random multiplicative functions \cite{harper202}. In particular, Harper comments on a transition in the behaviour of the Rademacher Euler product according to the value of $t$, noting that when considering $t=0$, then equation (\ref{rep}) resembles averaging over an orthogonal family of Dirichlet characters, whereas when $t \approx 1$, equation (\ref{rep}) is like averaging over a unitary family of Dirichlet characters.\par
The proof of Theorem \ref{ubtheorem} highlights the fact that the multiplicative chaos features of $M_f(x)$ are responsible for the size of its large fluctuations. Given how well the methods of Caich's proof in the unweighted case can be adapted to understanding $M_f(x)$, then the author believes that any improvements on this method could be extended to further work on $M_f(x)$ (as well as improvements to the work in Section \ref{ce} concerning the expectations of integrals of Rademacher Euler products). As for the lower bound, the method we use exploits the fact there is a deterministic contribution to $|M_f(x)|^2$. To improve this, it is likely we will need a new method to prove the lower bound. One reason is that it is very difficult to separate the smooth contribution in the weighted case, whereas it is discarded rather simply when we have no extra weight. We will encounter this problem even in this paper in the proof of Lemma \ref{PB_0k}. \par
Given the behaviour of the lower bound in the unweighted case and the upper bound we obtain in Theorem \ref{sharp_restricted_ub} (among other things), the author would conjecture that for any $\epsilon>0$, one has almost surely
$$M_f(x) \ll (\log\log(x))^{1/4+\epsilon}$$
and for any $V(x) \to \infty$, one can find arbitrarily large values of $x$ such that
$$|M_f(x)| \geqslant \frac{(\log\log(x))^{1/4}}{V(x)}.$$
However, this will not be an easy task, particularly in the lower bound since one will have to find a way to lower bound or discard the part of the sum where the $n$ do not have a large prime factor (which as previously mentioned is rather difficult given the weight factor we have). Proving a lower bound with the restriction $P(n) > \sqrt{x}$ should be doable by adapting the Harper's large fluctuations result \cite{harper23}. Regardless, it appears that $M_f(x)$ should diverge, with further evidence being provided by Aymone where he shows that $M_f(x)$ has infinitely many sign changes \cite{aymone24}. If one finds a divergent lower bound, then one could use it to prove a quantitative version of the result of Aymone. Another approach to this problem would be to implement the methods detailed in a recent paper of Klurman--Munsch--Lamzouri on sign changes in partial sums \cite{klurman2024sign}.\par
Another motivation to study $M_f(x)$ is its connection to forming large fluctuation bounds for unweighted sums of completely multiplicative Rademacher functions, which will be denoted by $f^*(n)$. These are defined by setting $f^*(p)$ as independent Rademacher random variables on primes $p$ and defining $f^*(n)$ completely multiplicatively. We then see that
\begin{equation}
    \sum_{n \leqslant x}f^*(n) = \sum_{ab^2 \leqslant x}f^*(ab^2) = \sum_{a \leqslant x}f(a) \left(\sum_{b^2 \leqslant x/a} 1\right) = \sqrt{x}\sum_{a \leqslant x}\frac{f(a)}{\sqrt{a}} - \sum_{a \leqslant x}\left\{\sqrt{\frac{x}{a}}\right\}f(a). \label{heur}
\end{equation}
and for a similar weighted version
\begin{equation}
    \sum_{n \leqslant x}\frac{f^*(n)}{\sqrt{n}} = \sum_{ab^2 \leqslant x}\frac{f^*(ab^2)}{b\sqrt{a}} = \sum_{b \leqslant \sqrt{x}}\frac{1}{b} \left(\sum_{a \leqslant x/b^2} \frac{f(a)}{\sqrt{a}}\right)\label{heur2}
\end{equation}
where $f(n) = f^*(n)$ when $n$ is squarefree and is $0$ otherwise. In particular, we can see that understanding the large fluctuations of $\sum_{n \leqslant x}f^*(n)$ is intertwined with understanding the large fluctuations of $M_f(x)$. The author does not know how one would attempt to handle the second sum because $\{\sqrt{x/n}\}$ is known to have particularly difficult behaviour to control, on top of the fact it would break the multiplicative structure which current proofs use to study $\sum_{n \leqslant x}f(n)$. While not as significant to understand, equation (\ref{heur2}) is enough to provide an almost sure upper bound when combined with Theorem \ref{ubtheorem}.
\begin{corollary}
    Let $\epsilon>0$. Then we have almost surely,
    $$\sum_{n \leqslant x}\frac{f^*(n)}{\sqrt{n}} \ll \log(x)(\log\log(x))^{3/4+\epsilon}.$$ \label{weighted_CR}
\end{corollary}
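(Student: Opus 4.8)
The plan is to feed Theorem \ref{ubtheorem} into the identity (\ref{heur2}), which expresses $\sum_{n \leqslant x} f^*(n)/\sqrt{n}$ as the $1/b$-weighted sum of the partial sums $M_f(x/b^2)$ over $b \leqslant \sqrt{x}$. I would work on the almost-sure event on which Theorem \ref{ubtheorem} holds and fix $\epsilon > 0$; then there is a threshold $x_0 = x_0(f,\epsilon) \geqslant 3$ with $|M_f(y)| \ll (\log\log y)^{61/80+\epsilon}$ for all $y \geqslant x_0$. The natural move is to split the sum over $b$ at the point where the argument $x/b^2$ of $M_f$ drops below $x_0$, namely at $b = \sqrt{x/x_0}$.

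For the main range $b \leqslant \sqrt{x/x_0}$ one has $x/b^2 \geqslant x_0$, so Theorem \ref{ubtheorem} applies and, since $\log\log(x/b^2) \leqslant \log\log x$, gives $|M_f(x/b^2)| \ll (\log\log x)^{61/80+\epsilon}$ uniformly in $b$; combined with $\sum_{b \leqslant \sqrt{x/x_0}} 1/b \ll \log x$ this contributes $\ll \log(x)(\log\log x)^{61/80+\epsilon}$, which is exactly the asserted bound. For the short tail $\sqrt{x/x_0} < b \leqslant \sqrt{x}$, Theorem \ref{ubtheorem} is unavailable, so I would fall back on the deterministic estimate $|M_f(y)| \leqslant \sum_{a \leqslant y} a^{-1/2} \ll \sqrt{y}$, which here is $\ll \sqrt{x_0} = O_f(1)$; since the length of that range satisfies $\sum_{\sqrt{x/x_0} < b \leqslant \sqrt{x}} 1/b \ll \log \sqrt{x_0} = O_f(1)$, the entire tail contributes only $O_f(1)$ and is swallowed by the main term. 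Adding the two ranges yields Corollary \ref{weighted_CR}.

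I do not expect any serious obstacle here; the single delicate point is passing from the asymptotic-in-$x$ form of Theorem \ref{ubtheorem} to an estimate applicable simultaneously to all the arguments $x/b^2$ with $b \leqslant \sqrt{x}$, which is precisely what dictates the split above and the use of the crude bound on the piece where $x/b^2$ is bounded in terms of the realisation of $f$. As usual all implied constants may depend on $f$ and on $\epsilon$, which is harmless for an almost-sure conclusion.
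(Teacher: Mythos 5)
Your proposal is essentially the paper's proof: plug Theorem \ref{ubtheorem} into the identity (\ref{heur2}), bound the inner sum uniformly by $(\log\log x)^{61/80+\epsilon}$, and sum $1/b$ over $b \leqslant \sqrt{x}$ to pick up the $\log x$ factor. The only difference is that you make explicit (via the split at $b = \sqrt{x/x_0}$ and the crude $|M_f(y)| \ll \sqrt{y}$ bound on the tail) the point that the almost-sure estimate from Theorem \ref{ubtheorem} only kicks in once the argument $x/b^2$ exceeds an $f$-dependent threshold; the paper leaves this detail implicit, but your handling is correct and does not change the substance of the argument.
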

\begin{proof}
    In view of equation (\ref{heur2}), it suffices to simply input the bound we found in Theorem \ref{ubtheorem}. Then we have that by the triangle inequality
    $$\left|\sum_{n \leqslant x}\frac{f^*(n)}{\sqrt{n}}\right| \leqslant \sum_{b \leqslant \sqrt{x}}\frac{1}{b} \left|\sum_{a \leqslant x/b^2} \frac{f(a)}{\sqrt{a}}\right|.$$
    By Theorem \ref{ubtheorem}, the inner sum is almost surely $\ll (\log\log(x/b^2))^{3/4+\epsilon} \ll (\log\log(x))^{3/4+\epsilon}$. Summing over $b$ yields the corollary. 
\end{proof}
This bound certainly looks close to what one might expect for such a quantity since from the contribution of the squares less than $x$ alone is $\frac{1}{2}\log(x)$ (and it seems unlikely that the randomness would conspire to reduce this below $\log(x)$ order of magnitude). A lower bound of size $c(\log(x))^{1/2}$ for some $c>0$ can be proved as well using some tools from Gaussian processes as well (by following the proof of the lower bound in \cite{shardy23}). This being significantly larger than Theorem \ref{ubtheorem} is not surprising since $f^*(n)$ has deterministic contributions from the squares, as well as the random behaviour from the non-squares. Recently, Angelo proved that this sum sign changes infinitely often too, answering a question of Aymone \cite{angelo2024sign} (quantitative results of this kind could also be found using ideas from \cite{klurman2024sign}). One can apply partial summation to this, and find that for any $\epsilon>0$, then almost surely
$$\sum_{n \leqslant x}f^*(n) \ll \sqrt{x}(\log(x))^{1+\epsilon}.$$
In view of equation (\ref{heur}), hopefully one might be able to reduce the power of $\log(x)$. \par
In upcoming work of the author, he shows that for $q \in (0,1/2)$, then
$$\mathbb{E}\left|\sum_{2 \leq n \leq x}\frac{f(n)}{\sqrt{n}}\right|^{2q} \asymp (\log\log(x))^{-q/2},$$
which provides further evidence for the conjecture on the size of the large fluctuations of the weighted Rademacher sum. 
An application of all of these bounds would be for calculating the moments of sums of quadratic Dirichlet characters. Harper and Hussain successfully applied this principle by relating character sums to sums of Steinhaus random multiplicative functions in \cite{harper2023typicalsizecharacterzeta} and \cite{hussain22} (these were in rather different contexts). Later, Hussain and Lamzouri extended this principle to sums of Legendre symbols in \cite{hussain2023limiting}, where the limiting object featured the completely multiplicative Rademacher random multiplicative functions as well. These ideas also manifested in the work of Klurman--Lamzouri--Munsch when considering Fekete polynomials \cite{klurman2023l_q}.
\subsection{Ideas for Proofs}
The proof of Theorem \ref{ubtheorem} follows the work of Caich on the best known almost sure upper bound \cite{caich23}. The spirit of the proof can be traced back to Hal\'{a}sz \cite{halasz83}. Since we use the first Borel--Cantelli lemma to do this, it is natural to reduce this problem to proving our bounds on a suitable set of test points $(x_i)$ given that $|M_f(x_i)-M_f(x_{i-1})|$ does not grow too much. The later was proved by Lau--Tenenbaum--Wu in their paper \cite{WTL13}. Since it is now sufficient to understand only the size of $M_f(x_i)$, one can add some splitting according to the largest prime dividing $n$. For a suitable strictly increasing sequence $(y_j)_{0 \leqslant j \leqslant J}$, we have the splitting
\begin{equation}
    M_f(x_i) = \sum_{\substack{n \leqslant x_i\\P(n) \leqslant y_0}}\frac{f(n)}{\sqrt{n}} + \sum_{y_0 < p \leqslant y_J}\frac{f(p)}{\sqrt{p}}\sum_{\substack{n \leqslant x_i/p \\ P(n) < p}}\frac{f(n)}{\sqrt{n}} \label{prime_split}
\end{equation}
where $J$ is the smallest $j$ such that $y_j > x_i$). Caich then further splits the second term on the right hand side into various cases including splitting on the size of $j$ and over different prime ranges (this involves some of the deductions which appear in Harper's low moments paper \cite{aharper201}). The purpose of splitting the primes further than what is done in Lau--Tenenbaum--Wu is that unless the primes are suitably large in terms of $x_i$, then the variance of
\begin{equation}\sum_{y_{j-1} < p \leqslant y_j}\frac{f(p)}{\sqrt{p}}\sum_{\substack{n \leqslant x_i/p \\ P(n) < p}}\frac{f(n)}{\sqrt{n}} \label{prime_increment}
\end{equation}
is quite a bit smaller than what is contributing to the size of the large fluctuations (this idea is covered in a lot more detail in \cite{harper23}). Consequently, we see that it is only a small portion of the $j$ in equation (\ref{prime_split}) where the quantity in equation (\ref{prime_increment}) is large (in particular when $y_j$ is large enough in terms of $x_i$). The other important observation is that equation (\ref{prime_split}) is a sum of martingale differences which was first utilised by Mastrostefano \cite{mastrostefano2022almost}. Both proofs use the same sparse set of test points $(X_\ell)_{\ell \geqslant 1}$ where one takes the supremum of the $x_i$ over (which is why we need to the stronger martingale techniques used in this proof). This is significant since it allows Caich to use a very strong upper bound derived from the Azuma--Hoeffding inequality which combined with the better low moments estimates, a suitable set of sparser test points to implement this saving and other things allow him to save $(\log\log(x))^{5/4}$ compared to the previous work of Basquin and Lau--Tenenbaum--Wu \cite{B17}.\par
The application of Caich and Mastrostefano's ideas is particularly powerful in our case of $M_f(x)$ since only having to rely on the variance
$$\sum_{y_0 < p \leqslant y_0}\frac{1}{p}\E\left(\left|\sum_{\substack{n \leqslant x_i/p \\ P(n) < p}}\frac{f(n)}{\sqrt{n}}\right|^2\right)$$
means we do not have to rely on high moment bounds as much. \par
Hardy's work \cite{shardy23} is based off Lau--Tenenbaum--Wu as well as some other results involving strong bounds on the random Euler products of Steinhaus multiplicative functions and their integrals. It relies on manipulating the weighted sums into integrals of random Euler products, and splitting the range of integration to bound each contribution precisely. Hardy uses four sets of test points to average over in his paper to gain a very strong bound on the magnitude of the Steinhaus Euler product. In this paper, we implement a similar approach by following the ideas of Gerspach on determining the moments of weighted Steinhaus sums \cite{gerspach20}. One of the new inputs is from an application of the Gaussian random walk results seen in \cite{aharper201} to bound one of the integrals in a sharper manner (similar ideas can be seen in \cite{arguin2024fyodorov}). This is why we can get close to the exponents obtained by Caich and Mastrostefano. Most of the heavy probability are treated as a blackbox in this proof. Note that we only need two sets of test points to prove our result since we can afford less precision on bounding the Rademacher Euler product compared to the work of Hardy.\par
With the results we use to prove Theorem \ref{ubtheorem}, it is not very difficult to return to Theorem \ref{sharp_restricted_ub}. This comes down to mainly cosmetic tweaks to the work of Mastrostefano. The saving compared to Theorem \ref{ubtheorem} comes from only having to deal with one range of prime factors, which saves an application of the union bound (and generally makes the proof much simpler since one only has to consider two sets of test points rather than the three we use for the main result).\par
As for the lower bound, this is a small adaptation from Hardy's lower bound for $M_{f_{st}}(x)$ due to the differences between the Rademacher and Steinhaus Euler products. The Rademacher Euler product does not enjoy the translation invariance of the Steinhaus one, so is far more fiddly to interact with. However, the main idea of the proof remains the same: find a way to translate between $|M_f(t)|^2$ and an integral involving its Euler product without incurring too much loss, and continue our analysis in this setting. After several manipulations, one including Jensen's inequality, we find that there is a deterministic contribution in one of the lower bounds for the Euler product of size $\log\log(x)$ (there are no non-random contributions found in the Steinhaus setting by contrast). We then upper bound the random contribution, and conclude from there.
\subsection{Organisation of the paper and notation}
The proof of the upper bound takes the majority of the length of the paper and involves many reductions, so we detail them here. We begin proving the upper bound in Section \ref{upper_beginning}, where we proceed with a rigourous version of the splitting argument highlighted in equation (\ref{prime_split}) and handle the contribution of the integers which are $y_{j*}$ smooth. This yields five ""bad"" events and one ""good"" event that we have to manipulate. Four of the ""bad"" events are handled in Section \ref{easy_bounds}. The final ""bad"" event is by far the most complicated to handle, and this is the topic of Sections \ref{Q_1_part1} and \ref{ce} (since there are further subevents to deal with). Finally we handle the one ""good"" event and finish the proof of Theorem \ref{ubtheorem} in Section \ref{good_good}. We complete the proof of Theorem \ref{lbtheorem} in Section \ref{lower_bound_all} and finally the proof of Theorem \ref{sharp_restricted_ub} in Section \ref{mastro_variant}. We collect all of the results we need in Section \ref{prelims}. The limitations of the methods used will be briefly discussed at the end of Sections \ref{Q_good} and \ref{setup_heur}. From this point, we will always refer to the $k$-fold iterated logarithm as $\log_k(x)$ for tidiness and employ Vinogradov notation throughout the paper. We will use $P(n)$ to denote the largest prime which divides $n$ as well.
\subsection{Acknowledgements}
The author would like to thank his supervisors Joseph Najnudel and Oleksiy Klurman for carefully reading through previous versions of the paper. Maxim Gerspach, Seth Hardy, Rachid Caich and Besfort Shala helped with various
interesting discussions, suggestions and encouragement relating to this problem. The
author thanks Ofir Gorodetsky and Adam Harper for identifying errors in a previous
version of the paper and for their helpful comments. This work was supported by the
Heilbronn Institute for Mathematical Research.
\section{Preliminary results} \label{prelims}
For this upper bound, we will use much of the notation and framework established in Caich's proof of the unweighted Rademacher case. The first of these is the rough Rademacher hypercontractive inequality. This was first proved by Bonami \cite{bonami70} in a far more general setting than we present here, and reproved by Hal\'{a}sz \cite{halasz83}.
\begin{proposition}
    Let $k \in \mathbb{N}$ and $f$ be a Rademacher multiplicative function. Then for $(a_n) \subset \mathbb{C}$ where $a_n \neq 0$ for only finitely many $n$, then
    \begin{equation*}
        \mathbb{E}\left|\sum_{n}a_nf(n)\right|^{2k} \leqslant \left(\sum_n |a_n|^2\tau_{2k-1}(n)\right)^k 
    \end{equation*}
    where $\tau_{2q-1}(n) = \#\{(m_1,...,m_{2q-1}): m_1...m_{2q-1} = n\}$ is the $(2q-1)$-fold iterated divisor function. \label{prop_hyper}
\end{proposition}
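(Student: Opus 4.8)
\emph{Proof idea.} The plan is to recognise $\sum_{n}a_{n}f(n)$ as a multilinear polynomial in the independent Rademacher variables $\epsilon_{p}:=f(p)$ and to apply hypercontractivity. Since $f(n)=0$ unless $n$ is squarefree, the left-hand side $\E|\sum_{n}a_{n}f(n)|^{2k}$ depends only on the $a_{n}$ with $n$ squarefree, while zeroing out the remaining coefficients does not increase $\sum_{n}|a_{n}|^{2}\tau_{2k-1}(n)$; so I would first reduce to the case $a_{n}=0$ for every non-squarefree $n$. For such a sequence, $\sum_{n}a_{n}f(n)=\sum_{n}a_{n}\prod_{p\mid n}\epsilon_{p}=:g$ is a multilinear form whose Fourier coefficient on the squarefree monomial indexed by a finite set of primes $S$ is $\hat g(S)=a_{n}$ if $S=\{p:p\mid n\}$ and $0$ otherwise. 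The Bonami--Beckner hypercontractive inequality — the standard reformulation of the contraction bound $\|T_{1/\sqrt{2k-1}}h\|_{2k}\leqslant\|h\|_{2}$, applied to the polynomial $h$ with Fourier coefficients $(2k-1)^{|S|/2}\hat g(S)$ — then gives
\[
\E|g|^{2k}\ \leqslant\ \Bigl(\sum_{S}(2k-1)^{|S|}\,|\hat g(S)|^{2}\Bigr)^{k}\ =\ \Bigl(\sum_{n}(2k-1)^{\omega(n)}\,|a_{n}|^{2}\Bigr)^{k}.
\]
Since $\tau_{2k-1}$ is multiplicative with $\tau_{2k-1}(p)=2k-1$, one has $(2k-1)^{\omega(n)}=\tau_{2k-1}(n)$ on the (squarefree) support of $(a_{n})$, and restoring the deleted indices only enlarges the bound; that is exactly the claimed inequality.

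If one prefers not to quote hypercontractivity as a black box (this is essentially Hal\'asz's reproof), I would instead induct on the number of primes dividing the finite support of $(a_{n})$, again assuming $a_{n}$ supported on squarefrees. In the inductive step, fix a prime $p_{0}$ in this support and write $\sum_{n}a_{n}f(n)=\epsilon_{p_{0}}A+B$ with $A=\sum_{p_{0}\nmid m}a_{p_{0}m}f(m)$ and $B=\sum_{p_{0}\nmid n}a_{n}f(n)$, both functions of $(\epsilon_{p})_{p\neq p_{0}}$ only (the indices with $p_{0}^{2}\mid n$ drop out since $f$ vanishes there). Averaging first over $\epsilon_{p_{0}}$ and invoking the elementary two-point inequality $\tfrac12\bigl(|z+w|^{2k}+|z-w|^{2k}\bigr)\leqslant\bigl(|z|^{2}+(2k-1)|w|^{2}\bigr)^{k}$ (which, via $|z\pm w|^{2}=|z|^{2}+|w|^{2}\pm 2\Re(z\bar w)$ and monotonicity in $|\Re(z\bar w)|$, reduces to the real estimate $\binom{2k}{2j}\leqslant\binom{k}{j}(2k-1)^{j}$), and then Minkowski's inequality in $L^{k}$ to bring the remaining expectation inside, would yield
\[
\E\,\Bigl|\sum_{n}a_{n}f(n)\Bigr|^{2k}\ \leqslant\ \Bigl(\bigl(\E|B|^{2k}\bigr)^{1/k}+(2k-1)\bigl(\E|A|^{2k}\bigr)^{1/k}\Bigr)^{k}.
\]
Applying the inductive hypothesis to $A$ and to $B$ and using $(2k-1)\,\tau_{2k-1}(m)=\tau_{2k-1}(p_{0}m)$ for $p_{0}\nmid m$, the bracket collapses to $\sum_{p_{0}^{2}\nmid n}|a_{n}|^{2}\tau_{2k-1}(n)\leqslant\sum_{n}|a_{n}|^{2}\tau_{2k-1}(n)$, closing the induction; the base case (empty set of primes) is just $\E|a_{1}|^{2k}=(|a_{1}|^{2})^{k}$.

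I do not foresee a serious obstacle: essentially all the content sits in the hypercontractive input, and the one point to verify carefully will be that the degree-grading weight $(2k-1)^{|S|}$ it produces matches $\tau_{2k-1}$ on squarefree integers (and is dominated by $\tau_{2k-1}$ in general), so that the arithmetic factor in the statement is precisely the one the inequality hands us. In the self-contained version the ingredient that needs real care is the two-point inequality $\binom{2k}{2j}\leqslant\binom{k}{j}(2k-1)^{j}$ — the combinatorial core of hypercontractivity — together with tracking $\tau_{2k-1}(p_{0}m)=(2k-1)\tau_{2k-1}(m)$ through the peeling so that the spare factor $2k-1$ generated at each step is absorbed exactly.
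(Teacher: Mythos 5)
The paper does not prove Proposition \ref{prop_hyper} itself; it cites it as due to Bonami \cite{bonami70} and as reproved by Hal\'asz \cite{halasz83}, and both of your arguments correctly reconstruct exactly those two proofs. Your hypercontractive route is the Bonami--Beckner argument (apply $\|T_{1/\sqrt{2k-1}}h\|_{2k}\leqslant\|h\|_{2}$ to the degree-reweighted polynomial $h$, then identify $(2k-1)^{\omega(n)}=\tau_{2k-1}(n)$ on squarefree support), and your inductive peeling with the two-point inequality $\tfrac12(|z+w|^{2k}+|z-w|^{2k})\leqslant(|z|^{2}+(2k-1)|w|^{2})^{k}$ plus Minkowski in $L^{k}$ is precisely Hal\'asz's elementary derivation. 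The only ingredients you flagged as needing care do hold: the binomial inequality $\binom{2k}{2j}\leqslant\binom{k}{j}(2k-1)^{j}$ follows since the ratio of consecutive terms is $(2k-2j-1)/((2j+1)(2k-1))\leqslant 1$ so the quotient is nonincreasing in $j$ starting from $1$; the reduction of the complex two-point inequality to the real one follows since the left side, as a function of $\Re(z\bar w)$ with $|z|,|w|$ fixed, is even and nondecreasing in $|\Re(z\bar w)|$; and the bookkeeping $\tau_{2k-1}(p_{0}m)=(2k-1)\tau_{2k-1}(m)$ for $p_{0}\nmid m$ closes the induction exactly as you say. No gaps.
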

The key number theory result we use is a Parseval identity for Dirichlet series, allowing us to move between partial sums of coefficients and their corresponding Euler product.
\begin{proposition} (Equation (5.26) in Section 5.1 of \cite{montgomery07}) Let $(a_n)_{n=1}^\infty$ be a sequence of complex numbers and $A(s) = \sum_{n=1}^\infty \frac{a_n}{n^s}$ be its associated Dirichlet series, with associated abscissa of convergence $\sigma_c$. Then for any $\sigma > \max\{\sigma_c,0\}$, we have that
\begin{equation*}
    \int_0^\infty \frac{\left|\sum_{n \leqslant x} a_n\right|^2}{x^{1+2\sigma}} dx = \frac{1}{2\pi}\int_{-\infty}^\infty \left|\frac{A(\sigma+it)}{\sigma+it}\right|^2 dt. 
\end{equation*} \label{HA1}
\end{proposition}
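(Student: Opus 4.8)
The plan is to reduce the identity to the Plancherel theorem for the Fourier transform on $\mathbb{R}$ via a logarithmic change of variables. Write $S(x) := \sum_{n \leqslant x} a_n$, and observe that $S(x) = 0$ for $x < 1$. Substituting $x = e^u$ in the left-hand integral turns it into $\int_{-\infty}^\infty |S(e^u)\,e^{-\sigma u}|^2\,du$, i.e. the squared $L^2(\mathbb{R})$-norm of the function $g(u) := S(e^u)\,e^{-\sigma u}$.

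First I would verify that $g \in L^1(\mathbb{R}) \cap L^2(\mathbb{R})$, so that Plancherel applies and its Fourier transform is given by a convergent integral. Since $S(x) = 0$ for $x < 1$, the function $g$ is supported on $u \geqslant 0$. For decay as $u \to +\infty$, I would invoke the elementary fact that convergence of the Dirichlet series for $\Re(s) > \sigma_c$ forces $S(x) \ll_{\sigma'} x^{\sigma'}$ for every $\sigma' > \max\{\sigma_c,0\}$; this follows by Abel summation from the convergence of $\sum_n a_n n^{-s_0}$ at a point $s_0$ with $\sigma_c < \Re(s_0) < \sigma$. Choosing $\sigma' < \sigma$ gives $g(u) \ll e^{(\sigma'-\sigma)u}$, which decays exponentially, so indeed $g \in L^1 \cap L^2$.

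Next I would compute the Fourier transform $\widehat{g}(t) = \int_{-\infty}^\infty g(u)\,e^{-iut}\,du$. Undoing the substitution ($u = \log x$, $du = dx/x$) yields $\widehat{g}(t) = \int_0^\infty S(x)\,x^{-(\sigma+it)-1}\,dx$. Writing $S(x) = \sum_n a_n\,\mathbf{1}_{n \leqslant x}$ and interchanging sum and integral — legitimate by absolute convergence once $\Re(\sigma+it) = \sigma > \max\{\sigma_c,0\}$, using the same bound $S(x) \ll x^{\sigma'}$ together with $\sum_n |a_n| n^{-\sigma}$-type control near the tail — each term contributes $a_n \int_n^\infty x^{-s-1}\,dx = a_n n^{-s}/s$ with $s = \sigma+it$. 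Summing over $n$ gives $\widehat{g}(t) = A(\sigma+it)/(\sigma+it)$.

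Finally, Plancherel's theorem in the form $\int_{-\infty}^\infty |g(u)|^2\,du = \tfrac{1}{2\pi}\int_{-\infty}^\infty |\widehat{g}(t)|^2\,dt$ delivers the claimed identity. The only genuine obstacle is the bookkeeping in the first two steps: establishing enough decay of $S(x)$ to place $g$ in $L^1 \cap L^2$ and to justify the term-by-term integration. Both are routine once one has the standard estimate for partial sums in terms of the abscissa of convergence; the analytic content of the statement is just Plancherel.
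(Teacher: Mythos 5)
The paper does not prove this proposition; it is cited verbatim as Equation (5.26) in Section 5.1 of Montgomery--Vaughan, so there is no in-paper argument to compare against. Your proof is the standard textbook derivation of that identity — logarithmic change of variables, recognize $A(\sigma+it)/(\sigma+it)$ as the Fourier transform of $S(e^u)e^{-\sigma u}$, apply Plancherel — and it is correct in outline.

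One small point of imprecision: you invoke ``absolute convergence'' when interchanging $\sum_n$ and $\int_0^\infty$, but the Dirichlet series $\sum a_n n^{-s}$ need not converge absolutely on the line $\Re(s)=\sigma$ (absolute convergence is only guaranteed for $\sigma > \sigma_a$, which may exceed $\sigma_c + 1$). The clean way to justify the interchange is the truncation argument you allude to at the end of that paragraph: for integer $N$,
$$\int_1^N S(x)\,x^{-s-1}\,dx = \sum_{n\le N} a_n \frac{n^{-s}-N^{-s}}{s} = \frac{1}{s}\sum_{n\le N}\frac{a_n}{n^s} - \frac{S(N)N^{-s}}{s},$$
and then let $N\to\infty$; the first term tends to $A(s)/s$ by (conditional) convergence of the Dirichlet series at $s$, and the second tends to zero because $|S(N)N^{-s}|\ll N^{\sigma'-\sigma}\to 0$ with your $\sigma' < \sigma$. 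The tail $\int_N^\infty S(x)x^{-s-1}\,dx$ vanishes for the same reason. With that substitution, your proof is complete and matches the standard source.
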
 
This has been used many times in random multiplicative function literature, most strikingly in Harper's work proving Helson's conjecture \cite{aharper201} (among other results). We will use this slightly differently to Caich's application since we need to use different techniques to show that the various events coming from bounding these products almost surely occur. The approach used is similar to that of Gerspach's and Hardy's papers. We also want to exploit the martingale structure in $M_f(x)$. To do this, we use the same machinery as Caich and Mastrostefano.
\begin{proposition}
    (Doob's inequality for supermartingales, Theorem 9.1 in \cite{gut06}) Let $\lambda>0$ and suppose that the real sequence of random variables and $\sigma$-algebras $\{(X_n,\mathcal{F}_n)\}_{n \geqslant 0}$ is a non-negative supermartingale. Then
    $$\Prob\left(\max_{0 \leqslant n \leqslant N}X_n >\lambda\right) \leqslant \frac{\E(X_0)}{\lambda}.$$ \label{Doob_super}
\end{proposition}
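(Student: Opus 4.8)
The plan is to deduce the maximal inequality from the defining supermartingale property together with an optional-stopping argument applied to the first passage time above the level $\lambda$. Since this is a standard result, I will only sketch the architecture. First I would fix $N$ and introduce the stopping time
\[
    \tau := \min\{\, n \leqslant N : X_n > \lambda \,\},
\]
with the convention $\tau = N$ when no such $n$ exists. Because $\{(X_n,\mathcal{F}_n)\}$ is adapted, $\tau$ is indeed an $\{\mathcal{F}_n\}$-stopping time, and it is bounded by $N$, so the optional stopping theorem for supermartingales applies to the stopped process.

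The key step is the chain of inequalities
\[
    \E(X_0) \;\geqslant\; \E(X_{\tau\wedge N}) \;=\; \E(X_\tau)
    \;\geqslant\; \E\!\left(X_\tau \,\mathbf 1_{\{\max_{0\leqslant n\leqslant N}X_n>\lambda\}}\right)
    \;\geqslant\; \lambda\,\Prob\!\left(\max_{0\leqslant n\leqslant N}X_n>\lambda\right).
\]
Here the first inequality is optional stopping for the bounded stopping time $\tau$ (using that $(X_n)$ is a supermartingale); the second uses non-negativity of $X_\tau$ to drop the complementary event; and the third uses that on the event $\{\max_{n\leqslant N}X_n>\lambda\}$ we have $X_\tau > \lambda$ by construction of $\tau$ (the first time the process exceeds $\lambda$, the value is genuinely above $\lambda$). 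Rearranging gives the claimed bound
\[
    \Prob\!\left(\max_{0\leqslant n\leqslant N}X_n>\lambda\right) \;\leqslant\; \frac{\E(X_0)}{\lambda}.
\]

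I expect the only genuine point requiring care — the ``main obstacle'', such as it is — to be the justification of optional stopping, i.e. that $\E(X_{\tau\wedge N}) \leqslant \E(X_0)$. Since $\tau$ is bounded above by the deterministic time $N$, this follows from the elementary discrete optional stopping theorem for supermartingales with bounded stopping times, with no integrability subtleties beyond the implicit assumption (automatic here, as the $X_n$ are non-negative and we only ever need finitely many of them for fixed $N$) that each $X_n$ is integrable. Everything else is bookkeeping: one writes $X_{\tau\wedge N} = \sum_{n=0}^{N} X_n \mathbf 1_{\{\tau = n\}}$ and uses the supermartingale inequality step by step, or simply invokes the standard theorem. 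No new ideas are needed, and the argument is identical to the classical proof of Doob's maximal inequality for non-negative supermartingales.
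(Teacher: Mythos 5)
Your proof is correct and is precisely the classical optional-stopping argument; the paper does not prove this proposition itself but cites it directly from Gut's textbook, where this is exactly the argument given. No issues.
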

\begin{proposition}
    (Doob's $L^r$ inequality, Theorem 9.4 in \cite{gut06}) Let $r > 1$. Suppose the real sequence of random variables and $\sigma$-algebras $\{(X_n,\mathcal{F}_n)\}_{0 \leqslant n \leqslant N}$ is a non-negative submartingale which is bounded in $L^r$. Then
    $$\E\left[\left(\max_{0 \leqslant n \leqslant N}X_n\right)^r\right] \leqslant \left(\frac{r}{r-1}\right)^r\max_{0 \leqslant n \leqslant N}\left(\E(X_n^r)\right).$$ \label{Lr-Doob}
\end{proposition}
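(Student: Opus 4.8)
The plan is to run the classical layer-cake argument, with the weak maximal inequality for submartingales as the only non-elementary input. First I would record Doob's weak maximal inequality: since $(X_n,\mathcal F_n)$ is a non-negative submartingale, for every $\lambda>0$,
\[
\lambda\,\Prob\!\left(\max_{0\leqslant n\leqslant N}X_n\geqslant\lambda\right)\leqslant\E\!\left[X_N\,\mathbf{1}_{\{\max_{0\leqslant n\leqslant N}X_n\geqslant\lambda\}}\right].
\]
This follows by decomposing the event $\{\max_{0\leqslant n\leqslant N}X_n\geqslant\lambda\}$ into the disjoint first-passage events $A_k=\{X_0<\lambda,\dots,X_{k-1}<\lambda,\ X_k\geqslant\lambda\}$ and using $\E[X_N\mathbf{1}_{A_k}]\geqslant\E[X_k\mathbf{1}_{A_k}]\geqslant\lambda\,\Prob(A_k)$, which is valid because $A_k\in\mathcal F_k$ and $\E[X_N\mid\mathcal F_k]\geqslant X_k$; then one sums over $0\leqslant k\leqslant N$.

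Write $M=\max_{0\leqslant n\leqslant N}X_n$. Before integrating I would check that $M\in L^r$: since $0\leqslant M\leqslant\sum_{n=0}^N X_n$, a finite sum of $L^r$ random variables, Minkowski's inequality gives $\E[M^r]<\infty$. Then, by the layer-cake formula and Tonelli's theorem,
\[
\E[M^r]=\int_0^\infty r\lambda^{r-1}\Prob(M\geqslant\lambda)\,\dd\lambda\leqslant\int_0^\infty r\lambda^{r-2}\,\E\!\left[X_N\mathbf{1}_{\{M\geqslant\lambda\}}\right]\dd\lambda=r\,\E\!\left[X_N\int_0^M\lambda^{r-2}\,\dd\lambda\right]=\frac{r}{r-1}\,\E\!\left[X_N M^{r-1}\right],
\]
where the middle inequality is the weak maximal inequality above, the following step swaps the $\lambda$-integral with the expectation by Tonelli (the integrand being non-negative), and the last step uses $r>1$ so that $\int_0^M\lambda^{r-2}\,\dd\lambda=M^{r-1}/(r-1)$.

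Finally I would apply Hölder's inequality with conjugate exponents $r$ and $r/(r-1)$ to the last expectation, obtaining $\E[X_N M^{r-1}]\leqslant\left(\E[X_N^r]\right)^{1/r}\left(\E[M^r]\right)^{(r-1)/r}$. Since $\E[M^r]<\infty$ we may divide through by $(\E[M^r])^{(r-1)/r}$ (the case $\E[M^r]=0$ being trivial), which yields $(\E[M^r])^{1/r}\leqslant\frac{r}{r-1}(\E[X_N^r])^{1/r}$, i.e. $\E[M^r]\leqslant\big(\tfrac{r}{r-1}\big)^r\E[X_N^r]$. To match the stated right-hand side, observe that $x\mapsto x^r$ is convex and non-decreasing on $[0,\infty)$, so $(X_n^r,\mathcal F_n)$ is again a submartingale by the conditional Jensen inequality; hence $n\mapsto\E[X_n^r]$ is non-decreasing and $\E[X_N^r]=\max_{0\leqslant n\leqslant N}\E[X_n^r]$, which completes the proof. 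The only genuinely delicate point is the a priori finiteness of $\E[M^r]$, which legitimises the division by $(\E[M^r])^{(r-1)/r}$; this is exactly where the $L^r$-boundedness hypothesis, together with the finite horizon $N$, is used. In an infinite-horizon formulation one would instead bound $\E[(M\wedge K)^r]$ first and let $K\to\infty$ by monotone convergence.
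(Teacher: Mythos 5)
Your argument is correct and complete; note that the paper does not prove this proposition but cites it as Theorem 9.4 in Gut, and what you have written is precisely the classical layer-cake proof that one finds there (weak maximal inequality, Tonelli, H\"older, then divide using a priori finiteness of $\E[M^r]$, with the final step using that $(X_n^r)$ is again a submartingale by conditional Jensen). No gaps.
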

To prove Theorem \ref{sharp_restricted_ub}, we will also need Doob's maximal inequality.
\begin{proposition}
    (Doob's maximal inequality, Theorem 9.1 in \cite{gut06}) Suppose $\lambda > 0$. Let $\{X_n,\mathcal{F}_n\}_{n \geqslant 0}$ be a sequence of random variables with associated $\sigma$-algebras which forms a non-negative submartingale. Then
    $$\Prob\left(\max_{0 \leqslant k \leqslant n}X_k > \lambda\right) \leqslant \frac{1}{\lambda}\E(|X_n|).$$ \label{Doob_max}
\end{proposition}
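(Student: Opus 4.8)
The plan is to give the classical first-passage argument. First I would record the first index at which the threshold is exceeded: for $0 \leqslant j \leqslant n$ set
$$A_j := \{X_0 \leqslant \lambda,\ \ldots,\ X_{j-1} \leqslant \lambda,\ X_j > \lambda\},$$
so that each $A_j \in \mathcal{F}_j$, the sets $A_j$ are pairwise disjoint, and $\bigcup_{j=0}^{n} A_j = \{\max_{0 \leqslant k \leqslant n} X_k > \lambda\}$. This reduces matters to comparing $\sum_{j=0}^{n}\Prob(A_j)$ with $\E(X_n)$.

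Next I would bring in the submartingale property together with the non-negativity of $X_n$. Since the $A_j$ are disjoint and $X_n \geqslant 0$,
$$\E(X_n) \geqslant \sum_{j=0}^{n} \E\!\left(X_n \mathbf{1}_{A_j}\right) = \sum_{j=0}^{n} \E\!\left(\E(X_n \mid \mathcal{F}_j)\,\mathbf{1}_{A_j}\right) \geqslant \sum_{j=0}^{n} \E\!\left(X_j \mathbf{1}_{A_j}\right),$$
where the middle equality uses that $\mathbf{1}_{A_j}$ is $\mathcal{F}_j$-measurable (the tower property) and the final inequality uses $\E(X_n \mid \mathcal{F}_j) \geqslant X_j$. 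Finally, on $A_j$ one has $X_j > \lambda$, hence $\E(X_j \mathbf{1}_{A_j}) \geqslant \lambda\,\Prob(A_j)$; summing over $j$ gives
$$\E(X_n) \geqslant \lambda \sum_{j=0}^{n} \Prob(A_j) = \lambda\,\Prob\!\left(\max_{0 \leqslant k \leqslant n} X_k > \lambda\right).$$
Dividing by $\lambda$ and using $\E(|X_n|) = \E(X_n)$ yields the stated inequality.

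The argument has essentially no obstacle: the only point requiring a little care is checking that $\{\max_{0\leqslant k \leqslant n} X_k > \lambda\}$ decomposes exactly into the disjoint first-passage events $A_j$, after which everything is the conditional Jensen inequality (here just the defining property of a submartingale) and non-negativity. An alternative I would accept is to apply the optional stopping theorem to the bounded stopping time $\tau \wedge n$, where $\tau := \inf\{k \geqslant 0 : X_k > \lambda\}$: this gives $\E(X_n) \geqslant \E(X_{\tau \wedge n}) \geqslant \E(X_\tau \mathbf{1}_{\{\tau \leqslant n\}}) \geqslant \lambda\,\Prob(\tau \leqslant n)$, the same bound. Since the result is quoted from \cite{gut06} we would only sketch it, but the pathwise decomposition above is self-contained and avoids invoking optional stopping.
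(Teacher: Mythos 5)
Your proof is correct and is the standard first-passage decomposition argument that one finds in the cited textbook (Gut, Theorem 9.1); the paper itself only quotes the result and gives no proof. Both your pathwise decomposition and the optional-stopping alternative you mention are sound, so nothing needs fixing.
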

Finally, we give a key inequality used in Caich's work which is based on a version of the Azuma--Hoeffding inequality. We first state the standard version of said inequality, which is needed for the proof of Theorem \ref{sharp_restricted_ub}.
\begin{proposition}
    (Azuma--Hoeffding, \cite{hoeffding1994probability}) Let $(X_n)_{n \geqslant 0}: E \to \mathbb{R}$ be independent random variables that satisfy the bound $a_i < X_i(x) < b_i$ for any $x \in E$. Then we have
    $$\Prob\left(\sum_{k=1}^NX_k - \E\left(\sum_{k=1}^NX_k\right) > t\right) \leqslant 2\exp\left(-\frac{2t^2}{\sum_{k=1}^N(b_k-a_k)^2}\right).$$ \label{standard-AH}
\end{proposition}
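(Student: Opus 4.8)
The plan is to follow the classical exponential-moment (Chernoff) argument. First I would fix $\lambda > 0$ and write, by Markov's inequality applied to the random variable $\exp\!\big(\lambda \sum_{k=1}^N (X_k - \E X_k)\big)$,
\[
\Prob\!\left(\sum_{k=1}^N X_k - \E\Big(\sum_{k=1}^N X_k\Big) > t\right) \leqslant e^{-\lambda t}\,\E\exp\!\left(\lambda \sum_{k=1}^N (X_k - \E X_k)\right).
\]
Since the $X_k$ are independent, the expectation on the right factorises as $\prod_{k=1}^N \E\exp(\lambda(X_k - \E X_k))$, so the whole problem reduces to bounding a single factor.

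The key step is \emph{Hoeffding's lemma}: if $Y$ is a random variable with $\E Y = 0$ and $a \leqslant Y \leqslant b$ almost surely, then $\E e^{\lambda Y} \leqslant \exp\!\big(\lambda^2 (b-a)^2 / 8\big)$. I would prove this by convexity: on the interval $[a,b]$ one has $e^{\lambda y} \leqslant \frac{b-y}{b-a} e^{\lambda a} + \frac{y-a}{b-a} e^{\lambda b}$, so taking expectations and using $\E Y = 0$ gives $\E e^{\lambda Y} \leqslant \frac{b}{b-a} e^{\lambda a} - \frac{a}{b-a} e^{\lambda b} =: e^{\varphi(\lambda)}$. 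One then checks $\varphi(0) = \varphi'(0) = 0$ and $\varphi''(\lambda) \leqslant (b-a)^2/4$ for all $\lambda$ (the second derivative is the variance of a Bernoulli-type distribution, hence at most a quarter of the square of the range), so Taylor's theorem with remainder yields $\varphi(\lambda) \leqslant \lambda^2(b-a)^2/8$. Applying this with $Y = X_k - \E X_k$, which lies in an interval of length $b_k - a_k$, gives $\E e^{\lambda(X_k - \E X_k)} \leqslant \exp(\lambda^2 (b_k-a_k)^2/8)$.

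Combining the two steps, $\Prob(\cdot > t) \leqslant \exp\!\big(-\lambda t + \tfrac{\lambda^2}{8}\sum_{k=1}^N (b_k-a_k)^2\big)$. Optimising over $\lambda > 0$, the exponent is minimised at $\lambda = 4t / \sum_k (b_k-a_k)^2$, giving the bound $\exp\!\big(-2t^2 / \sum_{k=1}^N (b_k-a_k)^2\big)$. The factor of $2$ in front of the stated inequality is harmless (it in fact also covers the two-sided version by a union bound over $\pm$, should one want it); since $e^{-2t^2/\sum(b_k-a_k)^2} \leqslant 2 e^{-2t^2/\sum(b_k-a_k)^2}$ trivially, the claimed inequality follows. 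The only genuine obstacle is the proof of Hoeffding's lemma, i.e.\ the uniform bound $\varphi''(\lambda) \leqslant (b-a)^2/4$; everything else is a one-line Chernoff bound plus an elementary optimisation. Since this is a textbook result quoted from \cite{hoeffding1994probability}, in the paper itself one may simply cite it rather than reproduce the argument.
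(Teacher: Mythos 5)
Your proof is correct and is the standard Chernoff-plus-Hoeffding's-lemma argument. The paper does not prove this statement at all (it is quoted as a black box from Hoeffding's 1963 paper, and the proposition label in the source is simply a citation), so there is nothing in the paper to compare against; but your sketch is the canonical route, your identification of Hoeffding's lemma as the sole nontrivial step is accurate, and you correctly note that the one-sided Chernoff optimisation already yields a bound \emph{without} the factor of $2$, which is therefore trivially dominated by the stated (two-sided-style) inequality.
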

\begin{proposition}
    (Caich, Lemma 3.12 in \cite{caich23}) Let $\{(X_n,\mathcal{F}_n\}_{n \leqslant N}$ be a complex sequence of martingale differences. Assume that $X_n$ is bounded almost surely (we suppose there exists real number $b>0$ such that $|X_n| < b$ almost surely). Further, assume that $|X_n|\leqslant S_n$ where $(S_n)_{n \leqslant N}$ is a sequence of real random variables, where for each $n$, $S_n$ is $\mathcal{F}_{n-1}$ measurable. Define the event $\mathcal{E} := \{\sum_{1 \leqslant n \leqslant N}S_n^2 \leqslant T\}$ where $T>0$ is a deterministic constant. Then for any $\epsilon>0$,
    $$\Prob\left(\left\{\left|\sum_{n \leqslant N}X_n\right| > \epsilon\right\} \bigcap \{\mathcal{E}\}\right) \leqslant 2\exp\left(-\frac{\epsilon^2}{10T}\right).$$ \label{Caich_AH_in}
\end{proposition}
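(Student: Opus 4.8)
The plan is to reduce this conditional concentration statement to the standard Azuma--Hoeffding machinery via a stopping-time argument, since the event $\me$ is $\mathcal{F}_{n-1}$-measurable in a suitable sense through the dominating sequence $(S_n)$. First I would split each complex martingale difference into its real and imaginary parts, $X_n = Y_n + i Z_n$, noting that $(Y_n,\mathcal{F}_n)$ and $(Z_n,\mathcal{F}_n)$ are both real martingale difference sequences with $|Y_n|, |Z_n| \leqslant |X_n| \leqslant S_n$ and $|Y_n|,|Z_n| < b$ almost surely. If we can show $\Prob(\{|\sum Y_n| > \epsilon/2\} \cap \me) \leqslant \exp(-\epsilon^2/(20T))$ and likewise for $Z_n$, then a union bound over $\{|\sum X_n| > \epsilon\} \subseteq \{|\sum Y_n| > \epsilon/2\} \cup \{|\sum Z_n| > \epsilon/2\}$ gives the claim (with a little room to spare in the constant).

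Next, for the real sequence $(Y_n)$, I would introduce the predictable stopping time
\[
  \tau := \min\Bigl\{ m \leqslant N : \sum_{n=1}^{m+1} S_n^2 > T \Bigr\},
\]
with $\tau := N$ if no such $m$ exists; since $S_{n}$ is $\mathcal{F}_{n-1}$-measurable, the event $\{\tau \geqslant m\}$ is $\mathcal{F}_{m-1}$-measurable, so $\tau$ is a predictable stopping time and the stopped sum $\wt{Y}_n := \sum_{k=1}^{n} Y_k \mathds{1}_{\{k \leqslant \tau\}}$ is again a martingale. On the event $\me$ we have $\tau = N$, so $\sum_{n \leqslant N} Y_n = \wt{Y}_N$ there, and hence
\[
  \Prob\Bigl(\Bigl\{\Bigl|\sum_{n \leqslant N} Y_n\Bigr| > \epsilon/2\Bigr\} \cap \me\Bigr)
  \leqslant \Prob\bigl(|\wt{Y}_N| > \epsilon/2\bigr).
\]
The point of passing to $\wt Y$ is that its conditional quadratic variation is now bounded \emph{unconditionally}: $\sum_{n} \E(\wt{Y}_n^2 - \wt Y_{n-1}^2 \mid \mathcal{F}_{n-1}) \leqslant \sum_{k \leqslant \tau} S_k^2 \leqslant T + S_{\tau+1}^2$, and one has to be slightly careful with that last term. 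I would absorb it either by assuming (harmlessly, after enlarging $T$ by $O(b^2)$) that $S_n \leqslant b$, or by defining $\tau$ via $\sum_{n=1}^{m} S_n^2 > T$ instead so that the stopped quadratic variation is $\leqslant T$ on the nose; in the application $S_n$ will be bounded, so this is a non-issue.

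Finally, I would apply a martingale concentration inequality of Azuma--Hoeffding type for sums with bounded increments and bounded conditional quadratic variation — the standard statement being that if $W = \sum V_n$ with $|V_n| \leqslant b$ a.s.\ and $\sum \E(V_n^2 \mid \mathcal{F}_{n-1}) \leqslant T$ a.s., then $\Prob(|W| > \lambda) \leqslant 2\exp(-\lambda^2/(2T + 2b\lambda/3))$ (a Freedman-type bound), or more simply the version with $(b_k - a_k)^2$ replaced by the predictable variation as in Proposition \ref{standard-AH}. Taking $\lambda = \epsilon/2$ and noting $\epsilon$ may be taken small relative to $b$ (the interesting regime; the large-$\epsilon$ case is trivial since probabilities are $\leqslant 1$ and one can choose the numerical constant to make $2\exp(-\epsilon^2/(10T)) \geqslant 1$ there anyway — though really one should just check the bound holds for all $\epsilon$ by the boundedness of $|W| \leqslant Nb$), the exponent becomes $-\epsilon^2/(8T + O(b\epsilon))$, and after the factor-of-two losses from the real/imaginary split one lands at $2\exp(-\epsilon^2/(10T))$ provided the constant $10$ is chosen generously enough relative to these adjustments. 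The main obstacle, and the only genuinely delicate point, is making the stopping-time truncation interact cleanly with the boundary term $S_{\tau+1}^2$ and tracking the numerical constant through the real/imaginary decomposition so that it comes out as the clean $10$ claimed; everything else is a direct invocation of standard martingale concentration.
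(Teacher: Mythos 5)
Your overall plan — peel off the event $\mathcal{E}$ via a predictable stopping time, reduce the complex martingale to two real ones, and invoke an Azuma--Hoeffding bound with predictable increments — is a valid route to this inequality, but it is a different route from the paper's. The paper (following Caich, who in turn follows Pinelis, \cite{pinelis1992approach}) argues directly with the complex-valued martingale, using the fact that $\mathbb{C}$ is a $2$-smooth Hilbert space: a Pinelis-type exponential supermartingale argument for Hilbert-space-valued martingales yields $\Prob(|\sum X_n| > \epsilon, \mathcal{E}) \leqslant 2\exp(-\epsilon^2/(2T))$ with no loss from a real/imaginary decomposition, and the constant $10$ in the statement is then simply a safety margin. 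Your decomposition approach genuinely pays for the split: with the $\epsilon/2$ union bound and the two-sided conditional-Hoeffding estimate you actually land at $4\exp(-\epsilon^2/(8T))$, not the $2\exp(-\epsilon^2/(20T))$ you wrote — and note $2\exp(-\epsilon^2/(20T)) \geqslant 2\exp(-\epsilon^2/(10T))$, so even that would run in the wrong direction. The fix is to use the sharper decomposition $\{|\sum X_n| > \epsilon\} \subseteq \{|\sum Y_n| > \epsilon/\sqrt 2\} \cup \{|\sum Z_n| > \epsilon/\sqrt 2\}$ (from $(\sum Y_n)^2 + (\sum Z_n)^2 > \epsilon^2$), giving $4\exp(-\epsilon^2/(4T))$, and then split on $\epsilon^2 \gtrless \tfrac{20}{3}T\log 2$: for $\epsilon^2 \geqslant \tfrac{20}{3}T\log 2$ this dominates $2\exp(-\epsilon^2/(10T))$, while for smaller $\epsilon$ the target $2\exp(-\epsilon^2/(10T)) \geqslant 1$ is trivial. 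This small-$\epsilon$ triviality is the right way to close the gap, rather than appealing to $|W| \leqslant Nb$.

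Two further small points. First, your worry about the boundary term $S_{\tau+1}^2$ is unfounded: with your own definition $\tau = \min\{m : \sum_{n=1}^{m+1} S_n^2 > T\}$ one already has $\sum_{n\leqslant\tau} S_n^2 \leqslant T$ on the nose (apply the definition at $m = \tau-1$), so there is nothing to absorb and no need for the alternative definition. Second, the inequality you actually need for the stopped real martingale is not Proposition \ref{standard-AH} as written (that is for independent variables with deterministic two-sided bounds) nor Freedman's inequality (whose extra $b\lambda/3$ term you would then have to argue away), but the martingale Azuma--Hoeffding with $\mathcal{F}_{n-1}$-measurable bounds whose squared sum is deterministically $\leqslant T$; this follows by the conditional Hoeffding lemma and the observation that $\exp(\theta W_n - \tfrac{\theta^2}{2}\sum_{k\leqslant n}a_k^2)$ is a supermartingale. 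With that substitution and the $\epsilon/\sqrt 2$ fix, your argument goes through; it is simply a less economical proof than the Pinelis-style one the paper refers to, which avoids both the union bound and the coordinate split and so gets the sharp constant $2$ in the exponent denominator directly.
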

This is proved in a similar way to a result of Pinelis, which is Theorem 3 in \cite{pinelis1992approach}. A significant part of the Euler product analysis will come from splitting the Euler products up into contributions from smaller and larger primes (in particular we hope to get more cancellation from the larger primes, and show that we can control the smaller primes in some sense). For this, we need a mildly modified version of one of Harper's Euler product results for Rademacher Euler products.
\begin{proposition}
    (Euler Product Result 2, \cite{harper202}) Suppose $400 \leqslant y < z$ for sufficiently large $z$ with $\alpha,\beta,t_1,t_2 \in \mathbb{R}$ where $\alpha,\beta$ are fixed and $\sigma \geqslant -1/\log(z)$, then \begin{align*}
        \E & \left(\prod_{y < p \leqslant z}\left|1+\frac{f(p)}{p^{1/2+\sigma+it_1}}\right|^{2\alpha}\left|1+\frac{f(p)}{p^{1/2+\sigma+it_2}}\right|^{2\beta}\right) \\
        = \exp\biggl\{&\sum_{y < p \leqslant z}\frac{\alpha^2 + \beta^2 + (\alpha^2 - \alpha)\cos(2t_1\log(p)) + (\beta^2-\beta)\cos(2t_2\log(p))}{p^{1+2\sigma}} \\
        &+ \sum_{y < p \leqslant z} \frac{2\alpha\beta(\cos((t_1-t_2)\log(p)) + \cos((t_1+t_2)\log(p))}{p^{1+2\sigma}} + O\left(\frac{1}{\sqrt{y}\log(y)}\right)\biggl\}.
    \end{align*} 
    For $\sigma \leqslant \frac{1}{\log(z)}$, then the above equals
    \begin{align*}
        =& \exp\left(O(\max\{\alpha,\beta,\alpha^2,\beta^2\}\left(1+\frac{|t_1|+|t_2|}{(\log(y))^{100}}\right)\right) \\ \cdot&\left(1+\min\left\{\frac{\log(z)}{\log(y)},\frac{1}{|t_1|\log(y)}\right\}\right)^{\alpha^2-\alpha} \left(1+\min\left\{\frac{\log(z)}{\log(y)},\frac{1}{|t_2|\log(y)}\right\}\right)^{\beta^2-\beta} \\
        \cdot& \left(\frac{\log(z)}{\log(y)}\right)^{\alpha^2+\beta^2}\left(\left(1+\min\left\{\frac{\log(z)}{\log(y)},\frac{1}{|t_1+t_2|\log(y)}\right\}\right)\left(1+\min\left\{\frac{\log(z)}{\log(y)},\frac{1}{|t_1-t_2|\log(y)}\right\}\right)\right)^{2\alpha\beta}.
    \end{align*} \label{euler_prod_result}
\end{proposition}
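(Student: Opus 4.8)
The plan is to exploit the independence of the $f(p)$, which turns the whole problem into a per‑prime computation. Since the $f(p)$ with $y<p\le z$ are independent, the expectation factorises as a product over $p$ of
\begin{equation*}
\E\left(\left|1+\frac{f(p)}{p^{1/2+\sigma+it_1}}\right|^{2\alpha}\left|1+\frac{f(p)}{p^{1/2+\sigma+it_2}}\right|^{2\beta}\right),
\end{equation*}
and since $f(p)$ takes the values $\pm1$ each with probability $\half$, and $|1\pm p^{-1/2-\sigma-it_j}|^2=1\pm2u_j+v$ with $u_j:=p^{-1/2-\sigma}\cos(t_j\log p)$ and $v:=p^{-1-2\sigma}$, this factor is exactly the two‑point average
\begin{equation*}
\tfrac12(1+2u_1+v)^{\alpha}(1+2u_2+v)^{\beta}+\tfrac12(1-2u_1+v)^{\alpha}(1-2u_2+v)^{\beta}.
\end{equation*}

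Next I would Taylor expand this per‑prime factor. The hypothesis $\sigma\ge-1/\log z$ forces $|u_j|\le e\,p^{-1/2}$ and $v\le e^2p^{-1}$ for all $p\le z$, while $p\ge y\ge400$ keeps $|2u_j+v|$ safely below $1$, so the binomial and logarithmic series converge and all error terms below are uniform in $t_1,t_2$ (the implied constants depending only on the fixed $\alpha,\beta$). The key point is that the two‑point average is the even part of $(1+2u_1+v)^{\alpha}(1+2u_2+v)^{\beta}$ under the joint sign flip $(u_1,u_2)\mapsto(-u_1,-u_2)$, so every monomial of odd total $u$‑degree cancels; keeping all contributions that are not $O(p^{-3/2})$ gives that the factor equals $1+(\alpha+\beta)v+2\alpha(\alpha-1)u_1^2+2\beta(\beta-1)u_2^2+4\alpha\beta\,u_1u_2+O(p^{-3/2})$. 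Taking logarithms (the factor is $1+O(p^{-1})$, so $\log(1+w)=w+O(p^{-2})$), then substituting $u_j^2=\tfrac v2(1+\cos(2t_j\log p))$ and $u_1u_2=\tfrac v2(\cos((t_1-t_2)\log p)+\cos((t_1+t_2)\log p))$, and using $\alpha+\beta+(\alpha^2-\alpha)+(\beta^2-\beta)=\alpha^2+\beta^2$, collapses the per‑prime logarithm to precisely the summand in the first displayed formula, with error $O(p^{-3/2})$ per prime. Summing over $y<p\le z$ and using $\sum_{p>y}p^{-3/2}\ll1/(\sqrt y\log y)$ then yields the first formula.

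For the second formula I would restrict to the overlap range $-1/\log z\le\sigma\le1/\log z$, where $p^{-1-2\sigma}\asymp p^{-1}$ for $p\le z$, and estimate the prime sums appearing in the exponent of the first formula. The non‑oscillatory part is handled by Mertens' theorem, $\sum_{y<p\le z}p^{-1-2\sigma}=\log\frac{\log z}{\log y}+O(1/\log y)$, producing the $(\log z/\log y)^{\alpha^2+\beta^2}$ factor. For each oscillatory sum $\sum_{y<p\le z}\cos(\theta\log p)\,p^{-1-2\sigma}$ with $\theta\in\{2t_1,2t_2,t_1-t_2,t_1+t_2\}$ I would invoke the standard estimate used throughout Harper's Euler‑product arguments, coming from comparing $\sum_{p\le x}p^{-1-2\sigma-i\theta}$ with $\log\zeta$ and a classical zero‑free region, namely that it is $O\!\left(\min\{\log\frac{\log z}{\log y},\,1/(|\theta|\log y)\}\right)+O(|\theta|/(\log y)^{100})$; exponentiating and attaching the coefficients $\alpha^2-\alpha$, $\beta^2-\beta$ and $2\alpha\beta$ to the frequencies $2t_1$, $2t_2$ and $t_1\pm t_2$ respectively then reassembles exactly the stated product of $\left(1+\min\{\cdots\}\right)$ factors together with the $\exp(O(\max\{\alpha,\beta,\alpha^2,\beta^2\}(1+(|t_1|+|t_2|)/(\log y)^{100})))$ remainder.

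I expect the main obstacle to be this last step: reproducing the exact shape of the crude bound — all four $\min$‑terms with the correct exponents, plus the $(\log y)^{100}$ correction — demands careful bookkeeping of the oscillatory prime sums with full uniformity in $t_1,t_2$, and some care near $\theta=0$, where the $\min$ degenerates into the Mertens term. By contrast the per‑prime analysis is actually \emph{simpler} here than in the Steinhaus setting of \cite{harper202}, since $f(p)^2=1$ makes each Euler factor a genuine two‑point average; the only real modification is carrying two distinct shifts $t_1,t_2$ and two distinct exponents $\alpha,\beta$ through the same expansion.
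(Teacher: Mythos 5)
The paper itself does not prove this proposition: it is imported verbatim from Harper \cite{harper202} (Euler Product Result 2), with only the remark that Harper's restriction to $\alpha,\beta\geqslant 0$ is immaterial, as Gerspach also observes in the proof of his Lemma 8 in \cite{gerspach20}. So there is no ``paper's proof'' to compare against; what you have written is a reconstruction of Harper's argument, and it is essentially correct.

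Your per‑prime computation for the first (exact) formula is complete and right: independence gives the factorisation; $f(p)^2=1$ makes each factor the two‑point average $\tfrac12(1+2u_1+v)^{\alpha}(1+2u_2+v)^{\beta}+\tfrac12(1-2u_1+v)^{\alpha}(1-2u_2+v)^{\beta}$; the binomial expansion kills the odd $u$‑terms and leaves $1+(\alpha+\beta)v+2\alpha(\alpha-1)u_1^2+2\beta(\beta-1)u_2^2+4\alpha\beta u_1u_2+O(p^{-3/2})$; the product‑to‑sum identities and the identity $\alpha+\beta+(\alpha^2-\alpha)+(\beta^2-\beta)=\alpha^2+\beta^2$ then collapse the logarithm to exactly the stated summand, and $\sum_{p>y}p^{-3/2}\ll 1/(\sqrt y\log y)$ gives the error term. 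Your check that $1\pm2u_j+v$ stays bounded away from $0$ for $p\geqslant y\geqslant 400$, $\sigma\geqslant-1/\log z$ is also the correct justification for why the expansion works uniformly, and it is exactly what one needs to see that the argument goes through for negative $\alpha,\beta$ (which is the one genuine extension of Harper's statement that the paper uses, e.g.\ with $\alpha=1,\beta=-1$).

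For the second (crude) formula, your outline is the right one — Mertens for the non‑oscillatory term, standard estimates for $\sum_{y<p\le z}\cos(\theta\log p)p^{-1-2\sigma}$ with $\theta\in\{2t_1,2t_2,t_1\pm t_2\}$ — and you are correct to flag that this is where the real bookkeeping lives; that is precisely where Harper's proof does its work. One small thing to keep straight if you flesh this out: the bound $\min\{\log(\log z/\log y),1/(|\theta|\log y)\}$ on the oscillatory sum is only a one‑sided statement about its magnitude, and you need it with an explicit sign to get the $(1+\min\{\cdots\})^{\alpha^2-\alpha}$ factors as a two‑sided estimate once $\alpha^2-\alpha$ can be negative; Harper/Gerspach handle this by first proving the exact formula and then bounding each oscillatory sum by $O(\min\{\cdots\})+O(|\theta|/(\log y)^{100})$ in absolute value before exponentiating, which is what your sketch intends but should be stated. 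With that caveat, your proposal is a faithful and essentially complete route to the result.
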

While Harper's result restricts to $\alpha,\beta \geqslant 0$, it does not change the method of proof (this is noted in the work of Gerspach in the proof of his Lemma 8 in \cite{gerspach20}, which is very similar to Euler Product Result 1 in \cite{harper202}). In practice, we are going to focus on $t_2 = 0$, and $-1/2 \leqslant t_1 \leqslant 1/2$, so the leading exponential term at the beginning of the second inequality can be safely ignored as a multiplicative error. We further specialise to the case $\alpha =1, \beta=-1$. Then for $t_2=0$, we have from the first statement in Proposition \ref{euler_prod_result},
$$\E\left(\prod_{y < p \leqslant z}\left|\frac{1+\frac{f(p)}{p^{1/2+\sigma+it_1}}}{1+\frac{f(p)}{p^{1/2+\sigma}}}\right|^{2}\right) \ll \exp\left(\sum_{y \leqslant p \leqslant z}\frac{4-4\cos(t_1\log(p))}{p^{1+2\sigma}}\right).$$
We then can Taylor expand the cosine in the above and use the inequality $1 - \cos(t) \leqslant \frac{t^2}{2}$ to show the above can be upper bounded by
$$\exp\left(\sum_{y < p \leqslant z}\frac{2t^2(\log(p))^2}{p^{1+2\sigma}}\right).$$
Choosing $y = 3/2$ and $z = [\exp(1/|t|)]$, we get for any $t \in \mathbb{R}$
\begin{equation}
    \E\left(\prod_{y < p \leqslant z}\left|\frac{1+\frac{f(p)}{p^{1/2+\sigma+it}}}{1+\frac{f(p)}{p^{1/2+\sigma}}}\right|^{2}\right) \ll \exp\left(Ct^2\frac{1}{t^2}\right) \ll 1 \label{short_euler_cancel}
\end{equation}
given $\sum_{p \leqslant z}\frac{(\log(p))^2}{p^{1+2\sigma}} \ll (\log(z))^2$ for this range of $\sigma$. This means we instead look at the short Euler product $|F_{e^{1/|t|}}(1/2+\sigma)|^2$ to consider the contribution from the "small" primes (the primes of size less than $\exp(1/|t|)$) as opposed to the shifted Euler product in $t$. This generates a significant saving for what we are looking at, since the expectation bound given by Proposition \ref{euler_prod_result} is rather wasteful when looking at $|F_{e^{1/|t|}}(1/2+\sigma)|^2$. This saving cannot be replicated when investigating the moments of the weighted Rademacher sums. \par
Another saving we obtain is using a sharp expectation bound on the mass of the Rademacher random Euler product over the interval $[N-1/2,N+1/2]$. This is not required in Hardy's work, since the main contribution to $M_f(x)$ for Steinhaus $f$ is concentrated on the near $t=0$, which is different in our case. We use this result to bound the contribution from Euler products on the range $[1/2,\log(x)]$. Caich also uses this result, but can apply it more directly than we can due to our weighted sums.
\begin{proposition}
    (Multiplicative Chaos Result 3, \cite{harper23}) Let $f(n)$ be a Rademacher multiplicative function. Then uniformly for sufficiently large $X$, any $q \in [0,1]$, $\sigma  \in [-1/\log(X),1/(\log(X))^{0.01}]$ and $|N| \leqslant (\log(X))^{1000}$, we have
    \begin{equation*}
        \E(\left(\int_{N-1/2}^{N+1/2}|F_X(1/2+\sigma+it)|^2 dt\right)^q) \ll (\log_2(|N|+10))^q\left(\frac{\min\{\log(X),1/|\sigma|\}}{1+(1-q)\sqrt{\log_2(X)}}\right)^q
    \end{equation*} \label{mult_chaos}
\end{proposition}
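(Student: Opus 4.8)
The plan is to argue as Harper does for the low moments of critical multiplicative chaos in \cite{aharper201, harper23} — this statement is Multiplicative Chaos Result 3 of \cite{harper23} — so I only indicate the shape of the proof. Replacing $t$ by $-t$ and conjugating, we may take $N \geqslant 0$, and we write $I := \int_{N-1/2}^{N+1/2}|F_X(1/2+\sigma+it)|^2\,dt$. First I would pass from the Euler product to the total mass of an approximately Gaussian chaos. Since $f(p)^2 = 1$ identically, expanding the logarithm gives
\[
\log|F_X(1/2+\sigma+it)|^2 = 2\sum_{p\leqslant X}\frac{f(p)\cos(t\log p)}{p^{1/2+\sigma}} - \Delta(t) + O(1), \qquad \Delta(t) := \sum_{p\leqslant X}\frac{\cos(2t\log p)}{p^{1+2\sigma}},
\]
so, with $G(t) := \sum_{p\leqslant X}f(p)\cos(t\log p)\,p^{-1/2-\sigma}$, we have $I \ll \int_{N-1/2}^{N+1/2}e^{-\Delta(t)}e^{2G(t)}\,dt$. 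The field $G$ is approximately log-correlated Gaussian: $\mathrm{Var}(G(t)) = \tfrac12\sum_{p\leqslant X}p^{-1-2\sigma} + \tfrac12\Delta(t)$, and for $|t_1-t_2|\leqslant 1$ one has $\mathrm{Cov}(G(t_1),G(t_2)) \approx \tfrac12\log\min\{\log X, 1/|t_1-t_2|, 1/|\sigma|\}$ up to an $N$-dependent cross term. Two things are already visible: the chaos is built from $\asymp \log\min\{\log X, 1/|\sigma|\}$ independent scales, which is why $\min\{\log X, 1/|\sigma|\}$ (and not $\log X$) appears; and $e^{-\Delta(t)}$ cancels $e^{2\mathrm{Var}(G(t))}$ in the first moment, consistently with the exact identity $\E(I) = \prod_{p\leqslant X}(1+p^{-1-2\sigma}) \asymp \min\{\log X, 1/|\sigma|\}$.

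For $q \in [1 - 1/\sqrt{\log_2 X},\, 1]$ the asserted bound follows from this first-moment identity and Jensen's inequality for the concave map $x\mapsto x^q$; the content is the range $q \leqslant 1 - 1/\sqrt{\log_2 X}$, where one must improve on Jensen by a factor $\asymp ((1-q)\sqrt{\log_2 X})^q$. This ``better than squareroot cancellation'' is the heart of the matter and is exactly the phenomenon of \cite{aharper201}: up to constants the mass $\int_{N-1/2}^{N+1/2}e^{-\Delta(t)}e^{2G(t)}\,dt$ all comes from an event on which the partial sums $\sum_{p\leqslant Z}f(p)\cos(t\log p)\,p^{-1/2-\sigma}$ stay below a barrier of the appropriate slope for every intermediate $Z$ and every $t$ in the interval, and this event has probability $\asymp 1/\sqrt{\log_2 X}$ — a Gaussian-random-walk (ballot) estimate, the walk having $\asymp \log_2 X$ steps. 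One then splits $\E(I^q)$ over a family of such barrier events at varying heights, applies H\"{o}lder on each piece in the form $\E(I^q\mathbf{1}_A) \leqslant (\E(I\mathbf{1}_A))^q\,\Prob(A)^{1-q}$, and sums: the small probabilities of these events played against the concavity of $x\mapsto x^q$ produce the factor $(1 + (1-q)\sqrt{\log_2 X})^{-q}$. The conditional moment bounds needed to run this are supplied by Proposition \ref{euler_prod_result} and the Euler-product estimates of \cite{harper202}, uniformly in $\sigma \in [-1/\log X, 1/(\log X)^{0.01}]$; the upper restriction on $\sigma$ is exactly what forces $\min\{\log X, 1/|\sigma|\} \geqslant (\log X)^{0.01}$, keeping $\asymp \log_2 X$ scales so that the $\sqrt{\log_2 X}$ saving is genuine. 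Finally, the leftover factor $(\log_2(|N|+10))^q$ is the price of working away from $t = 0$: there $\Delta(t)$ is not identically zero but $\sup_{|t-N|\leqslant 1/2}|\Delta(t)| \ll \log_2(|N|+10)$ by the classical bounds $1/\log(|y|+2) \ll |\zeta(1+iy)| \ll \log(|y|+2)$, and the shifted cross term $\tfrac12\sum_p\cos((t_1+t_2)\log p)\,p^{-1-2\sigma}$ distorts the covariance kernel; carrying both through the barrier argument is where $(\log_2(|N|+10))^q$ enters. This deterministic term is exactly what is absent in the Steinhaus case, and following it is the one genuinely Rademacher-specific point.

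The step I expect to be the main obstacle is the barrier/ballot estimate itself: proving, uniformly in $q \in [0,1]$ and in $\sigma$ over the stated range, that the approximately Gaussian walk built from the primes lies below the chosen barrier with exactly the right probability, and that the chaos mass is genuinely concentrated on that event. This is the technical core of \cite{aharper201}; it is where the extra $\sqrt{\log_2 X}$ is created, and it is also where the off-origin ($N \neq 0$) corrections are most delicate, since the barrier has to be chosen to absorb the oscillation of $\Delta(t)$ over the interval $[N-1/2, N+1/2]$. This is precisely why the statement is imported from \cite{harper23} as a black box rather than reproved here: reconstructing it would amount to redoing the central estimates of \cite{aharper201, harper202}.
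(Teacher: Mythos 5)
The paper does not prove this proposition: it is cited verbatim as Multiplicative Chaos Result 3 of \cite{harper23}, with the remark that it can be extrapolated from Key Proposition 3 of \cite{aharper201}, so there is no in-paper proof to compare against. Your outline is a faithful high-level sketch of Harper's barrier/ballot argument (correctly identifying the Rademacher-specific deterministic term $\Delta(t)$ and the source of the $(\log_2(|N|+10))^q$ and $(1+(1-q)\sqrt{\log_2 X})^{-q}$ factors), and you are right that it is imported here as a black box precisely because reproducing it would mean redoing the technical core of \cite{aharper201} and \cite{harper23}.
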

The proof of this result can be extrapolated from Key Proposition $3$ in Harper's paper on low moments of sums of random multiplicative functions \cite{aharper201}. For $q \in [0,1)$, then one can prove this type of result leaning more heavily into standard Gaussian multiplicative chaos techniques; see the recent works of Gorodetsky and Wong for example \cite{gorodetsky2024short}, \cite{gorodetsky2024martingale} and \cite{gorodetsky2025limiting}. This is not the only result we need to use from \cite{aharper201}, but his Proposition 6 (Key Proposition \ref{gaussian_walk} in this paper) requires a lot more work to state, so we delay this until Section \ref{ce}. We will provide the probability result used in this section (even though we do not use it in this form).
\begin{proposition}
    (Probability Result 1, \cite{aharper201}) Let $a\geqslant 1$. For any sufficiently large integer $n > 1$, let $(G_k)_{k=1}^n$ be a sequence of independent real Gaussian random variables with mean $0$ and variance between $1/20$ and $20$, say. Suppose $h$ is a function such that $|h(j)| < 10\log(j)$. Then
    $$\Prob\left(\sum_{m=1}^j G_m \leqslant a + h(j),  \ \forall \ 1 \leqslant j \leqslant n \right) \asymp \min\left\{\frac{a}{\sqrt{n}},1\right\}.$$ \label{ballot}
\end{proposition}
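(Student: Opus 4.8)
The plan is to transfer the estimate to a barrier–crossing problem for Brownian motion, where the case $h\equiv 0$ is precisely the reflection principle, and then to show that the barrier $a+h(j)$ is a harmless perturbation of the constant barrier $a$. Since the $G_k$ are independent centred Gaussians with variances $v_k:=\mathrm{Var}(G_k)\in[1/20,20]$, the partial sums $S_j:=\sum_{k\le j}G_k$ have the same joint law as $(B_{V_j})_{1\le j\le n}$ for a standard Brownian motion $B$, where $V_j:=\sum_{k\le j}v_k$ satisfies $j/20\le V_j\le 20j$. Thus the probability to estimate is $\Prob\big(B_{V_j}\le a+h(j)\ \forall\,1\le j\le n\big)$, and the benchmark throughout is the identity $\Prob\big(\max_{0\le t\le T}B_t\le a\big)=2\Phi(a/\sqrt T)-1\asymp\min\{a/\sqrt T,1\}$. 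For $a\gtrsim \log n$ both bounds are already immediate: then $a-10\log(j+2)\le a+h(j)\le a+10\log n$ with both ends comparable to $a$, and the reflection principle applies directly, so the real content lies in the range $1\le a\lesssim\log n$.

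For the lower bound I would use $a+h(j)\ge a-10\log(j+2)$ and, since $V_j\ge j/20$, reduce to bounding below $\Prob\big(B_t\le a-g(t)\ \forall\,t\le V_n\big)$, where $g(t):=10\log(20t+2)$. The key point is that $\dot g(t)=100/(10t+1)\in L^2(0,\infty)$, so by Cameron--Martin the laws of $(B_t)_{t\le T}$ and $(B_t+g(t))_{t\le T}$ are equivalent with density $M=\exp\big(\int_0^T\dot g\,\dd B-\tfrac12\int_0^T\dot g^2\big)$, whence
\[
\Prob\big(B_t\le a-g(t)\ \forall\,t\le T\big)\;=\;\E\big[\mathbf{1}\{\textstyle\max_{[0,T]}B\le a\}\,M\big].
\]
A conditional Jensen inequality bounds this below by $\Prob\big(\max_{[0,T]}B\le a\big)\exp\!\big(\E[\log M\mid \max_{[0,T]}B\le a]\big)$; writing $\int_0^T\dot g\,\dd B=\dot g(T)B_T+\int_0^T\frac{1000}{(10t+1)^2}B_t\,\dd t$ and using the standard estimate $\E[B_t\mid\max_{[0,T]}B\le a]\ge -C\sqrt t$, the $(10t+1)^{-2}$ weight makes every term $O(1)$, so the conditional mean of $\log M$ is bounded below by an absolute constant and the desired lower bound $\gg\min\{a/\sqrt n,1\}$ follows.

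For the upper bound, with $1\le a\lesssim\log n$ one cannot simply replace the barrier by the constant $a+10\log n$ (that loses a factor $\log n$), so I would argue by induction on $n$: condition on $\mathcal F_{\lfloor n/2\rfloor}$, apply the reflection principle to the essentially constant barrier that the fresh walk sees on $(\lfloor n/2\rfloor,n]$, and invoke the inductive hypothesis on the initial segment. The estimate that makes the induction close is that, conditionally on staying below the barrier up to $\lfloor n/2\rfloor$, the walk is not atypically negative, i.e. $\E\big[\mathbf{1}\{S_j\le a+h(j)\ \forall\,j\le\lfloor n/2\rfloor\}\,|S_{\lfloor n/2\rfloor}|\big]\ll\sqrt n\,\Prob\big(S_j\le a+h(j)\ \forall\,j\le\lfloor n/2\rfloor\big)$, itself a ballot-type bound proved in the same way; the perturbations accrued at the successive dyadic scales sum to $O(1)$ precisely because $\sum_j \log j\,/\,j^{3/2}<\infty$. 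I expect the genuine difficulty to be exactly this small-$a$ upper bound, together with the passage between $\max_{j\le n}S_j=\max_j B_{V_j}$ and the continuous maximum $\max_{[0,V_n]}B$: a naive union bound over the grid cells costs $O(\sqrt{\log n})$ in barrier height, which is again too lossy when $a\lesssim\log n$, so one must instead use the fluctuation/renewal-theoretic fact that the discrete ballot probability $\Prob(\max_{j\le n}S_j\le a)$ is $\asymp\min\{\max(a,1)/\sqrt n,1\}$, the discretization costing only $O(1)$ on average. Everything else — the lower bound and the $a\gtrsim\log n$ regime — is then routine.
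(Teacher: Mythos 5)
This proposition is not proved in the paper you are reading: it is imported verbatim, labelled ``Probability Result 1'' and explicitly attributed to \cite{aharper201}, as one of the black-box preliminaries in Section 2. There is therefore no internal proof to compare your sketch against, and I will assess it on its own terms.

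Your lower-bound route is sound and is, in fact, a standard device in the persistence and multiplicative-chaos literature: since $g(t)=10\log(20t+2)$ has $\dot g(t)\ll 1/t$ in $L^2(0,\infty)$, the log barrier is a Cameron--Martin shift, and the Radon--Nikodym density $M$ combined with conditional Jensen, together with the estimate $\E[B_t\,|\,\max_{[0,T]}B\le a]\gtrsim -\sqrt{t}$ (uniform in $a\ge1$) and the decay of $\ddot g$, does give $\E[\log M\mid A]\ge -C$ and hence the lower bound $\gg\min\{a/\sqrt n,1\}$. The passage from the discrete walk to the time-changed Brownian motion is also harmless in this direction, since the continuous constraint implies the discrete one.

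The upper bound as you present it has a genuine gap. The dyadic recursion bounds $p_n$ by $\E\bigl[\mathbf 1_A\cdot\Prob(\text{second half stays below}\mid S_{\lfloor n/2\rfloor})\bigr]$, and you close the loop with $\E[\mathbf 1_A\,|S_{\lfloor n/2\rfloor}|]\ll\sqrt n\,p_{n/2}$. But the discrete ballot estimate on the second half contributes a multiplicative constant $C_0>1$ and the auxiliary moment estimate contributes another constant $C'$; the recursion then reads $p_n\le C_0C'(1+o(1))\,p_{n/2}$, and iterating over the $\asymp\log_2 n$ dyadic scales produces a factor $(C_0C')^{\log_2 n}$, i.e.\ a polynomial loss in $n$. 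The observation that $\sum_j\log j/j^{3/2}<\infty$ controls the cumulative \emph{additive} barrier perturbation, but it does nothing to tame the \emph{multiplicative} constants in the two order-of-magnitude estimates you iterate. To make this work one needs either a tight ballot identity at each scale with an explicit $1+O(\log n/\sqrt n)$ error rather than a Vinogradov bound, or a one-shot argument that avoids iteration altogether — for instance a change of measure by the discrete harmonic function for the walk killed at the barrier (as in the persistence-probability literature), or a direct restriction to geometrically spaced sample times combined with a Gaussian comparison inequality, which is closer in spirit to what Harper does. As written, the small-$a$ upper bound is not yet a proof.
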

To deal with the random Euler products on the real line, we need an upper bound which Hardy uses in Section 2.7 of his paper.
\begin{proposition}
    (Upper exponential bound, Lemma 8.2.1 of Gut \cite{gut06}) Let $(X_n)_{n=1}^N$ be independent, mean $0$ random variables. Suppose $\sigma_k^2 = \mathbf{V}(X_k)$, $s_m^2 = \sum_{k \leqslant m}\sigma_k^2$ and that there exists $c_N > 0$ such that
    $$|X_m| \leqslant c_N s_N \text{ for } m=1,...,N$$
    Then for $x \in (0,1/c_N)$,
    $$\Prob\left(\sum_{k=1}^NX_k > xs_N\right) \leqslant \exp\left(-\frac{x^2}{2}\left(1-\frac{xc_N}{2}\right)\right).$$ \label{chernoff}
\end{proposition}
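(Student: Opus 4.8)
The plan is to prove Proposition~\ref{chernoff} by the classical Cram\'er--Chernoff exponential moment method — this is essentially the proof of Bennett's inequality, specialised to the boundedness hypothesis $|X_m|\leqslant c_N s_N$.

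\emph{Step 1 (exponential Markov bound).} First I would fix $t>0$ and apply Markov's inequality to $\exp(t\sum_k X_k)$; using independence this gives
\[
\Prob\left(\sum_{k=1}^N X_k > x s_N\right) \leqslant e^{-t x s_N}\prod_{k=1}^N \E\left(e^{t X_k}\right).
\]
\emph{Step 2 (bounding each moment generating factor).} Write $M := c_N s_N$, so $|X_k|\leqslant M$ almost surely. Expanding $e^{t X_k} = 1 + t X_k + \sum_{j\geqslant 2}(t X_k)^j/j!$ and taking expectations term by term (legitimate since $X_k$ is bounded), the mean-zero hypothesis kills the linear term, and the bound $|\E(X_k^j)| \leqslant M^{j-2}\E(X_k^2) = M^{j-2}\sigma_k^2$ for $j\geqslant 2$ yields
\[
\E\left(e^{t X_k}\right) \leqslant 1 + \sigma_k^2 t^2 \sum_{j\geqslant 2}\frac{(tM)^{j-2}}{j!}.
\]

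\emph{Step 3 (elementary inequality).} The next step is the elementary estimate $\sum_{j\geqslant 2} v^{j-2}/j! = (e^v - 1 - v)/v^2 \leqslant \tfrac12 + \tfrac{v}{4}$ for $v\in[0,1]$, which I would verify by a short calculus argument: the function $g(v) = \tfrac{v^2}{2} + \tfrac{v^3}{4} - e^v + 1 + v$ satisfies $g(0)=g'(0)=g''(0)=0$, and checking the sign of $g'''$ shows $g'>0$ on $(0,1]$, hence $g\geqslant 0$ there. Combining with $1+y\leqslant e^y$, for any $t$ with $tM\in[0,1]$ we get $\E(e^{t X_k}) \leqslant \exp\!\big(\sigma_k^2 t^2(\tfrac12 + \tfrac{tM}{4})\big)$, and multiplying over $k$ turns the product in Step~1 into $\exp\!\big(s_N^2 t^2(\tfrac12 + \tfrac{tM}{4})\big)$.

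\emph{Step 4 (choice of $t$).} Finally I would take $t = x/s_N$, which is admissible because then $tM = x c_N \in (0,1)$ by the standing hypothesis $x\in(0,1/c_N)$. Substituting into the Chernoff bound gives the exponent $-x^2 + x^2(\tfrac12 + \tfrac{x c_N}{4}) = -\tfrac{x^2}{2} + \tfrac{x^3 c_N}{4} = -\tfrac{x^2}{2}\big(1 - \tfrac{x c_N}{2}\big)$, which is exactly the claimed bound. I do not expect a genuine obstacle: the only mildly delicate points are the elementary bound on $(e^v-1-v)/v^2$ in Step~3 and keeping the chosen $t$ inside the range $tM<1$ forced by the hypothesis; the rest is the standard template. (Alternatively one could cite Bennett's inequality and weaken it, but carrying out the exponential-moment computation directly is just as short and self-contained.)
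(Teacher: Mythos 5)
The paper does not actually prove this proposition --- it cites it directly as Lemma 8.2.1 of Gut's book --- so there is no in-paper proof to compare against, and your task is simply to supply one. Your exponential-moment (Chernoff/Bennett) argument is the standard route and, up to one small slip in the calculus, it is correct: Step~1 is the usual Markov bound after applying $\exp(t\,\cdot\,)$, Step~2 follows from $|X_k|^j \leqslant (c_N s_N)^{j-2} X_k^2$ together with $\E(X_k)=0$, and the choice $t = x/s_N$ in Step~4 (admissible precisely because $x\in(0,1/c_N)$ keeps $tM<1$) balances the exponent to give $-\tfrac{x^2}{2}\bigl(1-\tfrac{xc_N}{2}\bigr)$ exactly.

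The one place that needs more care is the calculus justification in Step~3. For $g(v) = \tfrac{v^2}{2}+\tfrac{v^3}{4}-e^v+1+v$ one has $g'''(v)=\tfrac{3}{2}-e^v$, which changes sign at $v=\log(3/2)\approx 0.41$; so the statement that ``checking the sign of $g'''$ shows $g'>0$ on $(0,1]$'' does not hold as written. The repair is short: $g'''>0$ on $\bigl(0,\log(3/2)\bigr)$ and $g'''<0$ thereafter, so $g''$ (which starts at $0$) rises to a maximum and then falls, hence $g'$ is increasing then decreasing on $[0,1]$. Since $g'(0)=0$ and $g'(1)=\tfrac{11}{4}-e>0$, it follows that $g'\geqslant 0$ throughout $[0,1]$, hence $g\geqslant 0$ there, which is the inequality $(e^v-1-v)/v^2\leqslant \tfrac12+\tfrac{v}{4}$ you need. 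With that endpoint check added, your proof is complete and self-contained.
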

This is also known as a Chernoff bound in other probability contexts. \par

\section{Upper bound} \label{upper_beginning}
\subsection{Reduction to test points}
This proof combines Caich's work \cite{caich23} with a detailed analysis on a slightly different random Euler product following the approach of Gerspach \cite{gerspach20} and Hardy \cite{shardy23}. The first step is to rewrite the sum $\sum_{n \leqslant x}\frac{f(n)}{\sqrt{n}}$ where we split the sum according to the size of the prime factors of $n$. This was used in the work of Lau--Tenenbaum--Wu \cite{WTL13} to improve the efficiency of the inequality in Lemma \ref{prop_hyper}. The first appearance of this argument was in the work of Hal\'{a}sz on random multiplicative functions \cite{halasz83}.
\begin{lemma}
    (Lemma 2.4, \cite{WTL13}) Let $f$ be a Rademacher multiplicative function, and let $A>0$ be some fixed constant. Then there exists $\gamma := \gamma(A) \in (0,1)$ such that for $[y]$ denotes the integer part of $y$
    \begin{equation}
        x_i := [e^{i^{\gamma}}] \quad (i \geqslant 1) \label{point_def}
    \end{equation}
    we have that almost surely
    \begin{equation}
        \max_{x_{i-1} \leqslant x \leqslant x_i}\left|\sum_{x_{i-1} < n \leqslant x}f(n) \right| \ll \frac{\sqrt{x_i}}{(\log(x_{i}))^A}, \quad (i \geqslant 1). \label{test_point}
    \end{equation} \label{test_lem}
\end{lemma}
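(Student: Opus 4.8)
The plan is to reduce the statement to a summable family of probabilistic events and to handle the maximum by a dyadic chaining (Rademacher--Menshov style) argument fed by the hypercontractive moment bound of Proposition~\ref{prop_hyper}, closing with the first Borel--Cantelli lemma. Write $S_i(x):=\sum_{x_{i-1}<n\leqslant x}f(n)$ (note $x_{i-1},x_i\in\mathbb{Z}$). Since $S_i(\cdot)$ is constant on each interval $[m,m+1)$, $m\in\mathbb{Z}$, we have $\max_{x_{i-1}\leqslant x\leqslant x_i}|S_i(x)|=\max_{m\in\mathbb{Z}\cap[x_{i-1},x_i]}|S_i(m)|$, so it suffices to exhibit a constant $C=C(A)$ for which
\[
\sum_{i\geqslant 1}\Prob\!\left(\max_{x_{i-1}\leqslant m\leqslant x_i}|S_i(m)|>C\,\frac{\sqrt{x_i}}{(\log x_i)^{A}}\right)<\infty;
\]
the usual Borel--Cantelli argument then gives the almost sure bound for every $i\geqslant1$, the finitely many exceptional indices being absorbed into a larger (random) implied constant. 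The structural fact about the points (\ref{point_def}) that makes this work is that $\log x_i\asymp i^{\gamma}$ while $L_i:=x_i-x_{i-1}\asymp x_i\,i^{\gamma-1}\asymp x_i(\log x_i)^{1-1/\gamma}$, so for small $\gamma$ the interval $(x_{i-1},x_i]$ is shorter than $x_i$ by a large fixed power of $\log x_i$; this is the slack the argument spends, and it is why $\gamma$ must be chosen small depending on $A$.

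For the chaining, put $J_i:=\lceil\log_2 L_i\rceil\ll\log x_i$ and, for $0\leqslant j\leqslant J_i$, let $\mathcal{D}_j$ be the family of dyadic subintervals of $(x_{i-1},x_i]$ of length $2^{j}$ anchored at $x_{i-1}$, of which there are at most $L_i2^{-j}+1$. Expanding $m-x_{i-1}$ in binary expresses each $S_i(m)$ as a sum of at most $J_i+1$ block sums $\Sigma(I):=\sum_{n\in I}f(n)$, one from each level, so
\[
\max_{x_{i-1}\leqslant m\leqslant x_i}|S_i(m)|\ \leqslant\ \sum_{j=0}^{J_i}\ \max_{I\in\mathcal{D}_j}|\Sigma(I)|.
\]
Fix an even exponent $2k$ with $k=k(A)\geqslant2$. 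By Minkowski's inequality in $L^{2k}(\Prob)$ together with a union bound inside each level,
\[
\left\|\max_{x_{i-1}\leqslant m\leqslant x_i}|S_i(m)|\right\|_{2k}\ \leqslant\ \sum_{j=0}^{J_i}\left(\sum_{I\in\mathcal{D}_j}\E|\Sigma(I)|^{2k}\right)^{1/2k},
\]
and Proposition~\ref{prop_hyper} applied to each block gives $\E|\Sigma(I)|^{2k}\leqslant\big(\sum_{n\in I}\tau_{2k-1}(n)\big)^{k}$.

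The arithmetic input is a divisor-sum estimate. For a block $I$ with $2^{j}\geqslant\sqrt{x_i}$, a standard short-interval bound for the $(2k-1)$-fold divisor function (Shiu's theorem) gives $\sum_{n\in I}\tau_{2k-1}(n)\ll_k 2^{j}(\log x_i)^{2k-2}$; for $2^{j}<\sqrt{x_i}$ the trivial bound $\tau_{2k-1}(n)\ll_{k,\epsilon}x_i^{\epsilon}$ gives $\sum_{n\in I}\tau_{2k-1}(n)\ll_{k,\epsilon} 2^{j}x_i^{\epsilon}$. Inserting these, using $2^{j}\leqslant L_i$ and the bound on $|\mathcal{D}_j|$, the levels with $2^{j}\geqslant\sqrt{x_i}$ contribute at most $\ll_k L_i^{1/2}(\log x_i)^{k}$ to the displayed sum, while the levels with $2^{j}<\sqrt{x_i}$ contribute at most $\ll_k x_i^{1/2-\delta_0}$ for a fixed $\delta_0=\delta_0(k)>0$ (here $k\geqslant2$ is used, and $\epsilon=\epsilon(k)$ is taken small enough). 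Hence, since $L_i\asymp x_i(\log x_i)^{1-1/\gamma}$,
\[
\E\Big(\max_{x_{i-1}\leqslant m\leqslant x_i}|S_i(m)|\Big)^{2k}\ \ll_k\ x_i^{k}(\log x_i)^{2k^{2}+k-k/\gamma}\ +\ x_i^{\,k-2k\delta_0}.
\]
By Markov's inequality the $i$-th probability above is $\ll_{k,C}(\log x_i)^{2k^{2}+k-k/\gamma+2kA}+x_i^{-\delta_0}$. Taking $k=2$ and recalling $\log x_i\asymp i^{\gamma}$, the first term is $\asymp i^{\,\gamma(2k^{2}+k+2kA)-k}$, which is summable in $i$ as soon as $\gamma(2k^{2}+k+2kA)<k-1$ (possible because $k-1=1>0$), and the second is summable since $x_i\asymp e^{i^{\gamma}}$. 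Choosing $\gamma=\gamma(A)\in(0,1)$ this small and applying the first Borel--Cantelli lemma finishes the proof.

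The step I expect to be the real obstacle is the union bound. Applying Markov's inequality point by point over the $\asymp L_i\asymp x_i(\log x_i)^{1-1/\gamma}$ integers of $(x_{i-1},x_i]$ is hopeless, since $x_i$ grows faster in $i$ than any power of $\log x_i$ and the resulting estimate is not summable however small $\gamma$ is. The dyadic chaining is exactly what rescues the argument: the ``long'' blocks, which carry essentially all of the $L^{2k}$ mass, number only $O(1)$ at the top scale, whereas the many ``short'' blocks have negligible moments; but making the exponents close forces $(x_{i-1},x_i]$ to be shorter than $x_i$ by a large power of $\log x_i$, which is precisely what dictates how small $\gamma$ must be taken in terms of $A$. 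It is also here that at least fourth moments from Proposition~\ref{prop_hyper} are needed, a plain second-moment (Rademacher--Menshov) bound yielding an exponent that never becomes summable.
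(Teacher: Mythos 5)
The paper does not prove this lemma itself; it simply quotes it from Lau--Tenenbaum--Wu \cite{WTL13}, so there is no in-paper proof to compare against. Your argument is correct and follows the route one expects for a result of this kind: reduce to a summable family of events for Borel--Cantelli, decompose the maximum over $(x_{i-1},x_i]$ by a dyadic chaining (Menshov--Rademacher) argument, feed each dyadic block into the hypercontractive moment bound of Proposition~\ref{prop_hyper}, and bound the resulting divisor sums by Shiu's short-interval estimate for the long blocks and the trivial $\tau_{2k-1}(n)\ll n^{\epsilon}$ bound for the short ones. The bookkeeping with $k=2$ and $\gamma<1/(10+4A)$ is sound, and the key structural observation --- that the interval length $L_i\asymp x_i(\log x_i)^{1-1/\gamma}$ is shorter than $x_i$ by a large power of $\log x_i$, which is what makes the $(\log x_i)^{2kA}$ loss from Markov affordable --- is exactly the point of the sparse test-point choice.
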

From this, we simply need to apply partial summation to show that
\begin{equation}
    \max_{x \in [x_{i-1}, x_i]}\left|\sum_{x_{i-1} < n \leqslant x}\frac{f(n)}{\sqrt{n}} \right| \ll \frac{\sqrt{x_{i}}}{\sqrt{x_{i-1}}}\frac{1}{\log(x_{i-1})} \ll \frac{1}{\log(x_{i-1})}. \label{test_bound}
\end{equation}
Note that in the result of Lau--Tenenbaum--Wu, their proof of Lemma \ref{test_lem} allows for an explicit choice of $\gamma$. We will find that in order to have 
$$\max_{x \in [x_{i-1}, x_i]}\left|\sum_{x_{i-1} < n \leqslant x}\frac{f(n)}{\sqrt{n}} \right| \ll \frac{1}{\log(x_i)},$$
one can choose any $\gamma < 1/320$. For the rest of this paper, we will choose $\gamma$ far smaller than this for reasons which will become more apparent throughout the proof (for our purposes, we assume $\gamma \leqslant 10^{-3}$). The bound obtained in equation (\ref{test_bound}) is stronger than we actually need, and this allows us to analyse only at the test points $x_i$ to deduce our upper bound for $M_f(x)$.
\begin{definition}
    \textit{Let $\epsilon>0$. Then we almost surely have for the $(x_i)$ defined in equation (\ref{point_def})}
    \begin{equation}
        M_f(x_{i}) \ll (\log_2(x_i))^{3/4+\epsilon}. \label{key_in}
    \end{equation} \label{key_prop}
\end{definition}
The combination of equations (\ref{test_bound}) and (\ref{key_in}) is sufficient to prove the upper bound, which we will complete after proving the proposition above. \par
\subsection{Splitting $M_f(x_i)$ by prime factors}
To prove Key Proposition \ref{key_prop}, we need to use some of definitions used in Caich's work and before that Mastrostefano. For $\ell \in \mathbb{N}$, we take $X_{\ell} = \exp\left(2^{\ell^K}\right)$, where $K=\left\lfloor\frac{25}{\epsilon}\right\rfloor$. In particular, we see that each $x_i$ is contained in the interval $(X_{\ell-1},X_{\ell}]$ for some $\ell \in \mathbb{N}$. As observed in both Caich and Mastrostefano's work, this makes our task more difficult than what we see in Hardy and Lau--Tenenbaum--Wu since we will have more test points $x_i$ in the interval $(X_{\ell-1},X_{\ell}]$. This is why we need the more powerful martingale machinery to assist us. We also define the finite sequence of numbers $(y_j)_{j=0}^J$ where we have that
\begin{equation*}
    y_0 = \exp\left(2^{\ell^K(1-K/\ell)}\right), \quad y_j = \exp\left(e^{j/\ell}2^{\ell^K(1-K/\ell)}\right).
\end{equation*}
where $J$ is the smallest integer such that $y_J > X_i$. Then we have that
\begin{equation*}
    J \ll \ell^K \asymp \log_2(x_i) \asymp \log_2(y_j),
\end{equation*}
for $x_i \in (X_{\ell-1},X_{\ell}]$ and $1 \leqslant j \leqslant J$ when $\ell$ is sufficiently large. At this point, we also let $j^*$ be the largest $j$ such that
\begin{equation}
    \frac{\log(X_{\ell-1})}{{\log(y_j)}} > \ell^{2K/\gamma}. \label{j^*_def}
\end{equation}
We choose this cutoff because it ensures that each $x_i$ has a large enough proportion of the sum covered by the martingale structure. The number of $j^* < j \leqslant J$ is $\ll \ell^K$. This will be important when it comes to bounding the first contribution over the very smooth numbers. A crucial point is that $j^*$ only depends on $\ell$. For future improvements, one might consider using prime ranges which depend on $i$ instead.
Then we formulate a sequence of events $\mathcal{A}_{\ell}$, with the idea being to show that the probability of $\mathcal{A}_{\ell}$ occuring is summable in $\ell$ and then finally applying the first Borel--Cantelli lemma to obtain that the event $\mathcal{A}_{\ell}$ occurs at most finitely often, which is sufficient to prove Proposition \ref{key_prop}. We define
\begin{equation}
    \mathcal{A}_{\ell} := \left\{\sup_{X_{\ell-1} < x_i \leqslant X_{\ell}} \frac{|M_f(x_i)|}{(\log_2(x_i))^{3/4+\epsilon}} > 4\right\} \label{A_def}
\end{equation}
We want to partition the event $\mathcal{A}_\ell$ according to the following decomposition of $M_f(x_i)$.
\begin{align*}
    M_f(x_i) =& S_{i,0} + \sum_{1 \leqslant j \leqslant J}S_{i,j}; \\
    S_{i,0} =& \sum_{\substack{n \leqslant x_i\\P(n) \leqslant y_0}}\frac{f(n)}{\sqrt{n}}; \\
    S_{i,j} =& \sum_{y_{j-1} < p \leqslant y_j}\frac{f(p)}{\sqrt{p}} \sum_{\substack{n \leqslant x_i/p\\P(n) < p}}\frac{f(n)}{\sqrt{n}}.
\end{align*}
We distinguish the two cases of $y_j$ on their size compared to $x_i$, where we have that $\frac{\log(X_\ell)}{\log(y_{j})} > \ell^{2K/\gamma}$ and $\frac{\log(X_\ell)}{\log(y_{j})} \leqslant \ell^{2K/\gamma}$. This allows us to define two new events:
\begin{align}
    \mathcal{B}_{0,\ell} :=& \left\{ \sup_{X_{\ell-1} < x_i \leqslant X_\ell}\sum_{0 \leqslant j \leqslant j^*}\frac{|S_{i,j}|}{(\log_2(x_i))^{1/4+\epsilon}} > 2\right\} ;\\
    \mathcal{B}_{1,\ell} :=& \left\{ \sup_{X_{\ell-1} < x_i \leqslant X_\ell}\sum_{j^* < j \leqslant J}\frac{|S_{i,j}|}{(\log_2(x_i))^{3/4+\epsilon}} > 2\right\}. \label{B1_def}
\end{align}
By the triangle inequality, we see that $\Prob(\mathcal{A}_\ell) \leqslant \Prob(\mathcal{B}_{0,\ell}) + \Prob(\mathcal{B}_{1,\ell})$, so to prove Key Proposition \ref{key_prop}, it is sufficient to show that $\Prob(\mathcal{B}_{0,\ell})$ and $\Prob(\mathcal{B}_{1,\ell})$ are summable in $\ell$. Note that we introduced the splitting on the size of the primes much earlier than Caich does. This is because we have to proceed differently with the smaller primes since we do not get as much cancellation from considering only smooth numbers. We note that there are at most $\frac{2K\ell\log(\ell)}{\gamma} \ll \ell\log(\ell)$ such $y_j$ of the larger primes. We now prove $\Prob(\mathcal{B}_{0,\ell})$ is summable conditional on the complement of the event $\mathcal{H}_\ell'$,
\begin{equation}
    \mathcal{H}_\ell ' := \left\{\left|\frac{F_{y_{j^*}}(1/2)}{(\log(y_{j^*}))^{-1/10}}\right| > A\right\},\label{G_prime}
\end{equation}
where $A>0$ is a large constant.
\begin{lemma}
    $\Prob(\mathcal{B}_{0,\ell})$ is summable in $\ell$ given $\Prob(\mathcal{H}_\ell')$ is summable. \label{PB_0k}
\end{lemma}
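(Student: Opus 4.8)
\emph{Proof strategy.} I will show $\Prob(\mathcal B_{0,\ell}) \le \Prob(\mathcal H_\ell') + (\text{a quantity summable in }\ell)$, by splitting the content of $\mathcal B_{0,\ell}$ into a piece that does not depend on the test point $x_i$ and one that does. Write $U_{i,j} := \sum_{n \le x_i,\, P(n) \le y_j} f(n)/\sqrt n$, so that $S_{i,0} = U_{i,0}$ and $S_{i,j} = U_{i,j} - U_{i,j-1}$ for $1 \le j \le j^*$, whence $\sum_{0 \le j \le j^*} |S_{i,j}| \le 2 \sum_{0 \le j \le j^*} |U_{i,j}|$. Since $F_{y_j}(1/2) = \sum_{P(n) \le y_j} f(n)/\sqrt n$ is a finite sum, $U_{i,j} = F_{y_j}(1/2) - R_{i,j}$ with $R_{i,j} := \sum_{n > x_i,\, P(n) \le y_j} f(n)/\sqrt n$, so
\[
\sum_{0 \le j \le j^*} |S_{i,j}| \;\le\; 2 \sum_{0 \le j \le j^*} |F_{y_j}(1/2)| \;+\; 2 \sum_{0 \le j \le j^*} |R_{i,j}| .
\]
The first sum is independent of $i$. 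I will show that, off an event of summable probability, \emph{both} sums are at most $\tfrac13(\log_2 X_{\ell-1})^{1/4+\epsilon} \le \tfrac13(\log_2 x_i)^{1/4+\epsilon}$ for every $x_i \in (X_{\ell-1}, X_\ell]$; then the supremum defining $\mathcal B_{0,\ell}$ is at most $\tfrac43 < 2$ and the event fails.

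For the tail sum, orthogonality of $f$ on the squarefree integers gives $\E |R_{i,j}|^2 = \sum_{n > x_i,\, P(n) \le y_j,\, \mu^2(n) = 1} 1/n$, and Rankin's trick (optimising at $\delta = 1/\log y_j$) bounds this by $e^{-u_{i,j}} (\log y_j)^{O(1)}$ with $u_{i,j} = \log x_i / \log y_j$. For $j \le j^*$ and $x_i > X_{\ell-1}$, the definition (\ref{j^*_def}) of $j^*$ forces $u_{i,j} > \ell^{2K/\gamma}$, hence $(\E |R_{i,j}|^2)^{1/2} \le e^{-\ell^{2K/\gamma}/2} (\log X_{\ell-1})^{O(1)} = e^{-\ell^{2K/\gamma}/2} 2^{O((\ell-1)^K)}$. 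Summing over the $O(\ell^{K-1})$ values of $j$, applying Markov's inequality, and taking a union bound over the at most $2^{\ell^K/\gamma}$ points $x_i$ in $(X_{\ell-1}, X_\ell]$, the total probability that $\sum_{0\le j\le j^*}|R_{i,j}|$ is ever too large is $\exp(-\tfrac12 \ell^{2K/\gamma} + O(\ell^K))$, which is summable since $\gamma \le 10^{-3}$ makes $2K/\gamma$ much larger than $K$. This is precisely where the weighted smooth sum cannot simply be discarded as in the unweighted problem: lacking a trivial majorant for $\sum_{n \le x,\, P(n) \le y} |f(n)|/\sqrt n$, one pays for it with a strong Dirichlet series tail bound, available only because (\ref{j^*_def}) truncates the very smooth range so aggressively.

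For the Euler product sum I use multiplicativity. As $1 + f(q)/\sqrt q > 0$ for every $q$, we may write $F_{y_j}(1/2) = F_{y_{j^*}}(1/2)/G_j$ with $G_j := \prod_{y_j < q \le y_{j^*}} (1 + f(q)/\sqrt q) > 0$, so on $(\mathcal H_\ell')^c$,
\[
\sum_{0 \le j \le j^*} |F_{y_j}(1/2)| \;=\; |F_{y_{j^*}}(1/2)| \sum_{0 \le j \le j^*} G_j^{-1} \;\le\; A (\log y_{j^*})^{-1/10} \sum_{0 \le j \le j^*} G_j^{-1} .
\]
Proposition \ref{euler_prod_result} with $\alpha = -\tfrac12$, $\beta = 0$, $t_1 = t_2 = 0$, $\sigma = 0$ together with Mertens' theorem give $\E(G_j^{-1}) \asymp \exp\bigl(\sum_{y_j < q \le y_{j^*}} 1/q\bigr) \asymp \log y_{j^*} / \log y_j$; since $\log y_j = e^{j/\ell} \log y_0$, the expected sum $\sum_{0 \le j \le j^*} \E(G_j^{-1}) \asymp \log y_{j^*} \sum_j 1/\log y_j \asymp \ell \log y_{j^*} / \log y_0$ is essentially geometric. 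Markov's inequality then bounds the probability that $\sum_{0 \le j \le j^*} G_j^{-1}$ exceeds $(\log y_{j^*})^{1/10}(\log_2 X_{\ell-1})^{1/4+\epsilon}/(3A)$ by $O\bigl(\ell (\log y_{j^*})^{9/10} / (\log y_0 \cdot (\log_2 X_{\ell-1})^{1/4+\epsilon})\bigr)$, and since $\log y_0 = 2^{\ell^K - K\ell^{K-1}}$ while $\log y_{j^*} \le \log X_{\ell-1} = 2^{(\ell-1)^K}$, this is $\ll 2^{-\ell^K/20}$ for large $\ell$, hence summable. Off this event and off $\mathcal H_\ell'$ one therefore has $\sum_{0 \le j \le j^*} |F_{y_j}(1/2)| \le \tfrac13 (\log_2 X_{\ell-1})^{1/4+\epsilon}$.

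Combining, $\mathcal B_{0,\ell}$ is contained in the union of $\mathcal H_\ell'$, the event $\{\sum_{0 \le j \le j^*} G_j^{-1} > (\log y_{j^*})^{1/10}(\log_2 X_{\ell-1})^{1/4+\epsilon}/(3A)\}$, and the event that $\sum_{0 \le j \le j^*} |R_{i,j}| > \tfrac13(\log_2 X_{\ell-1})^{1/4+\epsilon}$ for some $x_i \in (X_{\ell-1}, X_\ell]$; the last two have summable probability and the first is summable by hypothesis, which proves the lemma. I expect the Euler product piece to be the main obstacle: one must recognise that factoring out $F_{y_{j^*}}(1/2)$ reduces matters to a first-moment estimate for $\sum_j G_j^{-1}$, and that Proposition \ref{euler_prod_result} is exactly sharp enough for it — the fact that this sum is dominated by its $j = 0$ term, rather than behaving like $j^*$ times a typical value of $G_{j^*}^{-1}$, reflects the same log-correlated behaviour of the shifted Euler product that pushes the exponents in Theorem \ref{ubtheorem} above the iterated-logarithm prediction.
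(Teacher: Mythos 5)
Your proof is correct, and it actually addresses a strictly stronger statement than the paper's own proof establishes. As written, the event $\mathcal{B}_{0,\ell}$ places the absolute values \emph{inside} the $j$-sum, $\sum_{0\le j\le j^*}|S_{i,j}|$, whereas the paper's proof only rewrites $\sum_{0\le j\le j^*}S_{i,j} = F_{y_{j^*}}(1/2) - \sum_{n>x_i,\,P(n)\le y_{j^*}}f(n)/\sqrt n$ and then applies the triangle inequality --- this controls $\bigl|\sum_j S_{i,j}\bigr|$, not the possibly larger $\sum_j|S_{i,j}|$, so the intended definition is presumably the outer-absolute-value version (the same ambiguity affects $\mathcal{B}_{1,\ell}$, whose treatment via Proposition~\ref{Caich_AH_in} is likewise an estimate for $\bigl|\sum_j S_{i,j}\bigr|$). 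Your argument does handle the inner version: the telescoping bound $\sum_j|S_{i,j}|\le 2\sum_j|U_{i,j}|$, the factorisation $F_{y_j}(1/2)=F_{y_{j^*}}(1/2)/G_j$ (valid because every factor $1+f(q)/\sqrt q$ is strictly positive for a Rademacher $f$), and the first-moment Markov bound on $\sum_j G_j^{-1}$ via Proposition~\ref{euler_prod_result} with $\alpha=-\tfrac12$ are the genuinely new ingredients, and your numerical estimates ($\E\sum_j G_j^{-1}\asymp\ell\log y_{j^*}/\log y_0$, the Markov probability $\ll 2^{-\ell^K/20}$) check out. Your tail estimate for $\sum_j|R_{i,j}|$ is essentially the paper's Rankin-trick computation (which the paper applies once at $j=j^*$) applied at each $j\le j^*$ and then summed; this costs nothing since the super-exponential saving $e^{-\ell^{2K/\gamma}}$ easily absorbs the $O(\ell^{K-1})$ extra terms and the union bound over the at most $2^{\ell^K/\gamma}$ test points. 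In short, the paper's route is shorter but proves only the outer-absolute-value variant; yours is longer but proves the event as literally stated.
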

\begin{proof}
    We can rewrite the sum in the event $\mathcal{B}_{0,\ell}$ in the following way;
    $$\sum_{0 \leqslant j \leqslant j^*}\frac{f(n)}{\sqrt{n}} = F_{y_{j^*}}(1/2) - \sum_{\substack{n > x_i \\ P(n) \leqslant y_{j^*}}}\frac{f(n)}{\sqrt{n}},$$
    since $\sum_{P(n) < y_{j^*}}\frac{f(n)}{\sqrt{n}} = F_{y_{j^*}}(1/2)$.
    We do not evaluate the expectation of the sum directly for the same reasons given in Gerspach \cite{gerspach20} since a direct application of Chebychev's inequality is too wasteful. This is why we introduced the additional splitting in the sum so early. Then, we use the triangle inequality and find
    \begin{align*}
        \Prob(\mathcal{B}_{0,\ell}) \leqslant& \ \Prob\left(\frac{|F_{y_{j^*}}(1/2)|}{(\log_2(X_{\ell-1}))^{1/4+\epsilon}} > 1\right) \\
        +& \ \Prob\left(\sup_{X_{\ell-1} < x_i \leqslant X_\ell}\frac{\left|\sum_{\substack{n > x_i \\ P(n) \leqslant y_{j^*}}}\frac{f(n)}{\sqrt{n}}\right|}{(\log_2(x_i))^{1/4+\epsilon}} > 1\right).
    \end{align*}
    The first probability can be bounded above by $\Prob(\mathcal{H}_\ell')$ (we are multiplying by a large factor in $\mathcal{H}_\ell'$ whereas we are dividing by a factor greater than $1$ here), so we only need to look at the second sum. From the union bound, Chebychev's inequality and Proposition \ref{prop_hyper}, we have
    $$\Prob(\mathcal{B}_{0,\ell}) \leqslant \sum_{X_{\ell-1} < x_i \leqslant X_\ell} \frac{\sum_{\substack{n > x_i \\ P(n) \leqslant y_{j^*}}}\frac{1}{n}}{(\log_2(x_i))^{1/2+2\epsilon}} + \Prob(\mathcal{H}_\ell').$$
    We assumed the second sum converges, so it is sufficient to understand the first sum. This can be upper bounded using Rankin's trick,
    $$\sum_{\substack{n > x_i \\ P(n) \leqslant y_{j^*}}}\frac{1}{n} \leqslant x_i^{-1/\log(y_{j^*})}\prod_{p \leqslant y_{j^*}}\left(1-\frac{1}{p^{1-\frac{1}{\log(y_{j^*})}}}\right)^{-1} \ll \frac{\log(y_{j^*})}{x_i^{1/\log(y_{j^*})}}.$$
    From the definition of $j^*$, we have
    $$x_i^{1/\log(y_{j^*})} > \exp(\ell^{2K/\gamma}).$$
    From this, we see
    $$\Prob(\mathcal{B}_{0,\ell}) \ll \sum_{X_{\ell-1} < x_i \leqslant X_\ell}\frac{\log(y_{j^{*}})}{x_i^{1/\log(y_{j^*})}(\log_2(x_i))^{1/2+2\epsilon}} + \Prob(\mathcal{H}_\ell') \ll \frac{(\log(X_{\ell}))^{1/\gamma+1}}{\exp(\ell^{2K/\gamma})}+\Prob(\mathcal{H}_\ell'),$$
    which is summable in $\ell$ using that $\log(X_\ell) = 2^{\ell^K}$ (note that we have ignored the $(\log_2(x_i))^{1/2+2\epsilon}$ in the denominator. This is because it is insignificant in size to the other terms, and for the next remark).
\end{proof}
One could take this further to combine a version of Lemma \ref{PB_0k} (with a stronger barrier) with Lemma \ref{test_lem} to show that
$$\sum_{\substack{n \leqslant x \\ P(n) \leqslant y_{j^*}}}\frac{f(n)}{\sqrt{n}}$$
converges for example. An immediate question would be to find how large can the smoothness parameter in terms of $x$ before the sum diverges.  In particular, it would be of interest to sharpen the result of Lau--Tenenbaum--Wu in Lemma \ref{test_lem} in order to attain a smaller power of $\log_2(x)$ in the denominator. Significant improvements would come by choosing a less sparse set of test points $X_\ell$ for example (which we choose not to do here).
\subsection{Decomposing the $S_{i,j}$} \label{sum_decomp}
This section is where we manipulate the decomposition into several subquantites to bound individually. We follow Caich and Mastrostefano. The main improvements we gain over Hardy's approach is that we are considering the quantity
$$ S_{i,j} := \sum_{y_{j-1} < p \leqslant y_j}\frac{f(p)}{\sqrt{p}}\sum_{\substack{n \leqslant x_i/p\\P(n) < p}}\frac{f(n)}{\sqrt{n}}$$
which enjoys some extra martingale structure compared to his $S_{i,j}$ (by splitting on individual primes as opposed to splitting only on ranges of primes). We can do this because the Rademacher random multiplicative functions are supported only on squarefree integers, so there is always a unique largest prime factor to factor out using the multiplicativity of the $f(p)$.\par
Given that $f$ is a Rademacher multiplicative function and $(\mathcal{F}_p)_p $ is the $\sigma$-algebra generated by the random variables $f(q)$ where $q < p$, then we have that
$$\E\left(f(p)\sum_{\substack{n \leqslant x_i/p \\ P(n) < p}}\frac{f(n)}{\sqrt{n}} \bigg| \mathcal{F}_{p}\right) = 0.$$
In particular, this shows that $S_{i,j}$ is a sum of martingale differences with variance
\begin{equation*}
    V_\ell(x_i,y_j;f) := \sum_{y_{j-1} < p \leqslant y_j}\frac{1}{p}\left|\sum_{\substack{n \leqslant x_i/p \\ P(n) < p}}\frac{f(n)}{\sqrt{n}}\right|^2.
\end{equation*}
Applying Proposition \ref{Caich_AH_in} reduces bounding the $S_{i,j}$ to understanding $V_\ell(x_i,y_j)$. This is beneficial since it allows us to take a much lower moment than what is taken in Lau--Tenenbaum--Wu and Hardy's work for example. What also helps us improve in the accuracy of this section is to treat the different ranges of $j$ separately as opposed to dealing with them simultaneously. The sparser choice of the $X_\ell$ will allow us to is will also allow us to take advantage of Harper's better than squareroot cancellation results to a fuller extent as well using this sparser choice of $X_\ell$. Throughout this section we have the restriction on $j$ such that $j^* < j \leqslant J$ since we have already handled the case with the smaller $y_j$.\par
We let $\mathcal{X}$ be some large real number which will be chosen later (we will choose $\log(\mathcal{X}) \asymp \ell^K$ in a future section), $p$ be a prime with $p < t < p(1+1/\mathcal{X})$. Then we can upper bound $V_\ell(x_i,y_j;f)$ using the triangle inequality
$$V_\ell(x_i,y_j;f) \leqslant 2\mathcal{C}_\ell(x_i,y_j;f) + 2\mathcal{D}_\ell(x_i,y_j;f)$$
where 
\begin{align}
    \mathcal{C}_\ell(x_i,y_j;f) :=& \sum_{y_{j-1} < p \leqslant y_j}\frac{\mathcal{X}}{p^2}\int_{p}^{p(1+1/\mathcal{X})}\left|\sum_{\substack{n \leqslant x_i/t \\ P(n) <p}}\frac{f(n)}{\sqrt{n}}\right|^2 dt ;\label{C_def} \\
    \mathcal{D}_\ell(x_i,y_j;f) :=& \sum_{y_{j-1} < p \leqslant y_j}\frac{\mathcal{X}}{p^2}\int_{p}^{p(1+1/\mathcal{X})}\left|\sum_{\substack{x_i/t <n \leqslant x_i/p\\P(n)<p}}\frac{f(n)}{\sqrt{n}}\right|^2 dt. \label{D_def}
\end{align}
The main contribution will come from the $\mathcal{C}_\ell(x_i,y_j;f)$ term, so we begin there. Following Caich's decomposition, we have after applying Fubini--Tonelli
$$\mathcal{C}_\ell(x_i,y_j;f) \ll \mathcal{C}_\ell^{(1)}(x_i,y_j;f) + \mathcal{C}_\ell^{(2)}(x_i,y_j;f)$$
where
\begin{align}
    \mathcal{C}^{(1)}_\ell(x_i,y_j;f) :=& x_i\int_{x_i/y_{j}}^{x_i/y_{j-1}}\sum_{\max\left\{\frac{x_i}{z(1+1/\mathcal{X})},y_{j-1}\right\} < p \leqslant \frac{x_i}{z}}\frac{\mathcal{X}}{p^2}\left|\sum_{\substack{n \leqslant z\\P(n) < p}}\frac{f(n)}{\sqrt{n}}\right|^2 \frac{dz}{z^2} \label{C_1_def};\\
    \mathcal{C}_{\ell}^{(2)}(x_i,y_j;f) :=& x_i\int_{\frac{x_i}{y_j(1+1/\mathcal{X})}}^{x_i/y_j}\sum_{\max\left\{\frac{x_i}{z(1+1/\mathcal{X})},y_{j-1}\right\} < p \leqslant \min\left\{\frac{x_i}{z},y_j\right\}}\frac{\mathcal{X}}{p^2}\left|\sum_{\substack{n \leqslant z \\P(n) < p}}\frac{f(n)}{\sqrt{n}}\right|^2 \frac{dz}{z^2}. \label{C_2_def}
\end{align}
after we made the variable change $z=x_i/t$. Applying Mertens Theorem with Abel summation, we have that
\begin{equation}
    \sum_{\frac{x_i}{z(1+1/\mathcal{X})} < p \leqslant x_i/z}\frac{\mathcal{X}}{p^2} \ll \frac{z}{x_i\log(x_i/z)}. \label{p_bound}
\end{equation}
This can be further refined by observing $y_{j-1} \leqslant x_i/z \leqslant y_j$ and $\log(y_j) = e^{1/\ell}\log(y_{j-1})$, so $\log(x_i/z) \asymp \log(y_j)$. In particular, we have that
$$\mathcal{C}^{(1)}_\ell(x_i,y_j;f) \ll \frac{1}{\log(y_j)}\int_{x_i/y_{j}}^{x_i/y_{j-1}}\sup_{\frac{x_i}{z(1+1/\mathcal{X})} < q \leqslant \frac{x_i}{z}}\left|\sum_{\substack{n \leqslant z\\P(n) < q}}\frac{f(n)}{\sqrt{n}}\right|^2\frac{dz}{z}.$$
Following Caich, we are now in a position to define the following quantities:
\begin{align}
    \mathcal{Q}_\ell^{(1)}(x_i,y_j;f) :=& \frac{1}{\log(y_j)}\int_{x_i/y_{j}}^{x_i/y_{j-1}}\left|\sum_{\substack{n \leqslant z \\ P(n) < \frac{x_i}{z}}}\frac{f(n)}{\sqrt{n}}\right|^2 \frac{dz}{z} \label{Q_1}; \\
    \mathcal{Q}_\ell^{(2)}(x_i,y_j;f) :=& \frac{1}{\log(y_j)}\int_{x_i/y_{j}}^{x_i/y_{j-1}}\sup_{\frac{x_i}{z(1+1/\mathcal{X})} \leqslant q \leqslant \frac{x_i}{z}}\left|\sum_{\substack{n \leqslant z \\ \frac{x_i}{z(1+1/\mathcal{X})} \leqslant P(n)  < q}}\frac{f(n)}{\sqrt{n}}\right|^2 \frac{dz}{z} \label{Q_2}; \\
    \mathcal{Q}_\ell^{(3)}(x_i,y_j;f) :=& \frac{1}{\log(y_j)}\int_{x_i/y_{j}}^{x_i/y_{j-1}}\left|\sum_{\substack{n \leqslant z \\ \frac{x_i}{z(1+1/\mathcal{X})} \leqslant P(n) < \frac{x_i}{z}}}\frac{f(n)}{\sqrt{n}}\right|^2 \frac{dz}{z}. \label{Q_3}
\end{align}
\section{Bounding the complement events} \label{easy_bounds}
In this section we will show that various complement events relating to the quantities in the previous section are summable in $\ell$. Let $T(\ell) \geqslant \ell^{10}$. Then we define the events
\begin{align}
    \mathcal{D}_\ell :=& \ \left\{\sup_{X_{\ell-1} < x_i \leqslant X_\ell}\sum_{j^* < j \leqslant J}\mathcal{D}_\ell(x_i,y_j;f) > \frac{T(\ell)}{\ell^{K/2}}\right\}; \\
    \mathcal{C}_\ell^{(2)} :=& \ \left\{\sup_{X_{\ell-1} < x_i \leqslant X_\ell}\sum_{j^* < j \leqslant J}\mathcal{C}_\ell^{(2)}(x_i,y_j;f) > \frac{T(\ell)}{\ell^{K/2}}\right\}; \\
    \mathcal{Q}_\ell^{(1)} :=& \ \left\{\sup_{X_{\ell-1} < x_i \leqslant X_\ell} \sup_{j^* < j \leqslant J}\mathcal{Q}_\ell^{(1)}(x_i,y_j;f) > \frac{T(\ell)\ell^{K/2}}{\ell\log(\ell)}\right\}; \\
    \mathcal{Q}_\ell^{(2)} :=& \ \left\{\sup_{X_{\ell-1} < x_i \leqslant X_\ell} \sup_{j^* < j \leqslant J}\mathcal{Q}_\ell^{(2)}(x_i,y_j;f) > \frac{T(\ell)}{\ell^{K/2}\ell\log(\ell)}\right\}; \\
    \mathcal{Q}_\ell^{(3)} :=& \ \left\{\sup_{X_{\ell-1} < x_i \leqslant X_\ell} \sup_{j^* < j \leqslant J}\mathcal{Q}_\ell^{(3)}(x_i,y_j;f) > \frac{T(\ell)}{\ell^{K/2}\ell\log(\ell)}\right\}.
\end{align}
We will show that all of these events occur with a probability which is summable in $\ell$. Then we will apply the first Borel--Cantelli lemma to deduce that these events occur finitely many times. After this, we will almost be done with this proof, with the last thing to do being applying Proposition \ref{Caich_AH_in} to handle the good event.\par
We will leave the analysis of the $\mathcal{Q}_\ell^{(k)}$ for last since these will need some extra tools to deal with this event (with $\mathcal{Q}_\ell^{(1)}$ requiring the most work and demanding multiple sections to address since we split it up many times). All the complement events other than $\mathcal{Q}_\ell^{(1)}$ are sharp enough for a stronger upper bound to be proved, and there is only one facet of $\mathcal{Q}_\ell^{(1)}$ that needs to be improved in order to lower the upper bound to a smaller power of $\log_2(x)$. This will be discussed in more detail when we deal with that event. We start with bounding the probability of $\mathcal{D}_\ell$. All of these proofs are very similar to the ones given by Caich in his paper with a few tweaks.\par
First we show that $\mathcal{D}_\ell$ occurs only finitely many times (the reason for this order will become apparent when bounding the $\mathcal{C}_\ell^{(k)}$). 
\begin{lemma}
    The sum $\sum_{\ell \geqslant 1}\Prob(\mathcal{D}_\ell)$ converges.
    \label{D_conv}
\end{lemma}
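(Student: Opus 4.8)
The plan is to show that $\sum_{\ell\ge 1}\Prob(\mathcal{D}_\ell)<\infty$ and then apply the first Borel--Cantelli lemma; the argument runs parallel to Caich's treatment of the analogous error term in \cite{caich23}. The one point requiring care is that the interval $(X_{\ell-1},X_\ell]$ contains roughly $2^{\ell^K/\gamma}$ test points $x_i$, so a naive union bound over them followed by Chebyshev's inequality is only affordable if the quantity being estimated is extremely small on average. I would therefore first split, for each $j$ with $j^*<j\le J$, the quantity $\mathcal{D}_\ell(x_i,y_j;f)$ into the contribution $\mathcal{D}_\ell^{\mathrm{big}}(x_i,y_j;f)$ of the primes $p>x_i/\mathcal{X}$ and the contribution $\mathcal{D}_\ell^{\mathrm{small}}(x_i,y_j;f)$ of the primes $p\le x_i/\mathcal{X}$, and treat the two parts differently.

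For the big primes nothing random remains. If $p>x_i/\mathcal{X}$ and $p<t<p(1+1/\mathcal{X})$, the window $(x_i/t,x_i/p]$ contains $O(1)$ integers, each of size $\asymp x_i/p$, so $\left|\sum_{x_i/t<n\le x_i/p}\frac{f(n)}{\sqrt n}\right|^2\ll p/x_i\ll 1$ pointwise in $f$. Feeding this into the definition of $\mathcal{D}_\ell$, and recalling that the contribution vanishes unless $p\le x_i$, we get $\mathcal{D}_\ell^{\mathrm{big}}(x_i,y_j;f)\ll\sum_{y_{j-1}<p\le y_j}\frac{\mathcal{X}}{p^2}\int_p^{p(1+1/\mathcal{X})}\frac{p}{x_i}\,dt\ll\frac{1}{x_i}\bigl(\pi(y_j)-\pi(y_{j-1})\bigr)$; summing over $j^*<j\le J$ (where the primes involved all lie in $(y_{j^*},x_i]$) gives $\sum_{j^*<j\le J}\mathcal{D}_\ell^{\mathrm{big}}(x_i,y_j;f)\ll\pi(x_i)/x_i\ll 1/\log x_i\le 2^{-(\ell-1)^K}$, uniformly over $x_i\in(X_{\ell-1},X_\ell]$. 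Since $T(\ell)\ge\ell^{10}$, this is eventually smaller than $\frac{1}{2}\,T(\ell)/\ell^{K/2}$, so for all large $\ell$ the event $\mathcal{D}_\ell$ is contained in $\bigl\{\sup_{X_{\ell-1}<x_i\le X_\ell}\sum_{j^*<j\le J}\mathcal{D}_\ell^{\mathrm{small}}(x_i,y_j;f)>\frac{1}{2}\,T(\ell)/\ell^{K/2}\bigr\}$.

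For the small primes I would take expectations and use the orthogonality relation for Rademacher multiplicative functions ($\E(f(m)f(n))$ equals $\mu(n)^2$ when $m=n$ and $0$ otherwise) to get $\E\left|\sum_{x_i/t<n\le x_i/p}\frac{f(n)}{\sqrt n}\right|^2\le\sum_{x_i/t<n\le x_i/p}\frac{1}{n}$. As $p\le x_i/\mathcal{X}$, the window sits well away from the origin, so this sum is $\log(t/p)+O(p/x_i)\ll 1/\mathcal{X}$ with no boundary loss, whence $\E\,\mathcal{D}_\ell^{\mathrm{small}}(x_i,y_j;f)\ll\frac{1}{\mathcal{X}}\sum_{y_{j-1}<p\le y_j}\frac{1}{p}$. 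Mertens' theorem together with $\log y_j=e^{1/\ell}\log y_{j-1}$ gives $\sum_{y_{j-1}<p\le y_j}\frac{1}{p}\asymp 1/\ell$, and there are $\ll\ell\log\ell$ indices $j$ with $j^*<j\le J$, so $\E\sum_{j^*<j\le J}\mathcal{D}_\ell^{\mathrm{small}}(x_i,y_j;f)\ll\log\ell/\mathcal{X}$. A union bound over the $\ll 2^{\ell^K/\gamma}$ test points and Chebyshev's inequality then give $\Prob(\mathcal{D}_\ell)\ll\frac{\ell^{K/2}\log\ell}{T(\ell)}\cdot\frac{2^{\ell^K/\gamma}}{\mathcal{X}}$. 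Provided $\log\mathcal{X}$ is a sufficiently large constant multiple of $\ell^K$, with the constant exceeding $(\ln 2)/\gamma$ (which we are free to arrange, as $\mathcal{X}$ is only pinned down later), the factor $2^{\ell^K/\gamma}/\mathcal{X}$ is $\ll\exp(-c\ell^K)$ for some $c>0$, which dominates the polynomial factors, so $\sum_\ell\Prob(\mathcal{D}_\ell)$ converges.

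The only real obstacle is the tension flagged at the outset: there are far too many test points for a blunt union bound, and the purpose of the big/small split is exactly to dispose of the unavoidable $1/\log x_i$ boundary term deterministically so that it never passes through the union bound. It is also worth noting that only second moments are used for this term, so Proposition \ref{prop_hyper} is not needed here, in contrast with $\mathcal{C}_\ell^{(2)}$ and the $\mathcal{Q}_\ell^{(k)}$ treated afterwards. Once $\sum_\ell\Prob(\mathcal{D}_\ell)<\infty$ is established, the first Borel--Cantelli lemma shows that $\mathcal{D}_\ell$ occurs for only finitely many $\ell$.
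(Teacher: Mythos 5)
Your argument is correct, and it takes a genuinely different and somewhat cleaner route than the paper's. The paper applies Markov's inequality to the $r$-th moment of $\sum_{j}\mathcal{D}_\ell(x_i,y_j;f)$, then uses Minkowski, H\"older, the hypercontractive bound of Proposition \ref{prop_hyper}, a further Cauchy--Schwarz, and divisor-function estimates of the form $(\tau_{2r-1}(n))^2 \leqslant \tau_{4r^2-4r+1}(n)$, so that after choosing $\mathcal{X} = (\log x_i)^{4r^2-4r+2}$ and $r > 1/\gamma$ the contribution per test point is roughly $(\ell^{K/2}/\log x_i)^r$, beating the $\ll (\log X_\ell)^{1/\gamma}$ union bound. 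You instead observe that the ``boundary'' contribution from primes $p > x_i/\mathcal{X}$ (the source of the $1/\log x_i$ term in the paper) is \emph{deterministically} tiny --- at most one integer lies in the window, each of size $\asymp x_i/p$, giving $\ll 1/\log x_i \leqslant 2^{-(\ell-1)^K}$ overall, far below the threshold $T(\ell)/\ell^{K/2}$ --- and so never needs to enter the union bound at all. That leaves only the small-prime part, for which plain orthogonality (first moment of the squared sum) and Mertens suffice: $\E\sum_j \mathcal{D}_\ell^{\mathrm{small}} \ll \log\ell/\mathcal{X}$, and the union bound then costs a factor $\ll 2^{\ell^K/\gamma}$ which is absorbed by taking $\log\mathcal{X}$ a sufficiently large constant multiple of $\ell^K$. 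Your approach uses no hypercontractivity, no high moments, and no divisor-function manipulations for this particular event; in exchange, it leans more explicitly on being allowed to take $\mathcal{X}$ this large. That demand is in fact met by the paper's own choice: with $r > 1/\gamma$ one has $\log\mathcal{X} = (4r^2-4r+2)\log_2(x_i)\cdot\ln 2 \asymp (4r^2-4r+2)\ell^K\ln 2$ and $4r^2-4r+2 > 1/\gamma$, so the constant exceeds the $(\ln 2)/\gamma$ threshold you require. Two minor caveats: $\mathcal{X}$ is fixed inside the paper's proof of \emph{this} lemma (not later), although it is indeed chosen with the subsequent lemmas in mind, so your parenthetical remark is slightly off in letter but right in spirit; and what you call ``Chebyshev'' is Markov applied to the first moment of the nonnegative quantity $\mathcal{D}_\ell^{\mathrm{small}}$, but since that quantity is built from squared partial sums the name is defensible.
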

\begin{proof}
Let $r \geqslant 1$ be a constant to be chosen later. Then we can apply the union bound and Markov's inequality to the $r$-th power followed by Minkowski's inequality to see
\begin{align*}
    \Prob(\mathcal{D}_\ell) \leqslant& \frac{1}{T(\ell)^r}\sum_{X_{\ell-1} < x_i \leqslant X_\ell}\left(\sum_{j^* < j \leqslant J}\ell^{K/2}\E\left[\mathcal{D}_\ell(x_i,y_j;f)\right]\right)^r \\
    \ll& \sum_{X_{\ell-1} < x_i \leqslant X_\ell}\ell^{Kr/2}\left(\sum_{1 < j \leqslant J}\left(\E\left[(\mathcal{D}_\ell(x_i,y_j;f))^r\right]\right)^{1/r}\right)^r.
\end{align*}
We next proceed with finding suitable upper bounds for $\left(\E\left[(\mathcal{D}_\ell(x_i,y_j;f))^r\right]\right)^{1/r}$. We follow the approach of Harper \cite{harper202} and Hardy \cite{shardy23}. We have
$$(\E\left[\mathcal{D}_\ell(x_i,y_j;f)^r\right])^{1/r} \leqslant \sum_{y_{j-1} < p \leqslant y_j}\frac{1}{p}\left[\E\left(\frac{\mathcal{X}}{p}\int_{p}^{p(1+1/\mathcal{X})}\left|\sum_{\substack{\frac{x_i}{t} < n \leqslant \frac{x_i}{p}\\P(n) <p}}\frac{f(n)}{\sqrt{n}}\right|^2 dt\right)^r\right]^{1/r}.$$
Since we have normalised the integral above, we can apply H\"{o}lder's inequality,
\begin{align*}
    (\E\left[\mathcal{D}_\ell(x_i,y_j;f)^r\right])^{1/r} \leqslant \sum_{y_{j-1} < p \leqslant y_j}\frac{1}{p}\left(\frac{\mathcal{X}}{p}\int_{p}^{p(1+1/\mathcal{X})}\E\left(\left|\sum_{\substack{\frac{x_i}{t} < n \leqslant \frac{x_i}{p} \\ P(n) <p}}\frac{f(n)}{\sqrt{n}}\right|^{2r}\right) dt\right)^{1/r}.
\end{align*}
We split the above integral according to $p > \frac{x_i}{1+\mathcal{X}}$. In the first case, we find that $\frac{x_i}{p}-\frac{x_i}{p(1+1/\mathcal{X})} < 1$, so the inner sum has at most one term. This gives that for this range
$$\frac{\mathcal{X}}{p}\int_{p}^{p(1+1/\mathcal{X})}\E\left(\left|\sum_{\substack{\frac{x_i}{t} < n \leqslant \frac{x_i}{p}\\P(n) < p}}\frac{f(n)}{\sqrt{n}}\right|^{2r}\right)dt \ll \frac{\mathcal{X}}{p}\int_{p}^{p(1+1/\mathcal{X})}\left(\frac{t}{x_i}\right)^{r}dt \ll \left(\frac{p}{x_i}\right)^r.$$
Summing over the $p>\frac{x_i}{1+\mathcal{X}}$ and using Abel summation with the prime number theorem we have
$$\frac{1}{x_i}\sum_{\frac{x_i}{1+\mathcal{X}} < p \leqslant x_i}1 \ll \frac{1}{\log(x_i)}.$$
For the range $p \leqslant \frac{x_i}{1+\mathcal{X}}$, then we need to proceed slightly differently. We follow Hardy's treatment of this problem \cite{shardy23}, where we use Proposition \ref{prop_hyper} followed by Cauchy--Schwarz. For the first step, we have
$$\E\left(\left|\sum_{\substack{\frac{x_i}{t} < n \leqslant \frac{x_i}{p}\\P(n) < p}}\frac{f(n)}{\sqrt{n}}\right|^{2r}\right) \leqslant \left(\sum_{\substack{\frac{x_i}{t} < n \leqslant \frac{x_i}{p}\\P(n) <p}}\frac{\tau_{2r-1}(n)}{n}\right)^{r}.$$
Applying Cauchy--Schwarz, we find
$$\left(\sum_{\substack{\frac{x_i}{t} < n \leqslant \frac{x_i}{p}\\P(n) <p}}\frac{\tau_{2r-1}(n)}{n}\right)^{r} \leqslant \left(\left(\sum_{\substack{\frac{x_i}{t} < n \leqslant \frac{x_i}{p}\\P(n)<p}}\frac{1}{n^2}\right)\left(\sum_{\substack{\frac{x_i}{t} < n \leqslant \frac{x_i}{p}\\P(n) < p}}\left(\tau_{2r-1}(n)\right)^2\right)\right)^{r/2}.$$
We have
$$\left(\sum_{\substack{\frac{x_i}{t} < n \leqslant \frac{x_i}{p}\\P(n) <p}}\frac{1}{n^2}\right)^{r/2} \ll \left(\sum_{\frac{x_i}{p(1+1/X)} < n \leqslant \frac{x_i}{p}}\frac{1}{n^2}\right)^{r/2} \ll \left(\frac{p}{\mathcal{X} x_i}\right)^{r/2}.$$
The second term we apply Cauchy--Schwarz to this sum and observe that $(\tau_{2r-1}(n))^{2} \leqslant \tau_{4r^2-4r+1}(n)$ (as seen in the paper of Benatar--Nishry--Rodgers \cite{benatar22}),
$$\left(\sum_{\substack{\frac{x_i}{t} < n \leqslant \frac{x_i}{p}\\P(n) < p}}(\tau_{2r-1}(n))^2\right)^{r/2} \leqslant \left(\sum_{\substack{\frac{x_i}{t} < n \leqslant \frac{x_i}{p}\\P(n) < p}}\tau_{4r^2-4r+1}(n)\right)^{r/2} \ll \left(\frac{x_i(\log(x_i))^{4r^2-4r}}{p}\right)^{r/2}.$$
Putting this all together, we have that for $p \leqslant \frac{x_i}{1+\mathcal{X}}$
$$\E\left(\left|\sum_{\substack{\frac{x_i}{t} < n \leqslant \frac{x_i}{p}\\P(n)<p}}\frac{f(n)}{\sqrt{n}}\right|^{2r}\right) \ll \left(\frac{(\log(x_i))^{4r^2-4r}}{\mathcal{X}}\right)^{r/2}.$$
This leads to us having
$$\left(\E\left[(\mathcal{D}_\ell(x_i,y_j;f)^r\right]\right)^{1/r} \ll \sum_{y_{j-1}<p \leqslant y_j}\frac{1}{p}\frac{(\log(x_i))^{2r^2-2r}}{\sqrt{\mathcal{X}}} + \frac{1}{\log(x_i)}.$$
This can be substituted into the initial bound on $\Prob(\mathcal{D}_\ell)$;
$$\Prob(\mathcal{D}_\ell) \ll \sum_{X_{\ell-1} < x_i \leqslant X_\ell}\left(\sum_{p \leqslant \frac{x_i}{1+\mathcal{X}}}\frac{1}{p}\frac{\ell^{K/2}(\log(x_i))^{2r^2-2r}}{\sqrt{\mathcal{X}}}\right)^{r} + \sum_{X_{\ell-1} < x_i \leqslant X_\ell}\left(\frac{\ell^{K/2}}{\log(x_i)}\right)^r.$$
We then choose $\mathcal{X} = (\log(x_i))^{4r^2-4r+2}$. With this choice, we have
$$\Prob(\mathcal{D}_\ell) \ll \sum_{X_{\ell-1} < x_i \leqslant X_\ell} (\log_2(x_i))^{r/2}\left(\frac{\ell^{K/2}}{\log(x_i)}\right)^r.$$
Finally, after choosing $r > 1/\gamma$ where $\gamma$ was chosen in Lemma \ref{test_lem}, we have that $\sum_{\ell \geqslant 1}\Prob(\mathcal{D}_\ell)$ is certainly summable in $\ell$.
\end{proof}
The next lemma is also rather easy to obtain using Markov's inequality and partial summation.
\begin{lemma}
    The sum $\sum_{\ell \geqslant 1}\Prob\left(\mathcal{C}_\ell^{(2)}\right)$ converges. \label{C_2conv}
\end{lemma}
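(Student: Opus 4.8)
The plan is to follow the proof of Lemma~\ref{D_conv} verbatim, except that here the first moment already suffices, since the $z$-integral defining $\mathcal{C}_\ell^{(2)}(x_i,y_j;f)$ runs over an interval of length only $\asymp x_i/(y_j\mathcal{X})$, so that $\E[\mathcal{C}_\ell^{(2)}(x_i,y_j;f)]$ is as small as $1/\mathcal{X}$. First I would apply the union bound over the test points $X_{\ell-1}<x_i\leqslant X_\ell$ followed by Markov's inequality, obtaining
$$\Prob\left(\mathcal{C}_\ell^{(2)}\right)\leqslant\frac{\ell^{K/2}}{T(\ell)}\sum_{X_{\ell-1}<x_i\leqslant X_\ell}\ \sum_{j^*<j\leqslant J}\E\left[\mathcal{C}_\ell^{(2)}(x_i,y_j;f)\right],$$
so everything reduces to estimating $\E[\mathcal{C}_\ell^{(2)}(x_i,y_j;f)]$.

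For this I would take expectations inside the sum over $p$ and the integral in (\ref{C_2_def}). Since $f$ is real Rademacher the cross terms vanish, and $\E\bigl|\sum_{n\leqslant z,\,P(n)<p}f(n)/\sqrt{n}\bigr|^2=\sum_{n\leqslant z,\,P(n)<p,\ n\text{ squarefree}}1/n\leqslant\prod_{q<p}(1-1/q)^{-1}\ll\log p\leqslant\log(x_i/z)$, the last step because $p\leqslant\min\{x_i/z,y_j\}\leqslant x_i/z$ on the relevant range. Bounding the sum over $p$ by (\ref{p_bound}) (whose range of summation contains the one occurring in $\mathcal{C}_\ell^{(2)}$), namely $\sum_{x_i/(z(1+1/\mathcal{X}))<p\leqslant x_i/z}\mathcal{X}/p^2\ll z/(x_i\log(x_i/z))$, the two factors of $\log(x_i/z)$ cancel and
$$\E\left[\mathcal{C}_\ell^{(2)}(x_i,y_j;f)\right]\ll x_i\int_{\frac{x_i}{y_j(1+1/\mathcal{X})}}^{x_i/y_j}\frac{z}{x_i}\cdot\frac{\dd z}{z^2}=\int_{\frac{x_i}{y_j(1+1/\mathcal{X})}}^{x_i/y_j}\frac{\dd z}{z}=\log(1+1/\mathcal{X})\ll\frac{1}{\mathcal{X}}.$$
Since there are only $\ll\ell\log(\ell)$ indices $j$ with $j^*<j\leqslant J$, summing over $j$ gives $\sum_{j^*<j\leqslant J}\E[\mathcal{C}_\ell^{(2)}(x_i,y_j;f)]\ll\ell\log(\ell)/\mathcal{X}$.

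Finally I would assemble the bound. The number of test points $x_i$ in $(X_{\ell-1},X_\ell]$ is $\ll(\log X_\ell)^{1/\gamma}=2^{\ell^K/\gamma}$, while on this range $\mathcal{X}=(\log x_i)^{C_0}\geqslant(\log X_{\ell-1})^{C_0}=2^{C_0(\ell-1)^K}$, where $C_0=4r^2-4r+2>r>1/\gamma$ is the exponent fixed in the proof of Lemma~\ref{D_conv}; together with $T(\ell)\geqslant\ell^{10}$ this gives
$$\Prob\left(\mathcal{C}_\ell^{(2)}\right)\ll\frac{\ell^{K/2}\cdot\ell\log(\ell)}{\ell^{10}}\cdot 2^{\ell^K/\gamma}\cdot 2^{-C_0(\ell-1)^K}\ll\ell^{K}\,2^{\ell^K/\gamma-C_0(\ell-1)^K}.$$
Expanding $(\ell-1)^K$ and using $C_0>1/\gamma$ shows the exponent is $\leqslant-c\,\ell^K$ for some $c>0$ and all large $\ell$, so the right-hand side is summable in $\ell$, as required. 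The one point needing care is precisely this last comparison: the test points are super-exponentially numerous in $\ell$, so one must check that the $1/\mathcal{X}$ decay of the expectations beats them — but this is already guaranteed by the fixed choice of $\mathcal{X}$, and there is no genuine obstacle beyond this bookkeeping.
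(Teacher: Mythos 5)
Your proposal is correct and follows essentially the same route as the paper's proof: apply the union bound and Markov's inequality, use the squarefree-support orthogonality (equivalently the paper's bound $\sum_{P(n)\leqslant z}1/n\ll\log z$) to get a factor $\log(x_i/z)$ from the inner second moment, observe that this cancels against the $1/\log(x_i/z)$ coming from the prime sum bound~(\ref{p_bound}), and integrate to produce the $1/\mathcal{X}$ saving that beats the number of test points. The paper leaves the final ``sufficiently large $\mathcal{X}$ beats the count of $x_i$'' step implicit, whereas you spell out the bookkeeping (number of test points $\ll 2^{\ell^K/\gamma}$, $\mathcal{X}\geqslant(\log X_{\ell-1})^{C_0}$, and $C_0>1/\gamma$); this is a useful clarification but not a different argument.
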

\begin{proof}
We proceed by applying Markov's inequality and partial summation to equation \ref{p_bound} to find
$$\Prob\left(\mathcal{C}_\ell^{(2)}\right) \leqslant \frac{\ell^{K/2}}{T(\ell)}\sum_{X_{\ell-1} < x_i \leqslant X_\ell} \sum_{j^* < j \leqslant J}x_i \int_{\frac{x_i}{y_j(1+1/\mathcal{X})}}^{x_i/y_j} \sum_{\max\{y_{j-1},\frac{x_i}{z(1+1/\mathcal{X})}\} < p \leqslant \frac{x_i}{z}}\frac{\mathcal{X}\log(p)}{p^2}\frac{dz}{z^2},$$
using the bound
$$\sum_{\substack{n \leqslant x \\ P(n) \leqslant z}}\frac{1}{n} \leqslant \sum_{\substack{n \geqslant 1 \\P(n) \leqslant z}}\frac{1}{n} = \prod_{p \leqslant z}\left(1-\frac{1}{p}\right)^{-1} \ll \log(z).$$Similarly to equation (\ref{p_bound}), we have that since $\frac{x_i}{z(1+1/\mathcal{X})} \leqslant \max\{\frac{x_i}{z(1+1/\mathcal{X})},y_{j-1}\}$,
$$\Prob\left(\mathcal{C}_\ell^{(2)}\right) \ll \frac{\ell^{K/2}}{T(\ell)}\sum_{X_{\ell-1} < x_i \leqslant X_\ell}\sum_{j^* < j \leqslant J}\int_{\frac{x_i}{y_j(1+1/\mathcal{X})}}^{x_i/y_j}\frac{dz}{z}.$$
We find that
$$\Prob\left(\mathcal{C}_\ell^{(2)}\right) \ll \frac{\ell^{K/2}}{\mathcal{X}T(\ell)}\sum_{X_{\ell-1} < x_i \leqslant X_\ell} \sum_{j^* < j \leqslant J} 1.$$
Using our choice of $\mathcal{X}$ for $i$ sufficiently large when bounding $\Prob(\mathcal{D}_\ell)$, this is sufficient to prove that $\Prob\left(\mathcal{C}_\ell^{(2)}\right)$ is summable in $\ell$.
\end{proof}
This leaves the events $\mathcal{Q}_\ell^{(k)}$ to bound. These will require the martingale techniques mentioned before to deal with. We start with handling $\Prob\left(\mathcal{Q}_\ell^{(2)}\right)$ and $\Prob\left(\mathcal{Q}_\ell^{(3)}\right)$ since they both involve very similar methods. We next prove the following lemma.
\begin{lemma}
    For $z \geqslant 1$ and $q_0$ some positive integer, let
    \begin{equation}
        Y_{q_0,q}(z) : = \sum_{\substack{n \leqslant z\\q_0 \leqslant P(n) <q}}\frac{f(n)}{\sqrt{n}}. \label{Y_Def}
    \end{equation}
    Then $(|Y_{q_0,q}(z)|)_{q}$ is a submartingale with respect to filtration $(\mathcal{F}_q)$. \label{Y_sub}
\end{lemma}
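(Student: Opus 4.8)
The plan is to prove the slightly stronger statement that $(Y_{q_0,q}(z))_q$ is itself a (complex) martingale with respect to $(\mathcal{F}_q)$; the asserted submartingale property for $(|Y_{q_0,q}(z)|)_q$ then follows at once from the conditional Jensen inequality applied to the convex map $x \mapsto |x|$. Integrability is not an issue, since for fixed $z$ each $Y_{q_0,q}(z)$ is a finite sum over $n \leqslant z$ and hence bounded. Measurability of $Y_{q_0,q}(z)$ with respect to $\mathcal{F}_q$ is immediate: every $n$ occurring in the sum has $P(n) < q$, so $f(n)$ is a polynomial in the variables $f(p)$ with $p < q$.

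For the martingale identity, fix $q < q'$ and write
$$Y_{q_0,q'}(z) - Y_{q_0,q}(z) = \sum_{\substack{n \leqslant z \\ \max\{q_0,q\} \leqslant P(n) < q'}} \frac{f(n)}{\sqrt{n}}.$$
Because $f$ is supported on the squarefree integers, each $n$ contributing here factors uniquely as $n = mp$ with $p = P(n)$ and $P(m) < p$, and then $f(n) = f(m)f(p)$. Grouping by the value of $p$ gives
$$Y_{q_0,q'}(z) - Y_{q_0,q}(z) = \sum_{\max\{q_0,q\} \leqslant p < q'} \frac{f(p)}{\sqrt{p}} \sum_{\substack{m \leqslant z/p \\ P(m) < p}} \frac{f(m)}{\sqrt{m}},$$
which is exactly the martingale-difference device already used for the $S_{i,j}$ in Section \ref{sum_decomp}. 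Now I condition on $\mathcal{F}_q$ term by term: for each $p$ in the outer sum one has $q \leqslant p$, hence $\mathcal{F}_q \subseteq \mathcal{F}_p$; the inner sum is $\mathcal{F}_p$-measurable (all prime factors of $m$ are $< p$); and $f(p)$ is independent of $\mathcal{F}_p$ with mean $0$. By the tower property,
$$\E\left(\frac{f(p)}{\sqrt{p}}\sum_{\substack{m \leqslant z/p \\ P(m) < p}}\frac{f(m)}{\sqrt{m}}\,\bigg|\,\mathcal{F}_q\right) = \E\left(\frac{1}{\sqrt{p}}\left(\sum_{\substack{m \leqslant z/p \\ P(m) < p}}\frac{f(m)}{\sqrt{m}}\right)\E\big(f(p)\,\big|\,\mathcal{F}_p\big)\,\bigg|\,\mathcal{F}_q\right) = 0.$$
Summing over $p$ yields $\E(Y_{q_0,q'}(z)\mid\mathcal{F}_q) = Y_{q_0,q}(z)$ (and when $q < q_0$ we have $Y_{q_0,q}(z)=0$ and the same computation applies with the outer sum starting at $q_0$, so nothing changes). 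Conditional Jensen then gives $\E(|Y_{q_0,q'}(z)|\mid\mathcal{F}_q) \geqslant |Y_{q_0,q}(z)|$, completing the proof.

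There is essentially no obstacle here: the only point requiring a moment's care is the unique factorisation $n = mp$ with $p = P(n)$ appearing to the first power, which is precisely where squarefree support is used, together with keeping track of the nesting $\mathcal{F}_q \subseteq \mathcal{F}_p$ so that the tower property applies. I would also remark that the index $q$ ranges over a continuum here (as in the supremum defining $\mathcal{Q}_\ell^{(2)}$), but since the argument above works verbatim for any pair $q < q'$, the continuous-parameter submartingale property holds without modification.
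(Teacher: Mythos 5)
Your proof is correct. It proves the strictly stronger statement that the complex-valued process $(Y_{q_0,q}(z))_q$ is itself a martingale, and then passes to $(|Y_{q_0,q}(z)|)_q$ by conditional Jensen with the convex map $x \mapsto |x|$. The paper instead verifies the submartingale property directly in a single step: for consecutive primes $q < p$ it writes $Y_{q_0,p}(z) = Y_{q_0,q}(z) + \frac{f(q)}{\sqrt{q}}\sum_{m \leqslant z/q,\, P(m)<q}\frac{f(m)}{\sqrt{m}}$ (the paper's displayed line has $f(p)/\sqrt p$ and the subscript $q_0$ on the inner sum, both minor slips) and then uses the explicit Rademacher average
\begin{equation*}
\E\bigl(|a+bf(q)| \,\big|\, \mathcal{F}_q\bigr) = \tfrac12\bigl(|a+b|+|a-b|\bigr) \geqslant |a|,
\end{equation*}
which is conditional Jensen specialised to a symmetric two-point law. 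Both arguments hinge on the same two facts — the unique factorisation $n = mp$ with $p = P(n)$ available because $f$ is supported on squarefree integers, and the mean-zero independence of $f(p)$ from $\mathcal{F}_p \supseteq \mathcal{F}_q$ — so the decomposition is identical. What your version buys is generality and a cleaner chaining: it handles arbitrary $q < q'$ at once via the tower property rather than iterating consecutive-prime steps, and it does not use that $f(p) = \pm 1$, so the same proof works verbatim for any mean-zero choice of $f(p)$ (in particular Steinhaus). The paper's one-step identity is slightly more self-contained for a reader who does not want to invoke conditional Jensen by name. Both are fine; yours is the more portable.
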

\begin{proof}
Let $q < p$ be two consecutive prime numbers. Then we have that
\begin{align*}
    \E(|Y_{q_0,p}(z)| \ |\mathcal{F}_p) =& \E(\left|Y_{q_0,q}(z) + \frac{f(p)}{\sqrt{p}}Y_{q_0,q}(z/p)\right| \ |  \mathcal{F}_p) \\
    =& \frac{1}{2}\left(\left|Y_{q_0,q}(z) + \frac{Y_{q_0,q}(z/p)}{\sqrt{p}}\right| + \left|Y_{q_0,q}(z) - \frac{Y_{q_0,q}(z/p)}{\sqrt{p}}\right|\right) \\
    \geqslant& |Y_{q_0,q}(z)|.
\end{align*}
\end{proof}
\begin{lemma}
    The sums $\sum_{\ell \geqslant 1}\Prob\left(\mathcal{Q}_\ell^{(2)}\right)$ and $\sum_{\ell \geqslant 1}\Prob\left(\mathcal{Q}_\ell^{(3)}\right)$ converge. \label{Q_conv}
\end{lemma}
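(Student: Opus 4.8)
The plan is to reduce both convergence statements to a single high-moment estimate. First I would note that $\mathcal{Q}_\ell^{(3)}(x_i,y_j;f) \leqslant \mathcal{Q}_\ell^{(2)}(x_i,y_j;f)$ pointwise in $f$, because taking $q = x_i/z$ in the supremum defining $\mathcal{Q}_\ell^{(2)}$ produces exactly the integrand of $\mathcal{Q}_\ell^{(3)}$; since the events $\mathcal{Q}_\ell^{(2)}$ and $\mathcal{Q}_\ell^{(3)}$ have the same threshold, this gives $\Prob(\mathcal{Q}_\ell^{(3)}) \leqslant \Prob(\mathcal{Q}_\ell^{(2)})$, so it suffices to treat $\mathcal{Q}_\ell^{(2)}$. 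I would then union bound over the test points $x_i \in (X_{\ell-1},X_\ell]$ (of which there are at most $2^{\ell^K/\gamma}$, since $i^\gamma \leqslant \log(X_\ell) = 2^{\ell^K}$) and over the indices $j^* < j \leqslant J$ (of which there are $\ll \ell\log(\ell)$), and for each fixed pair bound the tail probability by Markov's inequality applied to the $r$-th moment, with $r = r(\gamma,K)$ a sufficiently large fixed constant (at least as large as the one in the proof of Lemma \ref{D_conv}).

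The moment bound for $\mathcal{Q}_\ell^{(2)}(x_i,y_j;f)$ is the heart of the matter. I would apply H\"{o}lder's inequality in $z$ against the measure $dz/z$ on $[x_i/y_j,x_i/y_{j-1}]$, whose total mass is $\log(y_j/y_{j-1}) \asymp \log(y_j)/\ell$, to move the $r$-th power inside the integral at the cost of a factor $\ell^{-(r-1)}$; after Fubini this reduces matters to bounding $\E\big[\sup_q|Y_{q_0,q}(z)|^{2r}\big]$ with $q_0 = x_i/(z(1+1/\mathcal{X}))$, the supremum over primes $q \leqslant x_i/z$. By Lemma \ref{Y_sub} the process $(|Y_{q_0,q}(z)|)_q$ is a non-negative submartingale, so Doob's $L^{2r}$ inequality (Proposition \ref{Lr-Doob}), whose constant $(2r/(2r-1))^{2r} \leqslant 4$ is harmless, gives $\E[\sup_q|Y_{q_0,q}(z)|^{2r}] \ll \sup_q \E|Y_{q_0,q}(z)|^{2r}$, and Proposition \ref{prop_hyper} then yields $\E|Y_{q_0,q}(z)|^{2r} \leqslant \big(\sum_{n\leqslant z,\ q_0\leqslant P(n)<x_i/z}\tau_{2r-1}(n)/n\big)^r$ uniformly in such $q$.

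The decisive arithmetic input is that the primes $P(n)$ here are confined to the \emph{short} interval $[x_i/(z(1+1/\mathcal{X})),\, x_i/z)$, of relative length $1/\mathcal{X}$. Writing $n = mP(n)$ and using $\tau_{2r-1}(mP(n)) = (2r-1)\tau_{2r-1}(m)$, the bound $\sum_{m\leqslant z/p}\tau_{2r-1}(m)/m \ll (\log z)^{2r-1} \ll (\log x_i)^{2r-1}$, and Brun--Titchmarsh in the form $\sum_{p\in[A,A(1+1/\mathcal{X}))}1/p \ll 1/(\mathcal{X}\log A)$ with $A = x_i/z$ (legitimate since $A \geqslant y_{j-1}\geqslant y_0$ is vastly larger than $\mathcal{X}$) together with $\log(x_i/z) \asymp \log(y_j)$, I would obtain $\sum_n \tau_{2r-1}(n)/n \ll r(\log x_i)^{2r-1}/(\mathcal{X}\log(y_j))$. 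Choosing $\mathcal{X} = (\log x_i)^{4r^2-4r+2}$ as in the proof of Lemma \ref{D_conv} and using $4r^2-6r+3 = 2(2r-1)(r-1)+1 \geqslant 1$, this is $\ll r/(\log(x_i)\log(y_j))$, so altogether $\E[\mathcal{Q}_\ell^{(2)}(x_i,y_j;f)^r] \ll \ell^{-r}\big(Cr/(\log(x_i)\log(y_j))\big)^r$ for an absolute constant $C$.

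Finally, plugging this into Markov and the union bound, and using $\log(x_i) \geqslant \log(X_{\ell-1}) = 2^{(\ell-1)^K}$, $\log(y_j) \geqslant \log(X_{\ell-1})/\ell^{2K/\gamma}$ for $j > j^*$ (by the definition of $j^*$), $T(\ell)\geqslant \ell^{10}$, and the two counts above, I would get $\Prob(\mathcal{Q}_\ell^{(2)}) \ll \ell^{C'}\,2^{\ell^K/\gamma - 2r(\ell-1)^K}$ for some $C' = C'(r,K,\gamma)$. Since $(\ell-1)^K \geqslant \ell^K/2$ once $\ell \geqslant 2K$, fixing $r \geqslant 1+1/\gamma$ makes the exponent $\leqslant -\ell^K$ for all large $\ell$, so $\sum_\ell\Prob(\mathcal{Q}_\ell^{(2)})$ converges, and hence so does $\sum_\ell\Prob(\mathcal{Q}_\ell^{(3)})$. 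The step I expect to be most delicate is the bookkeeping between two competing scales: the power of $\log(x_i)$ that must be cancelled by the choice of $\mathcal{X}$ --- which, being a global parameter, simultaneously has to serve Lemmas \ref{D_conv} and \ref{C_2conv} --- against the doubly-exponential magnitude $2^{(\ell-1)^K}$ of $\log(X_{\ell-1})$ needed to overwhelm the $2^{\ell^K/\gamma}$ loss from the union bound over the test points; verifying that one fixed $r$ can be chosen so that both work at once is the crux.
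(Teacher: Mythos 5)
Your proposal is correct, and it arrives at the convergence by a route that is structurally close to the paper's but tuned differently in two respects. First, the reduction $\mathcal{Q}_\ell^{(3)}(x_i,y_j;f)\leqslant\mathcal{Q}_\ell^{(2)}(x_i,y_j;f)$ via the choice $q=x_i/z$ is a clean observation the paper does not make explicit; since the thresholds in the two events coincide, it yields $\mathcal{Q}_\ell^{(3)}\subseteq\mathcal{Q}_\ell^{(2)}$ and disposes of the third event for free. The paper instead proves the $\mathcal{Q}_\ell^{(3)}$ bound by rerunning the same moment computation without the Doob step, which comes to the same thing. Second, the paper applies Markov at exponent $2$ (so a fourth moment and $\tau_3$) and then Cauchy--Schwarz on the $z$-integral, whereas you take a high $2r$-th moment with $r\geqslant 1+1/\gamma$ and move it inside the $z$-integral by H\"older. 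Both get the needed $1/\mathcal{X}$ factor identically: the decisive input in either case is the prime restriction to a window of relative length $1/\mathcal{X}$, handled by factoring $n=mP(n)$ (valid for squarefree $n$, which is all that matters since $f$ vanishes off the squarefrees), bounding $\sum_{m\leqslant z/p}\tau_{2r-1}(m)/m\ll(\log x_i)^{2r-1}$, and then using the short-interval prime estimate $\sum_p 1/p\ll 1/(\mathcal{X}\log y_j)$. The paper's fourth moment already suffices because the chosen $\mathcal{X}=(\log x_i)^{4r^2-4r+2}$ (with $r>1/\gamma$ fixed to serve Lemma \ref{D_conv}) is so large that even $(\log x_i)^6/\mathcal{X}^2$ decays fast enough to defeat the $2^{\ell^K/\gamma}$ union bound; your version converts that same largeness into an $r$-th power decay and reaches the cruder but equally summable conclusion $\Prob(\mathcal{Q}_\ell^{(2)})\ll\ell^{C'}2^{-\ell^K}$. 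You were right to flag the $\mathcal{X}$-bookkeeping as the crux; the check that a single fixed $r>1/\gamma$ works for Lemmas \ref{D_conv}, \ref{C_2conv}, and here goes through because $4r^2-4r+2\gg 1/\gamma$ once $r>1/\gamma$, and your choice $r\geqslant 1+1/\gamma$ is compatible. One small imprecision: the prime window in $\mathcal{Q}_\ell^{(2)}$ is $[x_i/(z(1+1/\mathcal{X})),x_i/z)$ rather than $[A,A(1+1/\mathcal{X}))$ as written, but the Brun--Titchmarsh estimate is the same either way and the conclusion is unaffected.
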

\begin{proof}
To show $\Prob\left(\mathcal{Q}_\ell^{(2)}\right)$ is summable in $\ell$, we apply Chebychev's inequality, followed by Cauchy--Schwarz and find
\begin{align*}
    \Prob\left(\mathcal{Q}_\ell^{(2)}\right) \leqslant& \frac{\ell^{K+2}(\log(\ell))^2}{(T(\ell))^2}\sum_{X_{\ell-1} < x_i \leqslant X_\ell}\sum_{j^* < j \leqslant J}\frac{1}{(\log(y_j))^2}\left(\int_{x_i/y_j}^{x_i/y_{j-1}}\frac{dz}{z}\right) \\
    \times& \int_{x_i/y_j}^{x_i/y_{j-1}}\E\left(\sup_{\frac{x_i}{z(1+1/\mathcal{X})} \leqslant q \leqslant \frac{x_i}{z}}\left|Y_{\frac{x_i}{z(1+1/\mathcal{X})},q}(z)\right|^4\right) \frac{dz}{z}.
\end{align*}
We can now apply Doob's $L^4$ inequality (Proposition \ref{Lr-Doob} where $r=4)$, which leaves us with
$$\Prob\left(\mathcal{Q}_\ell^{(2)}\right) \ll \sum_{X_{\ell-1} < x_i \leqslant X_\ell}\sum_{j^* < j \leqslant J}\frac{1}{\log(y_j)}\int_{x_i/y_j}^{x_i/y_{j-1}}\E\left(|Y_{\frac{x_i}{z(1+1/\mathcal{X})},\frac{x_i}{z}}(z)|^4\right)\frac{dz}{z}.$$
We obtain the same bound on $\Prob\left(\mathcal{Q}_\ell^{(3)}\right)$ only using Chebychev's inequality and Cauchy--Schwarz (as there is no supremum to worry about). Now we only have to worry about bounding $\E(|Y_{\frac{x_i}{z(1+1/\mathcal{X})},\frac{x_i}{z}}(z)|^4)$. To do this, we appeal to Proposition \ref{prop_hyper} again:
\begin{align*}
    \E\left(\left|\sum_{\substack{n \leqslant z\\\frac{x_i}{z(1+1/\mathcal{X})} \leqslant P(n) < \frac{x_i}{z}}}\frac{f(n)}{\sqrt{n}}\right|^4\right) \leqslant& \left(\sum_{\substack{n \leqslant z \\ \frac{x_i}{z(1+1/\mathcal{X})} \leqslant P(n) < \frac{x_i}{z}}}\frac{\tau_3(n)}{n}\right)^2 \\
    \leqslant& \left(\sum_{\frac{x_i}{z(1+1/\mathcal{X})} < p \leqslant \frac{x_i}{z}}\frac{3}{p} \sum_{n \leqslant \frac{z}{p}} \frac{\tau_3(n)}{n}\right)^2 \\
    \ll& (\log(x_i))^6\left(\sum_{\frac{x_i}{z(1+1/\mathcal{X})} < p \leqslant \frac{x_i}{z}}\frac{1}{p}\right)^2.
\end{align*}
This is achieved by using the submultiplicativity of $\tau_3(n)$ and the simple bound
$$\sum_{n \leqslant x}\frac{\tau_3(n)}{n} \leqslant \sum_{\substack{n \geqslant 1 \\P(n) \leqslant x}}\frac{\tau_3(n)}{n} \ll \prod_{4 < p \leqslant x}\left(1-\frac{3}{p}\right)^{-1} \ll \log^3(x).$$ Next we observe that
$$\sum_{\frac{x_i}{z(1+1/\mathcal{X})} < p \leqslant \frac{x_i}{z}}\frac{1}{p} \ll \frac{1}{\mathcal{X}\log(y_j)}$$
as $x_i/y_j \leqslant z \leqslant x_i/y_{j-1}$. From this, we deduce that for $k=2,3$
$$\Prob\left(\mathcal{Q}_\ell^{(k)}\right) \ll \frac{\ell^K}{(T(\ell))^2}\sum_{X_{\ell-1} < x_i \leqslant X_\ell}\sum_{j^* < j \leqslant J}\frac{(\log(x_i))^6}{\mathcal{X}^2(\log(y_0))^2}.$$
By our choice of $\mathcal{X}$ (and $r$) this is more than sufficient to show that both $\Prob\left(\mathcal{Q}_\ell^{(2)}\right)$ and $\Prob\left(\mathcal{Q}_\ell^{(3)}\right)$ are summable in $\ell$.
\end{proof}
\section{Bounding $\mathcal{Q}_\ell^{(1)}$} \label{Q_1_part1}
This leaves us with handling the $\Prob\left(\mathcal{Q}_\ell^{(1)}\right)$ term, which will take by far the most effort in handling. We will explain why we seek to prove this type of bound and what can be done to improve this at the end of Sections \ref{Q_good} and \ref{setup_heur} since there are two places where improvements can be made.
\subsection{Preparing to apply Proposition \ref{HA1}}
To begin, we will change $\mathcal{Q}_\ell^{(1)}$ slightly in order to eventually apply Parseval's identity for Dirichlet series so we can eventually apply Harper's low moments results for random Euler products \cite{aharper201}, as well as to appropriately normalise this quantity so we can show it is a supermartingale. These modifications appear in both Caich \cite{caich23} and Mastrostefano's \cite{mastrostefano2022almost} work. First, we introduce a small shift in the $z$ power of $z^{1/2\log(X_\ell)}$. On the range $[x_i/y_j,x_i/y_{j-1}]$, this factor acts as a multiplicative constant. This allows us to conclude
$$\mathcal{Q}_\ell^{(1)}(x_i,y_j;f) \ll \frac{1}{\log(y_j)}\int_{x_i/y_j}^{x_i/y_{j-1}}\left|\sum_{\substack{n \leqslant z\\P(n) < \frac{x_i}{z}}}\frac{f(n)}{\sqrt{n}}\right|^2\frac{dz}{z^{1+\frac{2}{\log(X_\ell)}}}.$$
Then after extending the range of the integral, we see that $\mathcal{Q}_\ell^{(1)}(x_i,y_j;f) \ll U_j$ where
\begin{equation}
    U_{j} := \frac{1}{\log(y_j)}\int_{0}^\infty \max_{y_{j-1} < p \leqslant y_j}\left|\sum_{\substack{n \leqslant z\\P(n) <p}}\frac{f(n)}{\sqrt{n}}\right|^2 \frac{dz}{z^{1+\frac{2}{\log(X_\ell)}}}. \label{u_def}
\end{equation}
What is very important here is that $U_j$ is unaffected by the supremum over the $x_i$. Using this, we define the new event
\begin{equation}
    \mathcal{Q}_\ell^{(*)} := \left\{ \sup_{j^* \leqslant j \leqslant J} U_{j} > \frac{\ell^{K/2}T(\ell)}{C\ell\log(\ell)}\right\}, \label{Q*def}
\end{equation}
for some $C>0$ constant (to absorb the various implicit constants that we used when defining $U_j$). Due to the previous reasoning, we see that $\Prob(\mathcal{Q}_\ell^{(1)}) \leqslant \Prob(\mathcal{Q}_\ell^{(*)})$. Next we apply Parseval's identity for Dirichlet series to $U_{j}$, and show that this is a supermartingale.\par
\subsection{Handling the good part of $\mathcal{Q}_\ell^{(1)}$} \label{Q_good}
\begin{lemma}
    For $\ell$ sufficiently large, we have that the sequence
    \begin{equation}
        \mathcal{I}_{j} := \frac{1}{\log(y_j)}\int_{-\infty}^\infty \left|\frac{F_{y_{j}}(1/2+1/\log(X_\ell)+it)}{\frac{1}{\log(X_\ell)}+it}\right|^2dt
    \end{equation}
    is a supermartingale with respect to the filtration $(\mathcal{F}_{y_j})_{0 \leqslant j \leqslant J}$. \label{I_super}
\end{lemma}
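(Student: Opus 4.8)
The plan is to exploit the martingale structure of the Rademacher Euler product directly at the level of the Parseval integral, just as is done for the sums $S_{i,j}$ earlier. First I would recall that by Proposition \ref{HA1} (Parseval for Dirichlet series) applied to the Dirichlet series $\sum_{P(n) < y_j} f(n) n^{-s}$, whose abscissa of convergence is almost surely at most $1/2$, we have the almost sure identity
\begin{equation*}
\int_0^\infty \frac{\left|\sum_{\substack{n \leqslant z \\ P(n) < y_j}} \frac{f(n)}{\sqrt{n}}\right|^2}{z^{1 + 2/\log(X_\ell)}}\,dz = \frac{1}{2\pi}\int_{-\infty}^\infty \left|\frac{F_{y_j}(1/2 + 1/\log(X_\ell) + it)}{1/\log(X_\ell) + it}\right|^2\,dt,
\end{equation*}
so that $\mathcal{I}_j$ is (up to the harmless factor $2\pi$ and the $\log(y_j)$ normalisation) the left-hand side, a genuinely nonnegative random variable with finite expectation. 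The key point is that the integrand on the left, for each fixed $z$, is of the form studied in Lemma \ref{Y_sub}: adding the next prime $p$ (the smallest prime exceeding the previous range) multiplies the Euler product by $(1 + f(p) p^{-1/2 - 1/\log(X_\ell) - it})$, and $f(p)$ is a mean-zero Rademacher variable independent of $\mathcal{F}_{y_j}$.

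The main computation is therefore the conditional expectation step. Let $p_1 < p_2 < \cdots$ enumerate the primes in $(y_j, y_{j+1}]$; writing $F^{(k)}$ for the partial product including $p_1,\dots,p_k$, I would show $\E(|F^{(k)}(s)|^2 \mid \mathcal{F}) = |F^{(k-1)}(s)|^2 (1 + p_k^{-1-2/\log(X_\ell)})$ for $s = 1/2 + 1/\log(X_\ell) + it$, using $\E f(p_k) = 0$ and $\E f(p_k)^2 = 1$, and hence
\begin{equation*}
\E\left(\left|F_{y_{j+1}}(s)\right|^2 \,\middle|\, \mathcal{F}_{y_j}\right) = \left|F_{y_j}(s)\right|^2 \prod_{y_j < p \leqslant y_{j+1}}\left(1 + \frac{1}{p^{1+2/\log(X_\ell)}}\right).
\end{equation*}
Integrating against $|1/\log(X_\ell) + it|^{-2}\,dt/(2\pi)$ and using Tonelli to interchange the conditional expectation and the integral, this gives
\begin{equation*}
\E\left(\log(y_{j+1})\,\mathcal{I}_{j+1} \,\middle|\, \mathcal{F}_{y_j}\right) = \log(y_j)\,\mathcal{I}_j \cdot \prod_{y_j < p \leqslant y_{j+1}}\left(1 + \frac{1}{p^{1+2/\log(X_\ell)}}\right).
\end{equation*}
So the supermartingale property reduces to the deterministic inequality
\begin{equation*}
\frac{\log(y_{j+1})}{\log(y_j)} \geqslant \prod_{y_j < p \leqslant y_{j+1}}\left(1 + \frac{1}{p^{1+2/\log(X_\ell)}}\right),
\end{equation*}
which I would verify by taking logarithms: the right side is $\leqslant \sum_{y_j < p \leqslant y_{j+1}} p^{-1-2/\log(X_\ell)} \leqslant \sum_{y_j < p \leqslant y_{j+1}} p^{-1}$, which by Mertens' theorem is $\log\log(y_{j+1}) - \log\log(y_j) + o(1) = \log(\log(y_{j+1})/\log(y_j)) + o(1)$; since $\log(y_{j+1})/\log(y_j) = e^{1/\ell}$, the left side $\log(e^{1/\ell}) = 1/\ell$ exceeds the right side once $\ell$ is large enough, because the Mertens error term is $o(1/\ell)$ uniformly over these prime ranges (here one uses $\log(y_j) \geqslant \log(y_0)$ which is enormous in $\ell$, so the Mertens error is far smaller than $1/\ell$). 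This is where the hypothesis "$\ell$ sufficiently large" is used.

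The main obstacle is bookkeeping rather than conceptual: one must check the Parseval identity applies (the Dirichlet series $F_{y_j}$ is a finite Euler product, so it is entire and the abscissa of convergence is $-\infty$, making Proposition \ref{HA1} immediately applicable with $\sigma = 1/\log(X_\ell) > 0$), and one must justify the Tonelli interchange, which is fine since everything in sight is nonnegative. One should also be slightly careful that the filtration is indexed by $j$ via $\mathcal{F}_{y_j}$ and that $\mathcal{I}_j$ is indeed $\mathcal{F}_{y_j}$-measurable — true because $F_{y_j}$ depends only on $f(p)$ for $p \leqslant y_j$. I would close by noting that integrability of each $\mathcal{I}_j$ follows from the $q=1$ case of the expectation bounds for Euler product integrals (or directly from the Parseval identity together with Proposition \ref{prop_hyper} applied to truncations), so all the hypotheses of the supermartingale definition are met. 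With these points in place the lemma follows.
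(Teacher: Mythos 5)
Your overall strategy matches the paper's: condition on $\mathcal{F}_{y_{j-1}}$, factor out the Euler product over $p\leqslant y_{j-1}$, compute the expectation of the remaining increment, and compare with the ratio $\log(y_{j-1})/\log(y_j)=e^{-1/\ell}$. Your explicit computation $\E\prod_{y_{j-1}<p\leqslant y_j}|1+f(p)p^{-s}|^2 = \prod_{y_{j-1}<p\leqslant y_j}(1+p^{-1-2\sigma_\ell})$, using only $\E f(p)=0$ and $f(p)^2=1$, is an exact identity and is cleaner than the paper's invocation of Euler Product Result 2 (Proposition \ref{euler_prod_result}), which carries a spurious $O(1/(\sqrt{y_{j-1}}\log y_{j-1}))$ error.

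However, the concluding deterministic inequality has a genuine gap. You reduce matters to showing
\begin{equation*}
\sum_{y_{j-1}<p\leqslant y_j}\log\Bigl(1+\frac{1}{p^{1+2\sigma_\ell}}\Bigr) \leqslant \frac{1}{\ell},
\end{equation*}
and then upper bound the left side by $\sum_{y_{j-1}<p\leqslant y_j}p^{-1}$, which by Mertens is $\frac{1}{\ell}+E_j$ with $|E_j| = O(1/\log y_{j-1})$. You then argue that $|E_j|$ is far smaller than $1/\ell$, which is true, but smallness of $|E_j|$ does not control its sign: if $E_j>0$ then $\sum p^{-1} > 1/\ell$ and your chain of inequalities breaks. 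The step $p^{-1-2\sigma_\ell}\leqslant p^{-1}$ discards exactly the term that rescues the argument. Keeping it, as the paper does, one writes $p^{-1-2\sigma_\ell}=p^{-1}e^{-2\sigma_\ell\log p}$ and Taylor expands to find a negative correction of order
\begin{equation*}
-2\sigma_\ell\sum_{y_{j-1}<p\leqslant y_j}\frac{\log p}{p} \asymp -\frac{2\sigma_\ell\log y_{j-1}}{\ell} = -\frac{2}{\ell}\,e^{(j-1)/\ell}\,2^{-K\ell^{K-1}},
\end{equation*}
which, while itself small, dominates the Mertens error $E_j = O\bigl(e^{-(j-1)/\ell}2^{-(\ell^K-K\ell^{K-1})}\bigr)$ by a factor of roughly $2^{\ell^{K-1}(\ell-2K)}$ once $\ell>2K$. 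It is precisely this negative correction that yields $\E(\mathcal{I}_j\mid\mathcal{F}_{y_{j-1}}) \leqslant a(j)\,\mathcal{I}_{j-1}$ with $a(j)\leqslant 1$; you cannot afford to throw it away.

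As a minor point, the Dirichlet series $\sum_{P(n)<y_j}f(n)n^{-s-1/2}$ has abscissa of absolute convergence $-1/2$, not $-\infty$ (it is the analytic continuation $F_{y_j}(s+1/2)$, a finite Euler product, that is entire); this does not affect the validity of your application of Proposition \ref{HA1} at $\sigma=1/\log X_\ell>0$.
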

\begin{proof}
    Following Caich, we have that
    \begin{align*}
        \E(\mathcal{I}_j \ | \mathcal{F}_{y_{j-1}}) =& \frac{1}{\log(y_{j})}\int_{-\infty}^{\infty}\E\left(\left|\frac{F_{y_{j}}(\frac{1}{2}+\frac{1}{\log(X_\ell)}+it)}{\frac{1}{\log(X_\ell)}+it}\right|^2 dt\ \bigg| \mathcal{F}_{y_{j-1}}\right) dt \\
        =& \frac{1}{\log(y_j)} \int_{-\infty}^{\infty}\E\left(\prod_{y_{j-1} < p \leqslant y_j}\left|1+\frac{f(p)}{p^{\frac{1}{2}+\frac{1}{\log(X_\ell)}+it}}\right|^2\right)\left|\frac{F_{y_{j-1}}(\frac{1}{2}+\frac{1}{\log(X_\ell)}+it)}{\frac{1}{\log(X_\ell)}+it}\right|^2  dt.
    \end{align*}
    Since $\sigma_\ell := \frac{1}{\log(X_\ell)} > 0$, we can deduce using the second part of Proposition \ref{euler_prod_result}
    \begin{align*}
    \E\left(\prod_{y_{j-1} < p \leqslant y_j}\left|1+\frac{f(p)}{p^{1/2+\sigma_\ell+it}}\right|^2\right) = \exp&\left(\sum_{y_{j-1} < p \leqslant y_{j}}\frac{1}{p^{1+2\sigma_\ell}} + O\left(\frac{1}{\sqrt{y_{j-1}}\log(y_{j-1})}\right)\right) \\ 
    = \exp&\Biggr(\sum_{y_{j-1} < p \leqslant y_{j}}\frac{1}{p} + \sum_{y_{j-1} < p \leqslant y_j}\frac{e^{-2\sigma_\ell\log(p)}-1}{p} \\
    \quad &+ O\left(\frac{1}{\sqrt{y_{j-1}}\log(y_{j-1})}\right)\Biggr) \\
    \leqslant \exp&\Biggr(\sum_{y_{j-1} < p \leqslant y_{j}}\frac{1}{p} - \sum_{y_{j-1} < p \leqslant y_j}\frac{2\log(p)}{p\log(X_\ell)} \\ & \quad + \sum_{y_{j-1} < p \leqslant y_j}\frac{2(\log(p))^2}{p(\log(X_\ell))^2} +O\left(\frac{1}{\sqrt{y_{j-1}}\log(y_{j-1})}\right)\Biggr) \\
    \ll \exp&\left(\frac{1}{\ell} - \frac{2e^{\frac{j-1}{\ell}-K\ell^{K-1}}}{\ell}+o\left(\frac{e^{\frac{j-1}{\ell}-K\ell^{K-1}}}{\ell}\right)\right).
    \end{align*}
    For sufficiently large $\ell$, the remainder terms are summable in $\ell$ and are negligible. This means that
    $$\E(\mathcal{I}_j \ | \mathcal{F}_{{y}_{j-1}}) \leqslant a(j)\mathcal{I}_{j-1}$$
    where
    $$a(j) = \exp\left(-\frac{Ce^{\frac{j-1}{\ell}-K\ell^{K-1}}}{\ell}\right) \leqslant 1,$$
    for some $C>0$ constant, which is sufficient to prove the supermartingale condition.
\end{proof}
Interestingly, we do not need the extra prefactor that Caich requires to make this a super martingale due to our shift to the right of the critical line. Before the next lemma, we define the events $\mathcal{S}_{j,\ell} := \{\mathcal{I}_j \leqslant \frac{\sqrt{T(\ell)}}{\ell^{K/2}\sqrt{\ell\log(\ell)}}\}$ and $\mathcal{S}_\ell := \bigcap_{j^* < j \leqslant J}\mathcal{S}_{j,\ell}$.
\begin{lemma}
    The sum $\sum_{l \geqslant 1}\Prob\left(\mathcal{Q}_\ell^{(*)}\right)$ converges, given that the sum of $\Prob(\overline{\mathcal{S}_\ell})$ converges. \label{Q*conv}
\end{lemma}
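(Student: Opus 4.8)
The plan is to split according to the event $\mathcal{S}_\ell$. Since $\Prob(\mathcal{Q}_\ell^{(*)}) \le \Prob(\overline{\mathcal{S}_\ell}) + \Prob(\mathcal{Q}_\ell^{(*)} \cap \mathcal{S}_\ell)$ and the first term is summable by hypothesis, it suffices to show that $\sum_\ell \Prob(\mathcal{Q}_\ell^{(*)} \cap \mathcal{S}_\ell)$ converges. As there are only $\ll \ell\log(\ell)$ indices $j$ with $j^* < j \le J$, I would apply a union bound to reduce this to bounding $\Prob(\{U_j > \lambda_\ell\} \cap \mathcal{S}_{j-1,\ell})$ for each such $j$, where $\lambda_\ell := \frac{\ell^{21K/40}T(\ell)}{C\ell\log(\ell)}$ is the barrier appearing in (\ref{Q*def}) (the smallest admissible index would need a separate crude estimate, since the event $\mathcal{S}_{j,\ell}$ for its predecessor is not part of $\mathcal{S}_\ell$). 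The key point is that $\mathcal{S}_{j-1,\ell}$ is $\mathcal{F}_{y_{j-1}}$-measurable, so conditioning on $\mathcal{F}_{y_{j-1}}$ and applying Markov's inequality conditionally gives $\Prob(\{U_j > \lambda_\ell\} \cap \mathcal{S}_{j-1,\ell}) \le \lambda_\ell^{-1}\,\E\big(\mathbf{1}_{\mathcal{S}_{j-1,\ell}}\,\E(U_j \mid \mathcal{F}_{y_{j-1}})\big)$, and everything then comes down to the estimate $\E(U_j \mid \mathcal{F}_{y_{j-1}}) \ll \mathcal{I}_{j-1}$.

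To prove that estimate I would fix $z$ and note that, as $p$ runs over the primes, the partial sums $\sum_{n \le z,\, P(n) < p} f(n)/\sqrt{n}$ form a martingale with respect to $(\mathcal{F}_p)$, because the increment on passing to the next prime $p$ equals $\frac{f(p)}{\sqrt{p}}\sum_{m \le z/p,\, P(m) < p} f(m)/\sqrt{m}$ and $f(p)$ has mean zero and is independent of $\mathcal{F}_p$. Consequently $\big|\sum_{n \le z,\, P(n) < p} f(n)/\sqrt{n}\big|^2$ is a submartingale in $p$, and Doob's $L^2$ maximal inequality (Proposition \ref{Lr-Doob} with $r = 2$), applied conditionally on $\mathcal{F}_{y_{j-1}}$ over the primes $p \in (y_{j-1}, y_j]$, bounds the conditional expectation of $\max_{y_{j-1} < p \le y_j}\big|\sum_{n \le z,\, P(n) < p} f(n)/\sqrt{n}\big|^2$ by $4\,\E\big(\big|\sum_{n \le z,\, P(n) \le y_j} f(n)/\sqrt{n}\big|^2 \mid \mathcal{F}_{y_{j-1}}\big)$. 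Integrating this against $\mathrm{d}z/z^{1 + 2/\log(X_\ell)}$ (Tonelli, since the integrand is non-negative) and then applying the Parseval identity for Dirichlet series (Proposition \ref{HA1}, valid pathwise because $F_{y_j}(1/2 + s)$ is a finite Euler product and hence has abscissa of convergence $-\infty$, while $1/\log(X_\ell)>0$) turns the right-hand side into $\frac{2}{\pi}\,\E(\mathcal{I}_j \mid \mathcal{F}_{y_{j-1}})$; finally the supermartingale property from Lemma \ref{I_super} gives $\E(\mathcal{I}_j \mid \mathcal{F}_{y_{j-1}}) \le \mathcal{I}_{j-1}$, completing the estimate.

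To finish, on $\mathcal{S}_{j-1,\ell}$ one has $\mathcal{I}_{j-1} \le \frac{\sqrt{T(\ell)}}{\ell^{19K/40}\sqrt{\ell\log(\ell)}}$, so $\Prob(\{U_j > \lambda_\ell\} \cap \mathcal{S}_{j-1,\ell}) \ll \lambda_\ell^{-1}\frac{\sqrt{T(\ell)}}{\ell^{19K/40}\sqrt{\ell\log(\ell)}}$; summing over the $\ll \ell\log(\ell)$ admissible $j$ and substituting the value of $\lambda_\ell$ yields $\Prob(\mathcal{Q}_\ell^{(*)} \cap \mathcal{S}_\ell) \ll \frac{(\ell\log(\ell))^{3/2}}{\ell^{K}\sqrt{T(\ell)}}$, which, since $T(\ell) \ge \ell^{10}$ and $K \ge 1$, is $O(\ell^{-2})$ and hence summable in $\ell$; together with the assumed summability of $\Prob(\overline{\mathcal{S}_\ell})$ this proves the lemma. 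I expect the main obstacle to be the estimate $\E(U_j \mid \mathcal{F}_{y_{j-1}}) \ll \mathcal{I}_{j-1}$: one has to deal with the maximum over primes sitting \emph{inside} the $z$-integral (this is exactly where the martingale-in-$p$ structure and the conditional Doob step enter), and it is crucial that the comparison be to $\mathcal{I}_{j-1}$ itself rather than to $\E(\mathcal{I}_{j-1})$, which would be far too large for Markov to succeed — this is why the argument must condition on $\mathcal{F}_{y_{j-1}}$ and exploit the martingale-in-$p$ and supermartingale-in-$j$ structures at the same time.
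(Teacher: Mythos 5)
Your proposal is correct and follows essentially the same route as the paper: split on $\mathcal{S}_\ell$, union bound over $j$, Markov's inequality, Doob's $L^2$ maximal inequality for the submartingale in $p$, Parseval (Proposition \ref{HA1}), and the supermartingale property of $\mathcal{I}_j$ from Lemma \ref{I_super}. Your presentation is in fact a little cleaner, since you condition on the $\sigma$-algebra $\mathcal{F}_{y_{j-1}}$ rather than on the event $\mathcal{S}_{j-1,\ell}$, and you correctly flag the boundary index near $j^*$, where $\mathcal{S}_{j-1,\ell}$ is not contained in $\mathcal{S}_\ell = \bigcap_{j^* < j \leqslant J}\mathcal{S}_{j,\ell}$ — a minor bookkeeping point the paper's proof glosses over.
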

\begin{proof}
By the triangle inequality, we have that
\begin{align*}
    \Prob\left(\mathcal{Q}_\ell^{(*)}\right) \leqslant& \Prob\left( \sup_{j^* \leqslant j \leqslant J}\left\{U_{j} > \frac{\ell^{K/2}T(\ell)}{\ell\log(\ell)}\right\} \bigcap \{\mathcal{S_\ell}\}\right) + \Prob(\{\overline{\mathcal{S}_\ell}\}) \\
    \leqslant& \sum_{j^* \leqslant j \leqslant J}\Prob\left(\left\{U_j \geqslant \frac{\ell^{K/2}T(\ell)}{\ell\log(\ell)} \right\} \bigcap \{\mathcal{S}_{j-1,\ell}\}\right) + \Prob(\{\overline{\mathcal{S}_\ell}\}). \numberthis \label{Q*ineq}
\end{align*}
We will handle the first event now. Notice that after applying Markov's inequality, we have
$$\Prob\left(\left\{U_j \geqslant \frac{\ell^{K/2}T(\ell)}{\ell\log(\ell)} \right\} \bigcap \{\mathcal{S}_{j-1,\ell}\}\right) \leqslant \frac{\ell\log(\ell)}{\ell^{K/2}T(\ell)}\E(U_j | \mathcal{S}_{j-1,\ell}).$$
We now can use Lemma \ref{Y_sub} combined with Doob's $L^2$ inequality (Proposition \ref{Lr-Doob} for $r=2$) to deduce
\begin{align*}
    \E\left(\max_{y_{j-1} < p \leqslant y_j}\left|\sum_{\substack{n \leqslant z\\P(n)<p}}\frac{f(n)}{\sqrt{n}}\right|^2 \ \bigg| \mathcal{S}_{j-1,\ell}\right) \leqslant& \ 4 \max_{y_{j-1} < p \leqslant y_j}\E\left(\left|\sum_{\substack{n \leqslant z\\P(n) < p}}\frac{f(n)}{\sqrt{n}}\right|^2 \ \bigg| \mathcal{S}_{j-1,\ell}\right) \numberthis \label{bad}\\
    =& \ 4 \E\left(\left|\sum_{\substack{n \leqslant z\\P(n) < y_j}}\frac{f(n)}{\sqrt{n}}\right|^2 \ \bigg| \mathcal{S}_{j-1,\ell}\right).
\end{align*}
We resume computing the expectation of $U_j$ conditional on $\mathcal{S}_{j-1,\ell}$:
\begin{align*}
    \E(U_j | \mathcal{S}_{j-1,\ell}) =& \frac{1}{\log(y_j)}\int_{0}^\infty \E\left(\max_{y_{j-1} < p \leqslant y_j}\left|\sum_{\substack{n \leqslant z\\P(n) < p}}\frac{f(n)}{\sqrt{n}}\right|^2 \ \bigg| \mathcal{S}_{j-1,\ell}\right) \frac{dz}{z^{1+2\sigma_\ell}} \\
    \ll& \E\left(\frac{1}{\log(y_j)}\int_0^\infty\left|\sum_{\substack{n \leqslant z\\P(n) < y_j}}\frac{f(n)}{\sqrt{n}}\right|^2\frac{dz}{z^{1+2\sigma_\ell}} \ \bigg| \mathcal{S}_{j-1,\ell}\right) \\
    =& \E(\mathcal{I}_j | \mathcal{S}_{j-1,\ell}).
\end{align*}
One can clearly see we have applied Proposition \ref{HA1} to obtain the final equality. We can now apply Lemma \ref{I_super} and we obtain the following chain of inequalities:
$$\E(U_j | \mathcal{S}_{j-1,\ell}) \ll \E(\mathcal{I}_j | \mathcal{S}_{j-1,\ell}) \leqslant \E(\mathcal{I}_{j-1} | \mathcal{S}_{j-1,\ell}) \leqslant \frac{\sqrt{T(\ell)}}{\ell^{K/2}\sqrt{\ell\log(\ell)}}$$
Again, we can follow Caich's proof and we find
\begin{align*}
    \sum_{j^* < j \leqslant J}\Prob\left(\left\{U_j > \frac{\ell^{K/2}T(\ell)}{\ell\log(\ell)}\right\}\bigcap\{\mathcal{S}_{j-1,\ell}\}\right) \leqslant& \sum_{j^* < j \leqslant J}\frac{\ell\log(\ell)}{\ell^{K/2}T(\ell)}\E(U_j|\mathcal{S}_{j-1,\ell}) \\
    \leqslant \sum_{j^* < j \leqslant J} \frac{\sqrt{\ell\log(\ell)}}{(T(\ell))^{1/2}\ell^K} \ll \frac{\sqrt{\ell\log(\ell)}}{(T(\ell))^{1/2}},
\end{align*}
which given our choice of $T(\ell)$ is summable in $\ell$. 
\end{proof}
At this point, we observe that this good event is one of the barriers to obtaining the sharp almost sure upper bound, as every other complement event is bounded above by $\frac{\sqrt{T(\ell)}}{\ell^{K/2}\sqrt{\ell\log(\ell)}}$ almost surely (which we prove in the previous Lemmas and the next section). This is caused by taking the upper bounding $\mathcal{Q}_\ell^{(1)}$ by $U_j$ which lost the dependence on $i$, which means that we must consider the supremum over \textit{all} the $y_j$ and not just the ones which are "close" to the individual $x_i$. The barrier to doing this is handling the part of the sum where the largest and second largest distinct prime factors of integer $n$ are of similar size. This explains why we are able to obtain a much sharper result when restricting to a unique large prime factor in Theorem \ref{sharp_restricted_ub}.
\subsection{Preparing to bound the bad part of $\mathcal{Q}_\ell^{(*)}$} \label{setup_heur}
Finally, we come to bounding $\Prob(\overline{\mathcal{S}_\ell})$. This will use the ideas of Hardy \cite{shardy23} and Gerspach \cite{gerspach20}, as well as Harper's Gaussian random walk and multiplicative chaos results \cite{harper23} to achieve our goal. Before doing this, we will define another event $\mathcal{R}_\ell:=\{\mathcal{I}_0 \leqslant \frac{(T(\ell))^{1/4}}{\ell^{K/2}(\ell\log(\ell))^{1/4}}\}$. We proceed similarly to before, and observe
\begin{align*}
    \Prob(\overline{\mathcal{S}_\ell}) \leqslant& \ \Prob\left(\left\{\max_{1 \leqslant j \leqslant J}\mathcal{I}_j > \frac{(T(\ell))^{1/2}}{\ell^{K/2}\sqrt{\ell\log(\ell)}} \right\} \ \bigg| \mathcal{R}_\ell\right) + \Prob(\overline{\mathcal{R}_\ell}) \numberthis \label{S_ineq} 
\end{align*}
We then use the fact that $(\mathcal{I}_j)_{j \leqslant J}$ is a supermartingale, and apply Proposition \ref{Doob_super} to show 
$$\Prob\left(\left\{\max_{0 \leqslant j \leqslant J}\mathcal{I}_j > \frac{(T(\ell))^{1/2}}{\ell^{K/2}\sqrt{\ell\log(\ell)}} \right\} \ \bigg| \mathcal{R}_\ell\right) \leqslant \frac{\sqrt{\ell\log(l)}\ell^{K/2}}{\sqrt{T(\ell)}}\E(\mathcal{I}_0 | \mathcal{R}_\ell) \leqslant \frac{(\ell\log(\ell))^{1/4}}{(T(\ell))^{1/4}},$$
which is summable in $\ell$ by the choice of $T(\ell)$. \par
To handle the final probability $\Prob(\overline{\mathcal{R}_\ell})$, we apply Markov's inequality to the exponent $q=2/3$. This obtains 
\begin{equation}
    \Prob(\overline{\mathcal{R}_\ell}) \leqslant \frac{\E\left[(\mathcal{I}_0^{2/3})\right](\ell\log(\ell))^{1/6}\ell^{K/3}}{(T(\ell))^{1/6}}. \label{bad_prob_bound}
\end{equation}
The rest of this section and the next section is dedicated to bounding $\mathcal{I}_0$. We will use the methods employed by Hardy \cite{shardy23}, which were inspired by the splitting used by Gerspach in his paper on pseudomoments of the Riemann zeta function \cite{gerspach20}. At this point, we split the domain of integration in $\mathcal{I}_0$ over various intervals to maximise our savings:
\begin{align}
        \int_{-\infty}^{\infty} S(t) dt \ll& \int_{-1/\log(y_0)}^{1/\log(y_0)} S(t) dt + \sum_{\substack{(\log(y_0))^{-1}< |T| \leqslant (\log_2(y_0))^{-K} \\ T \text{ dyadic}}} \int_T^{2T} S(t) dt \\
        +& \sum_{\substack{(\log_2(y_0))^{-K}< |T| \leqslant 1/1024 \\ T \text{ dyadic}}} \int_T^{2T} S(t) dt+ \int_{|t| > 1/512} S(t) dt \label{dyadic_decomp}
\end{align}
with
\begin{equation*}
    S(t) := \left|\frac{F_{y_0}(1/2+\sigma_\ell + it)}{\sigma_\ell+it}\right|^2
\end{equation*}
defined for compactness of notation. This looks very similar to the unweighted case, with the main difference coming from the different weighting from the denominator. In Harper's work \cite{aharper201}, the contribution from $t$ very close to $0$ could be handled using an application of H\"{o}lder's inequality in the Rademacher case, which we cannot do here. The other difference is that we want to find how large this object is almost surely, and not in expectation. This grants us various savings when handling the Rademacher Euler product (which can also be seen in a different sense when comparing the results of \cite{gerspach20} and \cite{shardy23} in the Steinhaus case). We will do this by introducing various sub-events to handle this event. The choice of stopping the dyadic cutting procedure at $t=1/(\log_2(y_0))^{\epsilon}$ is rather contrived, but we do this to ensure that all of our results converge in $\ell$ (this is important considering our estimate on the almost sure size of the random Euler product on various dyadic intervals).\par
The notation connected to the second and third integrals on the right hand side of equation (\ref{dyadic_decomp}) is rather confusing, so we explain it now. The $T$ considered are of the form $T = 2^n/\log(y_0)$, so they are in the range $|T| \in [1/\log(y_0),1/(\log_2(y_0))^{\epsilon}]$ (we are technically summing over $n$ here, but for tidiness we reduce it to $T$ only). The first limit point of $1/\log(y_0)$ is achieved using this dyadic procedure, but the other two limit points will be over and underestimated (but only affects the multiplicative constant). We discretise the range in order to get a much stronger control on the random Euler products. We now manipulate the integrals to the point where we can condition on the size of the integrals and the Euler products:
\begin{align*}
    \int_{-1/\log(y_0)}^{1/\log(y_0)}S(t) dt \leqslant (\log(X_\ell))^2&\int_{-1/\log(y_0)}^{1/\log(y_0)}\left|\frac{F_{y_0}(1/2+\sigma_\ell+it)}{F_{y_0}(1/2+\sigma_\ell)}\right|^2dt |F_{y_0}(1/2+\sigma_\ell)|^2;
\end{align*} and the appropriate form for the second integral being
\begin{align*}
    \sum_{\substack{1/\log(y_0)< |T| \leqslant (\log_2(y_0))^{-K}  \\ T \text{ dyadic}}}&\text{sgn(T)}\int_T^{2T} S(t) dt \\
    \leqslant  \sum_{\substack{1/\log(y_0)< |T| \leqslant (\log_2(y_0))^{-K} \\ T \text{ dyadic}}}&\frac{\text{sgn(T)}}{T^2} \int_T^{2T}\left|\frac{F_{y_0}(1/2+\sigma_\ell+it)}{F_{e^{1/|T|}}(1/2+\sigma_\ell)}\right|^2 dt |F_{e^{1/|T|}}(1/2+\sigma_\ell)|^2.
\end{align*}
We introduced the signum function for tidiness so we ensure that the integral limits will be written correctly when $T$ is negative. In the first two integrals, we get the saving $\ell^{K/2}$ from conditioning on the value of $|F_{e^{1/|T|}}(1/2+\sigma_\ell)|^2$. The third contribution is by far the most involved to deal with because we must be much more delicate when dealing with the denominator, so we use a combination of Gerspach and Harper's low moments methods \cite{aharper201} to get our desired bounds. For the final contribution, we directly use Proposition \ref{mult_chaos} to address this. We also split the integral up in the final contribution, and find that it can be upper bounded by
\begin{align*}
    \int_{|t| > 1/512} S(t) dt \ll \sum_{N \in \mathbb{Z}}\frac{1}{(N-1/2)^2}\int_{N-1/2}^{N+1/2}|F_{y_0}(1/2+\sigma_\ell+it)|^2dt.
\end{align*}
This prepares the integral for the use of Proposition \ref{mult_chaos}. Splitting this up further, we can see that if we split the contribution of $|N| \leqslant \log(y_0)$ and $|N| > \log(y_0)$, then it is sufficient to only use a mean square estimate for the Euler product since we would now have
\begin{align*}
    &\E\left(\sum_{|N| > \log(y_0)}\frac{1}{(N-1/2)^2}\int_{N-1/2}^{N+1/2}|F_{y_0}(1/2+\sigma_\ell+it)|^2 dt\right) \\ \ll& \frac{1}{\log(y_0)}\sup_{|t| > \log(y_0)-1/2}\left(\E|F_{y_0}(1/2+\sigma_\ell+it)|^2\right) \ll 1
\end{align*}
by upper bounding the convergent sum and Fubini--Tonelli. Then by Markov's inequality, we see that
$$\Prob\left(\sum_{|N| > \log(y_0)}\frac{1}{(N-1/2)^2}\int_{N-1/2}^{N+1/2}|F_{y_0}(1/2+\sigma_\ell+it)|^2 dt> \ell^2\right) \ll \frac{1}{\ell^2},$$
which is summable in $\ell$. We now define the events
\begin{align*}
    \mathcal{H}_{\ell}(T) =& \left\{\left|F_{e^{1/|T|}}(1/2+\sigma_\ell)\right| > A|T|^{1/10}\right\}; \\
    \mathcal{P}_{\ell}^{(1)} =& \left\{\int_{-1/\log(y_0)}^{1/\log(y_0)}\left|\frac{F_{y_0}(1/2+\sigma_\ell+it)}{F_{y_0}(1/2+\sigma_\ell)}\right|^2 dt > \frac{\ell^2}{\log(y_0)}\right\}; \\
    \mathcal{P}_{\ell}^{(2)} =& \left\{\frac{1}{\log(y_0)}\sum_{\substack{(\log(y_0))^{-1} \leqslant |T| \leqslant (\log_2(y_0))^{-K} \\ T \text{ dyadic}}}\frac{\text{sgn}(T)}{T^2} \int_{T}^{2T} \left|\frac{F_{y_0}(1/2+\sigma_\ell+it)}{F_{e^{1/|T|}}(1/2+\sigma_\ell)}\right|^2 dt > \ell^{K+2}\right\}; \\
    \mathcal{P}_l^{(3)} =& \left\{\frac{1}{\log(y_0)}\sum_{\substack{(\log_2(y_0))^{-K} \leqslant |T| \leqslant 1/1024 \\ T \text{ dyadic}}}\frac{\text{sgn}(T)}{T^2} \int_{T}^{2T}\left|F_{y_0}(1/2+\sigma_\ell+it)\right|^2 dt > \frac{\ell^{4}}{\ell^{K/2}}\right\}; \\
    \mathcal{P}_{\ell}^{(4)} =& \left\{\frac{1}{\log(y_0)}\sum_{|N| \leqslant \log(y_0)}\frac{1}{(N-1/2)^2}\int_{N-1/2}^{N+1/2} |F_{y_0}(1/2+\sigma_\ell+it)|^2dt > \frac{\ell^{2}}{\ell^{K/2}}\right\}; \\
    \mathcal{P}_\ell^{(5)} =& \left\{\sum_{|N| > \log(y_0)}\frac{1}{(N-1/2)^2}\int_{N-1/2}^{N+1/2}|F_{y_0}(1/2+\sigma_\ell+it)|^2 dt> \ell^2\right\};
\end{align*}
for $A>0$ an arbitrarily large constant. The range of $T$ we consider in the event $\mathcal{H}_\ell(T)$ is
\begin{equation}
    |T| \in [(\log(y_0))^{-1},(\log_2(y_0))^{-K}]. \label{T_range}
\end{equation}
We will show that all these events occur with probability summable in $\ell$ (and conclude using the first Borel--Cantelli lemma). These calculations will also be useful when looking at the proof of Theorem \ref{lbtheorem} later. After applying the conditioning on the events $\mathcal{H}_\ell(T)$ and each $\mathcal{P}_\ell^{(k)}$ not occurring, we have 
\begin{align*}
    \frac{1}{\log(y_0)}\int_{-\infty}^{\infty}S(t) dt \ll& \frac{\log(X_\ell))^2\ell^2}{(\log(y_0))^{11/5}} + \ell^{K+2}(\log\log(y_0))^{-K/5}\numberthis \label{big_decomp}\\ +&  \frac{\ell^4}{\ell^{K/2}} + \frac{\ell^{2}}{\ell^{K/2}} + \ell^{2}.
\end{align*}
We see that $\mathcal{I}_0$ satisfies the upper bound
$$\ll \left(\frac{(\log(X_\ell))^2\ell^2}{(\log(y_0))^{11/5}}+\ell^2\ell^{-cK^2} +\ell^2\ell^{-K/2} +\ell^{4}\ell^{-K/2}+\ell^{2}\right),$$
for $c>0$ a small constant. This can be further upper bounded by
$$B\ell^{4}\ell^{-K/2}$$
with $B>0$ an absolute large constant. Then this can be substituted into equation (\ref{bad_prob_bound}), where we now see that
$$\Prob(\overline{\mathcal{R}_\ell}) \leqslant \frac{B\ell^{8/3}\ell^{-K/3}(\ell\log(\ell))^{1/6}\ell^{K/3}}{(T(\ell))^{1/6}} \ll \frac{(\log(\ell))^{1/6}}{\ell^{7/6}},$$
which is summable in $\ell$, which then by the first Borel--Cantelli lemma, shows that the event $\mathcal{R}_{\ell}$ almost surely holds for sufficiently small $\epsilon$. All that is now left is to prove that each of the previously defined events are also summable in $\ell$, so only occur finitely often.
\section{Bounding the bad part of $\mathcal{Q}_\ell^{(*)}$} \label{ce}
All that is now left is to show that for any $\epsilon>0$, all the terms on the right hand side of equation (\ref{big_decomp}) are summable in $\ell$. 
\subsection{Bounding $\Prob(\mathcal{H}_\ell(T))$ and $\Prob(\mathcal{H}_\ell')$} \label{euler_LOIL}
Given that the shift $\sigma_\ell$ is so small it is negligible, it suffices to bound $\Prob(\mathcal{H}_\ell(T))$ as $\Prob(\mathcal{H}_\ell(T))$ can be handled in a similar fashion (the shift is $\frac{1}{\log(X_\ell)}$, which can be dealt with using partial summation, and differs by a multiplicative error). We recall that
$$\mathcal{H}_\ell ' := \left\{ \left|F_{y_{j^*}}(1/2)\right| > A(\log(y_{j^*}))^{-1/10}\right\}.$$
In Hardy's treatment of the Steinhaus case, this event had to be treated very delicately, since most of the attention was dedicated to handling the sum of independent random variables
\begin{equation*}
    \sum_{p \leqslant x}\frac{f_{st}(p)}{\sqrt{p}}
\end{equation*}
where $f_{st}$ are Steinhaus random variables. This required a second range of sparse points to separate the $X_\ell$ to attain the power saving desired in that bound. As for our case, we do not need to be as delicate since we have that
\begin{equation}
    F(s) = \prod_p\left(1+\frac{f(p)}{p^s}\right) = \exp\left(\sum_p \frac{f(p)}{p^s} - \sum_p\frac{1}{2p^{2s}} + O(1)\right). \label{euler_exp}
\end{equation}
We will see that the second term in the second equality is the dominant contribution when evaluating the size of this Euler product. The bound we seek is certainly not optimal in any regard, with a better bound being likely achievable using the methods of \cite{shardy23}. However, the bound we seek is easier to prove and allows us to consider the entire range stated in equation (\ref{T_range}), so we do not pursue this here. \par
Note that $y_{j^*}$ is even larger than $y_0$, so a similar method applies with even stronger summability. Taking the logarithm on both sides and Taylor expanding the logarithm shows that equation (\ref{euler_exp}) is upper bounded by:
\begin{equation*}
    \Prob\left(\sum_{p \leqslant e^{1/|T|}} \frac{f(p)}{p^{1/2+\sigma_\ell}} - \sum_{p \leqslant e^{1/|T|}} \frac{1}{2p^{1+2\sigma_\ell}} > \frac{1}{10}\log\left(\frac{1}{|T|}\right) + A'\right),
\end{equation*}
($A>A'>0$ is again an arbitrarily large constant). Since $\sum_{p \leqslant e^{1/|T|}}\frac{1}{p^{1+2\sigma_\ell}} = \log(\frac{1}{|T|}) +O(1)$, this leaves us with the upper bound
\begin{equation}
    \Prob\left(\sum_{p \leqslant e^{1/|T|}}\frac{f(p)}{p^{1/2+\sigma_\ell}}> \frac{2}{5}\log\left(\frac{1}{|T|}\right) \right). \label{Gcomp}
\end{equation}
One sees that the $A$ introduced in the definition of $\mathcal{H}_\ell(T)$ is introduced to absorb the various $O(1)$ terms. Appealing to Proposition \ref{chernoff} with choice $x = \frac{2}{5}\left(\log\left(\frac{1}{|T|}\right)\right)^{1/2}$ (which is permissible), we obtain the above satisfies
$$\ll \exp\left(-0.128\log\left(\frac{1}{|T|}\right)\right).$$
Note that we only are interested in this quantity when $\exp(1/|T|) \gg \exp((\log\log(y_0))^{\epsilon})$, so this final bound has size at most
$$\exp\left(-0.128K\epsilon\log(\ell)\right) \ll \ell^{-3}$$
which is summable in $\ell$.
\subsection{Bounding the integral events}
Before proceeding, We remark that we already showed that $\Prob(\mathcal{P}_\ell^{(5)})$ is summable in $\ell$ in Section \ref{setup_heur}, which leaves us with four more events to consider. For the first two events, we appeal to Proposition \ref{euler_prod_result}. This is why we chose the lengths $e^{1/|T|}$ for the Euler products in the definition of $\mathcal{P}_\ell^{(2)}$ and $\mathcal{P}_\ell^{(3)}$, so the range including the small primes can get appropriately cancelled out. \par
We start with $\Prob\left(\mathcal{P}_{\ell}^{(1)}\right)$. Applying Markov's inequality and the union bound, we find that using Proposition \ref{euler_prod_result} and equation (\ref{short_euler_cancel})
\begin{align*}
    \Prob\left(\mathcal{P}_{\ell}^{(1)}\right) \leqslant&   \frac{\log(y_0)}{\ell^2}\int_{-1/\log(y_0)}^{1/\log(y_0)} \E\left(\left|\frac{F_{y_0}(1/2+\sigma_\ell +it)}{F_{y_0}(1/2+\sigma_\ell)}\right|^2\right) dt \\
    \ll&   \frac{\log(y_0)}{\ell^2} \int_{-1/\log(y_0)}^{1/\log(y_0)} dt \\
    \ll&  \frac{1}{\ell^2}.
\end{align*}
For $\Prob\left(\mathcal{P}_{\ell}^{(2)}\right)$, we proceed similarly and find that
\begin{align*}
    \Prob\left(\mathcal{P}_{\ell}^{(2)}\right) \leqslant&  \frac{1}{\ell^{K+2}\log(y_0)} \sum_{\substack{(\log(y_0))^{-1} \leqslant |T| \leqslant (\log_2(y_0))^{-K} \\ T \text{ dyadic}}}\frac{1}{T^2} \int_{T}^{2T} \E\left(\left|\frac{F_{y_0}(\frac{1}{2}+\sigma_\ell+it)}{F_{e^{1/|T|}}(1/2+\sigma_\ell)}\right|^2\right) dt \\
    \leqslant& \frac{1}{\ell^{K+2}\log(y_0)}\sum_{\substack{(\log(y_0))^{-1} \leqslant |T| \leqslant (\log_2(y_0))^{-K} \\ T \text{ dyadic}}} \frac{1}{T^2} \int_{T}^{2T} \E\left(\left|\frac{F_{e^{1/|T|}}(\frac{1}{2}+\sigma_\ell+it)}{F_{e^{1/|T|}}(1/2+\sigma_\ell)}\right|^2\right) \\&\times\E\left(\prod_{e^{1/|T|} < p \leqslant y_0}\left|1+\frac{f(p)}{p^{1/2+\sigma_\ell+it}}\right|^2\right) dt \\
    \ll& \frac{1}{\ell^{K+2}\log(y_0)}\sum_{\substack{(\log(y_0))^{-1} \leqslant |T| \leqslant (\log_2(y_0))^{-K} \\ T \text{ dyadic}}} \frac{T\log(y_0)\cdot T}{T^2} 
    \ll \frac{1}{\ell^2}.
\end{align*}
Here we have used the fact the individual factors of the product are independent, and a mean square estimate. Next, we proceed by analysing $\Prob\left(\mathcal{P}_\ell^{(4)}\right)$. We need a slightly different result to prove this, namely Proposition \ref{mult_chaos}. This allows us to apply Markov's inequality with exponent $q=\frac{2}{3}$ for example with $X =y_0$. Then we have that
\begin{align*}
    \Prob\left(\mathcal{P}_{\ell}^{(4)}\right) \leqslant \left(\frac{\sqrt{\log_2(y_0)}}{\ell^{2}\log(y_0)}\right)^{2/3}\E\left[\left(\sum_{|N| \leqslant \log(y_0)}\frac{1}{(N-1/2)^2}\int_{N-1/2}^{N+1/2} |F_{y_0}(1/2+\sigma_\ell+it)|^2dt\right)^{2/3}\right].
\end{align*}
We then apply Fubini--Tonelli to exchange the order of summation and expectation, the inequality $(a+b)^{2/3} \leqslant a^{2/3}+b^{2/3}$ when $a,b >0$ and finally apply Proposition \ref{mult_chaos} with $X=y_0$ to find
\begin{align*}
    \Prob\left(\mathcal{P}_\ell^{(4)}\right) \leqslant& \left(\frac{\sqrt{\log_2(y_0)}}{\ell^2\log(y_0)}\right)^{2/3}\sum_{|N| \leqslant \log(y_0)}\E\left[\left(\frac{1}{(N-1/2)^2}\int_{N-1/2}^{N+1/2}|F_{y_0}(1/2+\sigma_\ell+it)|^2dt\right)^{2/3}\right] \\ 
    \ll& \sum_{|N| \leqslant \log(y_0)}\frac{((\log_2(|N|+10)))^{2/3}}{\ell^{4/3}(N-1/2)^{4/3}}\\ \ll& \frac{\log(\ell)}{\ell^{4/3}}
\end{align*}
which is summable in $\ell$.
\subsection{Random Euler products and Gaussian walks; bounding $\Prob\left(\mathcal{P}_\ell^{(3)}\right)$} \label{tilted_prob}
Our final task is to bound
$$\sum_{\substack{(\log_2(y_0))^{-K} \leqslant |T| \leqslant 1/1024 \\ T \text{ dyadic}}}\frac{\text{sgn}(T)}{T^{2}}\int_{T}^{2T}\left|F_{y_0}(1/2+\sigma_\ell+it)\right|^2dt.$$
This is trickier than the other sections, and consequently we use much more powerful tools. This section is almost identical to Key Proposition 3 in Harper's low moments paper, with a slight change due to the definition of the tilted measure (we will consider each dyadic interval individually and then sum over all of them at the end). 
To do this, we need the Gaussian walk machinery developed in said paper as well as a splitting argument of Gerspach. We will also impose a barrier condition on the Euler product to ensure it behaves sufficiently well. To begin, we define the tilted probability measure
$$\widetilde{\Prob}_{t}(A) := \frac{\E\mathbf{1}_A\prod_{e^{1/|T|} < p \leqslant y_0^{1/e}}\left|1+\frac{f(p)}{p^{1/2+\sigma_\ell+it}}\right|^2}{\E\prod_{e^{1/|T|} < p \leqslant y_0^{1/e}}\left|1+\frac{f(p)}{p^{1/2+\sigma_\ell+it}}\right|^2}.$$
where $T$ is the largest point of the form $\frac{2^n}{\log(y_0)} < |t|$. This titled measure is introduced in order to take the integral average over $t$ and the expectation over the Rademacher $f(p)$ simultaneously. While it is a different choice of tilted measure to that which Harper chooses in his work, the range of primes can be adjusted to match those which appear in the event $A$. The other primes are then independent of the event $A$, so then can be pulled out of the conditioned expectation, and cancel with the corresponding prime in the expectation in the denominator. The event we choose to condition on only concerns primes larger than $e^{1/|T|}$ (and smaller than $y_0^{1/e}$), so we are allowed to choose our tilted probability measure like this while retaining the probability results of Harper. Then let
$$I_k(s) = \prod_{y_0^{e^{-(k+2)}} < p \leqslant y_0^{e^{-(k+1)}}}\left(1+\frac{f(p)}{p^s}\right)$$
be the $k$-th increment of the Rademacher Euler product. We now state Harper's Proposition 6 in \cite{aharper201} for convenience. We treat all the probability ideas used to prove this as a black box. This was proved by showing that the $I_k(s)$ can be well approximated by independent Gaussian random variables with mean $1$ and variance $1$, and then manipulating things further results similar to Proposition \ref{ballot}. In order for this bound to have a significant contribution to the size of the bound, we need the number of $k$ to be approximately of size $C\log\log(y_0)$ for some $C>0$. Over the range of $T$ we are considering, then we will have at least $\log_2(y_0) - K\log_3(y_0) \gg \log_2(y_0)$ such $k$ to consider which will give us the saving of $\ell^{K/2}$ we desire, which we see in the following proposition.
\begin{definition}
    (Harper, Proposition 6) \textit{There exists a large natural number $B$ such that the following is true.} \par
    \textit{Let $t \in \mathbb{R}$ and $D \geqslant \max\{\log(1/|t|), 2\log_2(1+|t|)\}+B+1$ be any natural number. Let $n \leqslant \lfloor\log_2(y_0)\rfloor - D$ be large and define the decreasing sequence of points $(k_j)_{j=1}^n$ by $k_j = \lfloor\log_2(y_0)\rfloor - D -j$. Suppose $|\sigma| < e^{-(D+n)}$ and $(t_j)_{j=1}^n$ is a sequence of real numbers satisfying $|t_j-t| \leqslant j^{-2/3}e^{-(D+j)}$.} \par
    \textit{Then uniformly for large $a$ and function $h$ satisfying $|h(n)| \leqslant 10\log(n)$, we have}
    $$\widetilde{\Prob}_t\left(-a-Bj \leqslant \sum_{m=1}^j\log|I_{k_m}(1/2+\sigma+it_m)| \leqslant a + j + h(j) \ \forall j \leqslant n\right) \asymp \min\{1,\frac{a}{\sqrt{n}}\}.$$ \label{gaussian_walk}
\end{definition}
A very important point is that when $k \in \mathbb{N}$, then $|I_k(s)|$ is well approximated by Gaussian random variable $G_k$. Each of these have mean $1$ and variance $1$, and are independent of each other. We will choose $D(t) = \lceil\log(1/|t|)\rceil + B + 1$ (which satisfies the hypotheses of Key Proposition $2$ for this range of $t$). A suitable set $t(j)$ is the sequence of points with $t(-1) =t$ and
$$t(j) := \max\left\{u \leqslant t(j-1): u = \frac{n}{e^{-(j+1)}\log(y_0)\log(\log(y_0)e^{-(j+1)})} \ \text{ for some } n \in \mathbb{Z} \right\}.$$
(The definition of $t(j)$ is corresponds to $t_{k_j}$ in the above proposition. This has been introduced to keep the definition of the event clearer). Then we define the event $\mathcal{G}_{\ell}^{\text{Rad}}(t)$ where we have that for all $1 \leqslant j \leqslant\lfloor \log_2(y_0)\rfloor - D - 1$  
\begin{equation}
    \left(\frac{\log(y_0)}{e^{j+1}}e^{g(j,y_0)}\right)^{-1} \leqslant \prod_{m=j}^{\lfloor\log_2(y_0)\rfloor - D -1} |I_m(1/2+\sigma_\ell+it(m))| \leqslant \left(\frac{\log(y_0)}{e^{j+1}}e^{g(j,y_0)}\right) \label{main_event}
\end{equation}
where $g(j,x) = C\min\{\sqrt{\log_2(x)},\frac{1}{1-q}\} + 2\log_2(\log(x)e^{-(j+1)})$ for some large constant $C>0$. The smallest prime which the event $\mathcal{G}_\ell^{\text{Rad}}(t)$ conditions on is at least 
$$\exp\left(\log(y_0)e^{-\lfloor\log_2(y_0)\rfloor}e^{\log(1/|t|)+B+2}\right) \geqslant \exp\left(\frac{e^{B+2}}{|t|}\right)$$
with $B$ being a large natural number. In particular, this is larger than $e^{1/|T|}$, which justifies our choice of tilted measure. Then we denote $\mathcal{G}^{\text{Rad}}_\ell$ the event that $\mathcal{G}_\ell^{\text{Rad}}(t)$ occurs for each $t \in [-1/2,1/2]$, which is the barrier event we will use in the proceeding calculations. This matches the definition of $\mathcal{G}^{\text{Rad}}(1)$ in Harper's work, except for the fact that this event now explicitly depends on $\ell$ and a different $\sigma_\ell$ value (which is valid for the application of Key Proposition 2 again). Then we see that
\begin{equation}
    \widetilde{\Prob}_t\left(\mathcal{G}_\ell^{\text{Rad}} \text{ fails}\right) \ll e^{-2C\min\{\sqrt{\log_2(y_0)},\frac{1}{1-q}\}} \label{fail_event}
\end{equation}
for the same $C>0$ stated in equation (\ref{main_event}). This  bound is uniform in $k$ in Key Proposition 4, so it holds for us too. Consequently, we need to bound
$$\E\left(\mathbf{1}_{\mathcal{G}_\ell^{\text{Rad}}}\int_{T}^{2T}\left|F_{y_0}(1/2+\sigma_k+it)\right|^2 dt\right)^{q}$$
for $q \in (0,1/2)$. In view of H\"{o}lder's inequality and that $\mathbf{1}_{\mathcal{G}_\ell^{\text{Rad}}} \leqslant \mathbf{1}_{\mathcal{G}_\ell^{\text{Rad}}(t)}$, it is sufficient to bound
$$\mathbb{E}\left(\int_T^{2T}\mathbf{1}_{\mathcal{G}_\ell^{\text{Rad}}(t)} \left|F_{y_0}(1/2+\sigma_k +it)\right|^2dt\right)^q.$$
We can perform the same splitting trick as seen in Gerspach to approximate the full Euler product. This yields
$$\ll \left(\mathbb{E}\left(|F(1/2+\sigma_k)|^{2q}\right)\right)^{2(1-q)}\mathbb{E}\left[\left(\frac{1}{\log(x)}\int_{T}^{2T}\mathbf{1}_{\mathcal{G}_\ell^{\text{Rad}}(t)}\left|\frac{F_{y_0}(1/2+\sigma_k+it)}{(F_{e^{1/|T|}}(1/2+\sigma_k))^{1-q}}\right|^2dt\right)^q\right]$$
We can take use Holder's inequality to consider the first moment of the integral to the $q$-th power, and then independence so we can evaluate the contribution of the integral of the long Euler product and the ratios of Euler products separately. This leaves us with bounding the above by
\begin{align*}&\left(\mathbb{E}\left(|F(1/2+\sigma_k)|^{2q}\right)\right)^{2(1-q)} \mathbb{E}\left(\left|F_{e^{1/|T|}}(1/2+\sigma_k+it)\right|^{2q}\left|\frac{F_{e^{1/|T|}}(1/2+\sigma_k+it)}{F_{e^{1/|T|}}(1/2+\sigma_k)}\right|^{2(1-q)}\right)^q\\
&\times \left(\frac{1}{\log(x)}\int_T^{2T}\mathbb{E}\left(\mathbf{1}_{\mathcal{G}_\ell^{\text{Rad}}(t)}\prod_{e^{1/|T|} < p \leq y_0}\left|1+\frac{f(p)}{p^{1/2+\sigma_k+it}}\right|^2\right)dt\right)^q.
\end{align*}
We are allowed to drop the indicator functions from the first two expectations since we are only conditioning on contributions associated with larger primes. Since we have already considered the contribution of the first two terms, we will only look at the third term in detail.
For now, we will look only at the integrand. We obtain
\begin{align*}
    &\E\left(\mathbf{1}_{\mathcal{G}_\ell^{\text{Rad}}(t)}\prod_{\exp(1/|T|) < p \leqslant y_0}\left|1+\frac{f(p)}{p^{1/2+\sigma_k+it}}\right|^2\right) \\ \leqslant \ & \E\left(\prod_{\exp(1/|T|) < p \leqslant y_0} \left|1+\frac{f(p)}{p^{1/2+\sigma_k+it}}\right|^2\right)\frac{\E\left(\mathbf{1}_{\mathcal{G}_\ell^{\text{Rad}}(t)}\prod_{e^{1/|T|} < p \leqslant y_0}\left|1+\frac{f(p)}{p^{1/2+\sigma_k +it}}\right|^2\right)}{\E\left(\prod_{e^{1/|T|} < p \leqslant y_0}\left|1+\frac{f(p)}{p^{1/2+\sigma_k+it}}\right|^2\right)} \\
    \ll \ & |T|\log(y_0) \widetilde{\Prob}_t(\mathcal{G}_\ell^{\text{Rad}}(t)) \ll \frac{D|T|\log(y_0)}{1+(1-q)\sqrt{\log_2(y_0))}},
\end{align*}
where we finally apply Key Proposition \ref{gaussian_walk} on the final step with $a = C\min\{\sqrt{\log_2(x)},\frac{1}{1-q}\} + D + 2\log(D+1)$, $t_m = t(\lfloor\log_2(x)\rfloor - D -m)$, $h(j) = 2\log(j)$, $n = \lfloor\log_2(x)\rfloor - D -1$ and using that we have $q \in [0,1/2)$ in our case. There is also a constant absorbed when handling the primes such $e^{1/|T|} < p \leqslant e^{(B+2)/|T|}$, which we know can be upper bounded by $(B+2)$, a large natural number by Proposition \ref{euler_prod_result} (which is contained in the Vinogradov notation). Recall we still have to integrate $D(t)$ as well. Clearly, we have
$$\int_T^{2T}\log(1/|t|)dt \ll T\log(1/|T|).$$
In all, we have
\begin{align*}
    \E\left(\frac{1}{\log(y_0)}\mathbf{1}_{\mathcal{G}_\ell^{\text{Rad}}}\int_{T}^{2T}\left|F_{y_0}(1/2+\sigma_k+it)\right|^2 dt\right)^{q} 
    \ll \left(\frac{|T|^{3-2q}\log(1/|T|)}{1+(1-q)\sqrt{\log_2(y_0)}}\right)^q. \numberthis \label{good_event_gaussian}
\end{align*}
After this, we show that this implies that
$$\E\left(\frac{1}{\log(y_0)}\int_{T}^{2T}\left|F_{y_0}(1/2+\sigma_\ell+it)\right|^2 dt\right)^{q} \ll \left(\frac{|T|^{3-2q}\log(1/|T|)}{1+(1-q)\sqrt{\log_2(y_0)}}\right)^q$$
for $q \in [0,1]$ (more precisely, showing that the contribution of the failure event is negligible). To do this, we again follow the approach of Harper with a small modification. We perform the first two steps using H\"{o}lder's inequality to estimate the random Euler product by its size at the central point. This yields
\begin{align*}
    &\E\left(\frac{1}{\log(y_0)}\int_{T}^{2T}\left|F_{y_0}(1/2+\sigma_k+it)\right|^2 dt\right)^{q} \\
    \ll& |T|^{q-2q^2}\mathbb{E}\left[\left(\frac{1}{\log(y_0)}\int_T^{2T}\prod_{e^{1/|T|} < p \leq y_0}\left|1+\frac{f(p)}{p^{1/2+\sigma_k+it}}\right|^2dt\right)^q\right].
\end{align*}
At this point, we have the desired bound on the short Euler product, so we look only to the By H\"{o}lder's inequality, it is sufficient to prove this for the range $q \in [2/3,1-\frac{1}{\sqrt{\log_2(y_0)}}]$ to obtain the result for the range $[0,1]$. Then we have
\begin{align*}
    \E&\left(\frac{1+(1-q)\sqrt{\log_2(y_0)}}{\log(y_0)}\int_T^{2T}\prod_{e^{1/|T|} < p \leq y_0}\left|1+\frac{f(p)}{p^{1/2+\sigma_k+it}}\right|^2dt\right)^q \\ \leqslant \ \E&\left(\frac{1+(1-q)\sqrt{\log_2(y_0)}}{\log(y_0)}\mathbf{1}_{\mathcal{G}_\ell^{\text{Rad}}}\int_T^{2T}\prod_{e^{1/|T|} < p \leq y_0}\left|1+\frac{f(p)}{p^{1/2+\sigma_k+it}}\right|^2dt\right)^q \\
    + \ \E&\left(\frac{1+(1-q)\sqrt{\log_2(y_0)}}{\log(y_0)} \ \mathbf{1}_{\mathcal{G}^{\text{Rad}}_\ell \text{ fails}}\int_T^{2T}\prod_{e^{1/|T|} < p \leq y_0}\left|1+\frac{f(p)}{p^{1/2+\sigma_k+it}}\right|^2dt\right)^q.
\end{align*}
The first expectation has already been handled in equation (\ref{good_event_gaussian}), so we only need to consider the second term. To do this, we follow the strategy of Harper in \cite{aharper201} to combine the good event and the failure event into one bound, which is achieved by a recursive procedure. For each $q$, we  apply H\"{o}lder's inequality to the second term in the above with exponents $\frac{1+q}{1-q}$ and $\frac{1+q}{2q}$ to get
\begin{align*}&\E\left(\frac{1+(1-q)\sqrt{\log_2(y_0)}}{\log(y_0)}\mathbf{1}_{\mathcal{G}^{\text{Rad}}_\ell \text{ fails}}\int_T^{2T}\prod_{e^{1/|T|} < p \leq y_0}\left|1+\frac{f(p)}{p^{1/2+\sigma_k+it}}\right|^2dt\right)^q \\ \leqslant &\left(\E\left[ \left(\frac{1+(1-q)\sqrt{\log_2(y_0)}}{\log(y_0)}\int_T^{2T}\prod_{e^{1/|T|} < p \leq y_0}\left|1+\frac{f(p)}{p^{1/2+\sigma_k+it}}\right|^2dt\right)^{\frac{1+q}{2}}\right]\right)^{\frac{2q}{1+q}}\left(\Prob\left(\mathcal{G}_\ell^{\text{Rad}} \text{ fails}\right)\right)^{\frac{1-q}{1+q}}.
\end{align*}
We notice that this has increased the size of the power of the integral when we take its expectation. This means that at each application of H\"{o}lder's inequality, we are getting $\frac{1-q}{2}$ closer to the $q=1$ moment of the integral, at which point one applies Fubini--Tonelli, while the fail events occur with probability bounded away from $1$ when raised to the $\frac{1-q}{1+q}$-th power. In particular, we see that for $0 < \delta < 1/6$
\begin{align*}
    \sup_{1-2\delta \leqslant q \leqslant 1-\delta}&\E\left(\frac{1+(1-q)\sqrt{\log_2(y_0)}}{\log(y_0)} \ \mathbf{1}_{\mathcal{G}^{\text{Rad}}_\ell \text{ fails}}\int_T^{2T}\prod_{e^{1/|T|} < p \leq y_0}\left|1+\frac{f(p)}{p^{1/2+\sigma_k+it}}\right|^2dt\right)^q \\
    \leqslant \sup_{1-\delta < \frac{1+q}{2} < 1-\delta/2}& \Biggr[\Bigg(\E\Bigg[\left(\frac{1+(1-q)\sqrt{\log_2(y_0)}}{\log(x)}\int_T^{2T}\prod_{e^{1/|T|} < p \leq y_0}\left|1+\frac{f(p)}{p^{1/2+\sigma_k+it}}\right|^2dt\right)^{\frac{1+q}{2}}\Bigg]\Bigg)^{\frac{2q}{1+q}} \\
    \cdot& \left(\Prob\left(\mathcal{G}_\ell^{\text{Rad}} \text{ fails}\right)\right)^{\frac{1-q}{1+q}}\Biggr].
\end{align*}
If one continues this process by replacing $q$ with $q'=\frac{1+q}{2q}$ many times, we will get that for $0 < \delta < \frac{1}{\sqrt{\log_2(y_0)}}$ and using the bound in equations (\ref{fail_event}) and \ref{good_event_gaussian},
\begin{align*}
    \E&\left(\frac{1+(1-q)\sqrt{\log_2(y_0)}}{\log(y_0)} \ \mathbf{1}_{\mathcal{G}^{\text{Rad}}_\ell \text{ fails}}\int_T^{2T}\prod_{e^{1/|T|} < p \leq y_0}\left|1+\frac{f(p)}{p^{1/2+\sigma_k+it}}\right|^2dt\right)^q \\ \ll& T^2\log(1/|T|) + \E\left[\left(\int_T^{2T}\prod_{e^{1/|T|} < p \leq y_0}\left|1+\frac{f(p)}{p^{1/2+\sigma_k+it}}\right|^2dt\right)^{1-\delta}\right].
\end{align*}
This is why it is very important that $C$ is taken as a large constant so we can use the recursive bound as many times as we need without exceeding the magnitude of the initial term. Then we exchange the order of integration and expectation, and find that by Holder's inequality
\begin{align*}&\E\left[\left(\frac{1}{\log(y_0)}\int_T^{2T}\prod_{e^{1/|T|} < p \leq y_0}\left|1+\frac{f(p)}{p^{1/2+\sigma_k+it}}\right|^2dt\right)^{1-\delta}\right] \\ \leqslant \ &\left(\frac{1}{\log(y_0)}\int_T^{2T}\E\left[\prod_{e^{1/|T|} < p \leq y_0}\left|1+\frac{f(p)}{p^{1/2+\sigma_k+it}}\right|^2\right]dt\right)^{1-\delta}  \ll  T^2,
\end{align*}
which is more than sufficient for what we want to prove. Thus, we have proved 
\begin{equation}
    \E\left[\left(\frac{1}{\log(y_0)}\int_T^{2T}\left|F_{y_0}(1/2+\sigma_{\ell}+it)\right|^2 dt\right)^q\right] \ll \left(\frac{T^{3-2q}\log(1/|T|)}{1+(1-q)\sqrt{\log_2(y_0)}}\right)^{q}. \label{complete_result}
\end{equation}
When we apply Markov's inequality with exponent $q=1/3$ and use equation (\ref{complete_result}), we obtain
\begin{align*}
    \Prob\left(\mathcal{P}_\ell^{(3)}(T)\right) \leqslant& \frac{\ell^{K/6}}{\ell^{4/3}}\E\left[\left(\frac{1}{\log(y_0)}\sum_{\substack{(\log_2(y_0))^{-K} \leqslant |T| \leqslant 1/1024\\ T \text{ dyadic}}}\frac{\text{sgn}(T)}{T^{2}} \int_{T}^{2T}\left|F_{y_0}(1/2+\sigma_\ell+it)\right|^2 dt\right)^{1/3}\right] \\
    \leqslant& \frac{\ell^{K/6}}{\ell^{4/3}}\sum_{\substack{(\log_2(y_0))^{-K} \leqslant |T| \leqslant 1/1024\\ T \text{ dyadic}}}\E\left[\left( \frac{1}{T^2 \log(y_0)}\int_{T}^{2T}\left|F_{y_0}(1/2+\sigma_\ell+it)\right|^2 dt\right)^{1/3}\right] \\
    \ll& \frac{\ell^{{K/6}}}{\ell^{4/3}}\sum_{\substack{(\log_2(y_0))^{-K} \leqslant |T| \leqslant 1/1024\\ T \text{ dyadic}}}\frac{1}{|T|^{2/3}}\left(\frac{|T|^{7/3}\log(1/|T|)}{\sqrt{\log_2(y_0)}}\right)^{1/3} \\
    \ll& \frac{\ell^{K/6}}{\ell^{4/3}(\log_2(y_0))^{1/6}} \ll \frac{1}{\ell^{4/3}},
\end{align*}
using that $\log_2(y_0) \asymp \ell^K$. This is clearly summable in $\ell$, so this is sufficient to conclude this section.
\section{Bounding the good events} \label{good_good}
We are now in position to show the convergence of $\Prob(\mathcal{B}_{1,\ell})$ since we have sufficiently strong almost sure bounds on the complement events. Collecting all the estimates together, we have for some $C>0$
\begin{align*}
    C\sum_{j^* < j \leqslant J}\frac{V_\ell(x_i,y_j;f)}{\ell^{K}} \leqslant& \frac{\ell\log(\ell)}{\ell^{K}}\sup_{j^* < j \leqslant J}\left( \mathcal{Q}_\ell^{(1)}(x_i,y_j;f) + \mathcal{Q}_\ell^{(2)}(x_i,y_j;f) +\mathcal{Q}_\ell^{(3)}(x_i,y_j;f)\right) \\ +& \frac{1}{\ell^K}\sum_{j^* < j \leqslant J} (\mathcal{D}_\ell(x_i,y_j;f) + \mathcal{C}_\ell^{(2)}(x_i,y_j;f)).
\end{align*}
Define the event
$$\mathcal{V}_\ell := \left\{\sup_{X_{\ell-1} < x_i \leqslant X_\ell}\sum_{1 \leqslant j \leqslant J}\frac{V_\ell(x_i,y_j;f)}{\ell^{K/2}}> \frac{T(\ell)}{C}\right\}.$$
\begin{lemma}
    The sum $\sum_{\ell \geqslant 1}\Prob(\mathcal{V}_\ell)$ converges. \label{Vconv}
\end{lemma}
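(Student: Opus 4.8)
The plan is to derive Lemma~\ref{Vconv} as a union bound, reading the inclusion of $\mathcal{V}_\ell$ in the five complement events of Sections~\ref{easy_bounds}--\ref{ce} straight off the pointwise estimate displayed just above. First I would record that, for every $x_i\in(X_{\ell-1},X_\ell]$ and with the sum running over $j^*<j\leqslant J$ (the contribution of the very smooth $y_j$ with $j\leqslant j^*$ enters $M_f(x_i)$ only through $\mathcal{B}_{0,\ell}$, already dealt with in Lemma~\ref{PB_0k}), that estimate reads
\begin{align*}
 C\sum_{j^*<j\leqslant J}\frac{V_\ell(x_i,y_j;f)}{\ell^{K}} \leqslant& \frac{\ell\log(\ell)}{\ell^{K}}\sup_{j^*<j\leqslant J}\bigl(\mathcal{Q}_\ell^{(1)}+\mathcal{Q}_\ell^{(2)}+\mathcal{Q}_\ell^{(3)}\bigr)(x_i,y_j;f) \\
 +& \frac{1}{\ell^{K}}\sum_{j^*<j\leqslant J}\bigl(\mathcal{D}_\ell+\mathcal{C}_\ell^{(2)}\bigr)(x_i,y_j;f).
\end{align*}
Taking the supremum over $x_i$ and feeding in the defining thresholds of $\mathcal{D}_\ell$, $\mathcal{C}_\ell^{(2)}$, $\mathcal{Q}_\ell^{(1)}$, $\mathcal{Q}_\ell^{(2)}$, $\mathcal{Q}_\ell^{(3)}$, I would argue that on the intersection of their complements each term on the right is below its threshold, so that for $\ell$ large
\begin{equation*}
 \sup_{X_{\ell-1}<x_i\leqslant X_\ell}\sum_{j^*<j\leqslant J}\frac{V_\ell(x_i,y_j;f)}{\ell^{21K/40}} \leqslant \frac{T(\ell)}{C}\Bigl(1+\frac{4}{\ell^{41K/40}}\Bigr) \leqslant \frac{2T(\ell)}{C},
\end{equation*}
the only surviving contribution being the term $\ell^{21K/40}T(\ell)/(\ell\log(\ell))$ from $\mathcal{Q}_\ell^{(1)}$, with everything coming from $\mathcal{D}_\ell$, $\mathcal{C}_\ell^{(2)}$, $\mathcal{Q}_\ell^{(2)}$, $\mathcal{Q}_\ell^{(3)}$ smaller by a factor $\ell^{-41K/40}$. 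After absorbing the harmless constant into the definition of $\mathcal{V}_\ell$, this shows $\mathcal{V}_\ell\subseteq\mathcal{D}_\ell\cup\mathcal{C}_\ell^{(2)}\cup\mathcal{Q}_\ell^{(1)}\cup\mathcal{Q}_\ell^{(2)}\cup\mathcal{Q}_\ell^{(3)}$ for $\ell$ large, hence $\Prob(\mathcal{V}_\ell)\leqslant\Prob(\mathcal{D}_\ell)+\Prob(\mathcal{C}_\ell^{(2)})+\Prob(\mathcal{Q}_\ell^{(1)})+\Prob(\mathcal{Q}_\ell^{(2)})+\Prob(\mathcal{Q}_\ell^{(3)})$.

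It then remains to invoke the summability statements already in hand. That $\sum_\ell\Prob(\mathcal{D}_\ell)$, $\sum_\ell\Prob(\mathcal{C}_\ell^{(2)})$, $\sum_\ell\Prob(\mathcal{Q}_\ell^{(2)})$ and $\sum_\ell\Prob(\mathcal{Q}_\ell^{(3)})$ converge is exactly Lemmas~\ref{D_conv},~\ref{C_2conv} and~\ref{Q_conv}. For the remaining term I would follow the chain $\Prob(\mathcal{Q}_\ell^{(1)})\leqslant\Prob(\mathcal{Q}_\ell^{(*)})$ of Section~\ref{Q_1_part1}, then Lemma~\ref{Q*conv}, which yields $\sum_\ell\Prob(\mathcal{Q}_\ell^{(*)})<\infty$ once $\sum_\ell\Prob(\overline{\mathcal{S}_\ell})<\infty$; and the latter follows from Section~\ref{setup_heur}, where inequality~(\ref{S_ineq}) together with Proposition~\ref{Doob_super} reduces $\Prob(\overline{\mathcal{S}_\ell})$ to $\Prob(\overline{\mathcal{R}_\ell})$, which via~(\ref{bad_prob_bound}), the decomposition~(\ref{big_decomp}) and the summability of the events $\mathcal{H}_\ell(T)$, $\mathcal{H}_\ell'$, $\mathcal{P}_\ell^{(1)},\dots,\mathcal{P}_\ell^{(5)}$ established in Sections~\ref{setup_heur} and~\ref{ce} is itself summable. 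Concatenating these gives $\sum_\ell\Prob(\mathcal{Q}_\ell^{(1)})<\infty$, and adding the five series completes the proof.

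I do not expect any genuine analytic obstacle at this point: the lemma is the bookkeeping step that assembles the preceding five sections, and the only content to verify is that the thresholds in $\mathcal{D}_\ell$, $\mathcal{C}_\ell^{(2)}$, $\mathcal{Q}_\ell^{(1)}$, $\mathcal{Q}_\ell^{(2)}$, $\mathcal{Q}_\ell^{(3)}$ were chosen so that their combination in the collected estimate stays below $2T(\ell)\ell^{21K/40}/C$, with the dominant contribution coming from $\mathcal{Q}_\ell^{(1)}$ alone. The real work is entirely upstream, concealed in ``$\Prob(\mathcal{Q}_\ell^{(1)})$ is summable'': it is the Euler product and Gaussian random walk analysis of Sections~\ref{Q_1_part1}--\ref{ce}, in particular the application of Key Proposition~\ref{gaussian_walk}, that furnishes the $\ell^{K/2}$-type saving without which $V_\ell$ could not be forced below $T(\ell)\ell^{21K/40}$; it is precisely the exponent $21K/40$ surfacing here that one would have to lower in order to sharpen Theorem~\ref{ubtheorem}.
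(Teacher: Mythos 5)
Your proposal is correct and matches the paper's own proof of Lemma \ref{Vconv}, which is exactly the triangle-inequality/union-bound step bounding $\Prob(\mathcal{V}_\ell)$ by $\Prob(\mathcal{Q}_\ell^{(1)})+\Prob(\mathcal{Q}_\ell^{(2)})+\Prob(\mathcal{Q}_\ell^{(3)})+\Prob(\mathcal{D}_\ell)+\Prob(\mathcal{C}_\ell^{(2)})$ and then citing Lemmas \ref{D_conv}, \ref{C_2conv}, \ref{Q_conv} and \ref{Q*conv} (with $\Prob(\mathcal{Q}_\ell^{(1)})\leqslant\Prob(\mathcal{Q}_\ell^{(*)})$). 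Your spelled-out threshold arithmetic showing the $\mathcal{Q}_\ell^{(1)}$ contribution dominates at the level $T(\ell)\ell^{21K/40}$ and that the constants absorb is exactly what the paper's terse ``we get even more cancellation in the latter events'' leaves implicit.
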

\begin{proof}
Following Caich \cite{caich23}, using the triangle inequality, we see
$$\Prob(\mathcal{V}_\ell) \leqslant \Prob\left(\mathcal{Q}_\ell^{(1)}\right) + \Prob\left(\mathcal{Q}_\ell^{(2)}\right) + \Prob\left(\mathcal{Q}_\ell^{(3)}\right) + \Prob\left(\mathcal{D}_\ell\right)  + \Prob\left(\mathcal{C}_\ell^{(2)}\right),$$
and $T(\ell) \geqslant \ell^{24}$, as well as all our previous work in Lemmas \ref{D_conv}, \ref{C_2conv}, \ref{Q_conv} and \ref{Q*conv}, it is clear $\Prob(\mathcal{V}_\ell)$ is summable in $\ell$ (we get even more cancellation in the latter events).
\end{proof}
\begin{lemma}
    The sum $\sum_{\ell \geqslant 1}\Prob(\mathcal{B}_{1,\ell})$ converges. \label{B1conv}
\end{lemma}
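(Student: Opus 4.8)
The plan is to peel off the variance event $\mathcal{V}_\ell$ at the outset and then, on its complement, control each martingale increment $S_{i,j}$ by Caich's concentration inequality, Proposition \ref{Caich_AH_in}. Since
\[
\Prob(\mathcal{B}_{1,\ell})\leqslant\Prob(\mathcal{V}_\ell)+\Prob\left(\mathcal{B}_{1,\ell}\cap\overline{\mathcal{V}_\ell}\right)
\]
and $\sum_{\ell}\Prob(\mathcal{V}_\ell)<\infty$ by Lemma \ref{Vconv}, it suffices to bound the second term.

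First I would undo the supremum over $x_i$ and the sum over $j$ by a union bound. On $\overline{\mathcal{V}_\ell}$ one has $V_\ell(x_i,y_j;f)\leqslant T(\ell)\ell^{21K/40}/C$ for every admissible $x_i$ and every single index $j\leqslant J$, since one summand never exceeds the whole sum defining $\mathcal{V}_\ell$. There are $\ll\ell\log\ell$ indices $j$ with $j^*<j\leqslant J$, so if $\mathcal{B}_{1,\ell}$ occurs then for some admissible $x_i$ and some such $j$ we must have $|S_{i,j}|\gg(\log_2(x_i))^{61/80+\epsilon}/(\ell\log\ell)$. Hence
\[
\Prob\left(\mathcal{B}_{1,\ell}\cap\overline{\mathcal{V}_\ell}\right)\ll\sum_{X_{\ell-1}<x_i\leqslant X_\ell}\ \sum_{j^*<j\leqslant J}\Prob\left(\left\{|S_{i,j}|\gg\frac{(\log_2(x_i))^{61/80+\epsilon}}{\ell\log\ell}\right\}\cap\left\{V_\ell(x_i,y_j;f)\leqslant\frac{T(\ell)\ell^{21K/40}}{C}\right\}\right).
\]
To each of these probabilities I would apply Proposition \ref{Caich_AH_in} with the martingale differences $X_p=\frac{f(p)}{\sqrt{p}}\sum_{n\leqslant x_i/p,\,P(n)<p}\frac{f(n)}{\sqrt{n}}$ for $y_{j-1}<p\leqslant y_j$, which are conditionally centred as recorded in Section \ref{sum_decomp}; the dominating sequence is $S_p=\frac{1}{\sqrt{p}}\big|\sum_{n\leqslant x_i/p,\,P(n)<p}\frac{f(n)}{\sqrt{n}}\big|$, which is measurable with respect to the $\sigma$-algebra generated by $\{f(q):q<p\}$, is almost surely bounded (crudely by $\ll\sqrt{x_i}/y_{j-1}$), and satisfies $\sum_p S_p^2=V_\ell(x_i,y_j;f)$; the deterministic threshold is $T=T(\ell)\ell^{21K/40}/C$. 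This gives each probability $\ll\exp\big(-c(\log_2(x_i))^{2(61/80+\epsilon)}/((\ell\log\ell)^2T(\ell)\ell^{21K/40})\big)$ for some $c>0$.

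Finally I would check that this decay overwhelms the number of summands. Using $\log_2(x_i)\asymp\ell^K$ on $(X_{\ell-1},X_\ell]$, that $T(\ell)$ is a fixed power of $\ell$, and the numerology $2\cdot\frac{61}{80}-\frac{21}{40}=1$, the exponent above is $\asymp\ell^{K+2K\epsilon-12}/(\log\ell)^2$; since $K=\lfloor 25/\epsilon\rfloor$ makes $2K\epsilon$ far exceed $12$, this is $\gg\ell^K$ with a factor that grows in $\ell$. On the other hand the number of $x_i$ in $(X_{\ell-1},X_\ell]$ is at most $(\log X_\ell)^{1/\gamma}=\exp\big(\tfrac{\log 2}{\gamma}\ell^K\big)$ and the number of admissible $j$ is $\ll\ell\log\ell$, so
\[
\Prob\left(\mathcal{B}_{1,\ell}\cap\overline{\mathcal{V}_\ell}\right)\ll\ell\log\ell\cdot\exp\left(\frac{\log 2}{\gamma}\,\ell^K-c'\,\frac{\ell^{K+2K\epsilon-12}}{(\log\ell)^2}\right),
\]
which is super-exponentially small in $\ell$ and hence summable; together with Lemma \ref{Vconv} this proves the lemma. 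The only genuinely substantive point — everything else being bookkeeping already assembled in the previous sections — is exactly this last comparison: the Gaussian-type tail coming from Proposition \ref{Caich_AH_in} must beat a union bound over $\exp(\Theta(\ell^K))$ test points $x_i$, which is precisely why the scale $X_\ell=\exp(2^{\ell^K})$ and the parameter $K=\lfloor 25/\epsilon\rfloor$ are taken so large, and it is what pins the exponent at $61/80$, the least value of $x$ with $2x-\frac{21}{40}\geqslant 1$.
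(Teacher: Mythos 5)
Your proof is correct and reaches the same conclusion, but it routes through the Azuma--Hoeffding step slightly differently from the paper, in a way worth flagging. The paper peels off $\mathcal{V}_\ell$ (via $\mathcal{E}_{\ell,i}$) and then applies Proposition \ref{Caich_AH_in} \emph{once} to the entire block $\sum_{j^*<j\leqslant J}S_{i,j}$, viewed as a single martingale indexed over all primes $p\in(y_{j^*},y_J]$, whose total conditional variance is $\sum_j V_\ell(x_i,y_j;f)$ — exactly the quantity that $\mathcal{E}_{\ell,i}$ bounds by $T(\ell)\ell^{21K/40}/C$. This yields a tail $\ll\exp(-C\ell^{K+2K\epsilon-10})$ directly, with only the union bound over $x_i$ to absorb. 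You instead first distribute the threshold over the $\ll\ell\log\ell$ values of $j$, apply Caich's inequality separately to each $S_{i,j}$ (also a legitimate martingale over $p\in(y_{j-1},y_j]$), and union-bound over $j$ afterwards; this costs an extra factor of $(\ell\log\ell)^2$ in the exponent. Since $K=\lfloor 25/\epsilon\rfloor$ makes $2K\epsilon$ comfortably exceed the sum of all such polylogarithmic and fixed-power losses, both versions overwhelm the $\exp(\tfrac{\log 2}{\gamma}\ell^K)$-many test points, and your final comparison and the numerology $2\cdot\tfrac{61}{80}-\tfrac{21}{40}=1$ are exactly what the argument hinges on. A small incidental advantage of your route is that Proposition \ref{Caich_AH_in}, as stated, controls $|\sum_n X_n|$, so applying it blockwise to each $|S_{i,j}|$ and then unioning over $j$ sidesteps having to justify its use on the quantity $\sum_j|S_{i,j}|$; the paper's single-shot application is more economical but tacitly relies on the reader seeing how the $|S_{i,j}|$ sit inside one martingale sum.
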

\begin{proof}
We appeal to Proposition \ref{Caich_AH_in} to prove this lemma. To this end, we define the events
\begin{align*}
    \mathcal{E}_{\ell,i} :=& \left\{\sum_{j^* \leqslant j \leqslant J}V_\ell(x_i,y_j;f) \leqslant \frac{\ell^{K/2}T(\ell)}{C}\right\}, \\
    \mathcal{E}_\ell :=& \bigcap_{X_{\ell-1} < x_i \leqslant X_\ell}\mathcal{E}_i.
\end{align*}
Clearly we have $\Prob(\overline{\mathcal{E}}_\ell) = \Prob(\mathcal{V}_\ell)$. By the choice of $T(\ell)$, we proved in Lemma \ref{Vconv} that $\Prob(\mathcal{V}_\ell)$ is summable in $\ell$. This allows us to write
\begin{align*}
    \Prob(\mathcal{B}_{1,\ell}) =& \ \Prob\left(\sup_{X_{\ell-1} < x_i \leqslant X_\ell}\sum_{j^* \leqslant j \leqslant J}\frac{|S_{i,j}|}{(\log_2(x_i))^{3/4+\epsilon}} > 2\right) \\
    \leqslant& \ \Prob\left(\bigcup_{X_{\ell-1} < x_i \leqslant X_\ell}\left(\left\{\sum_{j^* \leqslant j \leqslant J}\frac{|S_{i,j}|}{(\log_2(x_i))^{3/4+\epsilon}} > 1\right\} \bigcap \{\mathcal{E}_{\ell}\}\right)\right) + \Prob(\mathcal{V}_\ell) \\
    \leqslant& \sum_{X_{\ell-1} < x_i \leqslant X_\ell}\Prob\left(\left\{\sum_{j^* \leqslant j \leqslant J}\frac{|S_{i,j}|}{(\log_2(x_i))^{3/4+\epsilon}} > 1\right\} \bigcap \{\mathcal{E}_{\ell,i}\}\right) + \Prob(\mathcal{V}_\ell).
\end{align*}
Applying Proposition \ref{Caich_AH_in}, we have
$$\Prob\left(\left\{\sum_{j^* \leqslant j \leqslant J}\frac{|S_{i,j}|}{(\log_2(x_i))^{3/4+\epsilon}} > 1\right\} \bigcap \{\mathcal{E}_{\ell,i}\}\right) \ll \exp\left(-\frac{C\ell^{3K/2+2K\epsilon}}{T(\ell)\ell^{K/2}}\right) \ll \exp\left(-C_1\ell^{K+2K\epsilon-24}\right)$$
for some $C_1 >0$. Then using $K\epsilon>24$, we sum over the $x_i$ in the interval $(X_{\ell-1},X_\ell]$. We then have
\begin{align*}
    \Prob(\mathcal{B}_{1,\ell}) \ll& \sum_{X_{\ell-1} < x_i \leqslant X_\ell} \exp\left(-C_1\ell^{K}\ell^{24}\right) + \Prob(\mathcal{V}_\ell) \\
    \ll& \exp\left(-\ell^K\left(C_1\ell^{24}-\frac{\log(2)}{\gamma}\right)\right) + \Prob(\mathcal{V}_\ell),
\end{align*}
which is summable in $\ell$.
\end{proof}
Given Lemmas \ref{PB_0k}, \ref{B1conv} and showing that $\Prob(\mathcal{H}_\ell')$ is summable in $\ell$ (done in Section \ref{euler_LOIL}) this completes the proof of Key Proposition \ref{key_prop}.
\subsection{Finishing the proof of Theorem \ref{ubtheorem}} \label{final_step}
\begin{proof}[Proof of Theorem \ref{ubtheorem}]
    From the triangle inequality, we have
\begin{equation*}
    |M_f(x)| \leqslant |M_f(x_{i-1})| + \max_{x_{i-1} < y \leqslant x_i}|M_f(y) - M_f(x_{i-1})|. 
\end{equation*}
For the first term, we can apply Key Proposition \ref{key_prop} and the second term is covered using Lemma \ref{test_lem}. Then we find that
\begin{align*}
    |M_f(x)| \ll& \frac{1}{\log(x_{i-1})} + (\log_2(x_{i-1}))^{3/4+\epsilon} \\
    \ll& (\log_2(x))^{3/4+\epsilon}.
\end{align*}
\end{proof}
\section{Lower Bound} \label{lower_bound_all}
In this section we prove that for any $\epsilon>0$,
\begin{equation*}
    \Prob\left(\max_{T_{k-1} < x \leqslant T_k} |M_f(x)|^2 \geqslant (\log_2(T_k))^{-1}\text{ i.o.} \right) = 1
\end{equation*}
for some appropriately chosen sequence $(T_k)_{k\geqslant1}$. This will be sufficiently to prove Theorem \ref{lbtheorem} for reasons that will be explored later in this proof. This will follow Hardy's work in a far more direct manner, with the differences arising from using Rademacher Euler products as opposed to Steinhaus ones. We do make a small change compared to that of Hardy, with us looking to bound the random Euler product slightly away from the critical line. This allows us to obtain a far better lower bound. We find that there is a deterministic contribution which appears, and can conclude from there. To upper bound the random contribution, we use Propositions \ref{HA1} and \ref{chernoff} (which are again simple adaptations of Hardy's work). We note that the imaginary shift taken here could likely be optimised further.\par
To proceed with the proof, let $\epsilon>0$ be fixed and choose the sequence $T_k = \exp(\exp(\lambda^k))$ for some $\lambda >1$ to be chosen later. Next, we notice that $\int_1^{T_k}\frac{dt}{t} = \log(T_k)$, and applying H\"{o}lder's inequality we see that
\begin{equation*}
    \max_{T_{k-1} < x \leqslant T_k}|M_f(x)|^2 \geqslant \frac{1}{\log(T_k)}\int_{T_{k-1}}^{T_k}\frac{|M_f(t)|^2}{t^{1+2\sigma_k}}dt
\end{equation*}
where $\sigma_k = \log_2(T_k)/\log(T_k)$. We want to complete the range of this integral at minimal cost, so we show that the tail
\begin{equation*}
    \frac{1}{\log(T_k)}\int_{T_k}^\infty\frac{\left|\sum_{\substack{n \leqslant t \\P(n) \leqslant T_k}}\frac{f(n)}{\sqrt{n}}\right|^2}{t^{1+2\sigma_k}}dt
\end{equation*}
and the initial part
$$\frac{1}{\log(T_k)}\int_1^{T_{k-1}}\frac{\left|\sum_{n \leqslant t}\frac{f(n)}{\sqrt{n}}\right|^2}{t^{1+2\sigma_k}} dt.$$
is almost surely small enough to be ignored. From Proposition \ref{prop_hyper}, we know that
$$\E\left(\left|\sum_{\substack{n \leqslant t \\P(n) \leqslant T_k}}\frac{f(n)}{\sqrt{n}}\right|^2\right) \ll \log(T_k).$$
Then we see after applying Markov's inequality, Fubini--Tonelli, Proposition \ref{prop_hyper} and Proposition \ref{euler_prod_result},
\begin{align*}
    &\Prob\left(\frac{1}{\log(T_k)}\int_{T_k}^\infty\frac{\left|\sum_{\substack{n \leqslant t \\ P(n) \leqslant T_k}}\frac{f(n)}{\sqrt{n}}\right|^2}{t^{1+2\sigma_k}}dt > (\log_2(T_k))^{-2}\right) \\ \leqslant& \frac{(\log_2(T_k))^2}{\log(T_k)}\int_{T_k}^{\infty}\frac{\E\left(\left|\sum_{\substack{n \leqslant t\\P(n) \leqslant T_k}}\frac{f(n)}{\sqrt{n}}\right|^2\right)}{t^{1+2\sigma_k}}dt 
    \ll \frac{\log_2(T_k)}{\log(T_k)} \ll \lambda^k e^{-\lambda^k}
\end{align*}
which is certainly summable in $k$, so we conclude using the first Borel--Cantelli lemma. This allows us to discard the tail of this integral. For the initial contribution of the integral, we proceed in a similar fashion by using Proposition \ref{prop_hyper} and Fubini--Tonelli,
$$\E\left(\int_1^{T_{k-1}}\frac{\left|\sum_{n \leqslant t}\frac{f(n)}{\sqrt{n}}\right|^2}{t^{1+2\sigma_k}}\right) \leqslant \int_1^{T_{k-1}}\frac{\log(t)}{t^{1+2\sigma_k}} dt \ll \int_0^{\log(T_{k-1})} u \ du \ll (\log(T_{k-1}))^2.$$
Then by Markov's inequality, we obtain that
\begin{align*}
    &\Prob\left(\frac{1}{\log(T_k)}\int_{1}^{T_{k-1}}\frac{\left|\sum_{\substack{n \leqslant t \\ P(n) \leqslant T_k}}\frac{f(n)}{\sqrt{n}}\right|^2}{t^{1+2\sigma_k}}dt > (\log_2(T_k))^{-2}\right) \\ \ll& \frac{(\log(T_{k-1}))^2(\log_2(T_k))^2}{\log(T_k)} \ll \lambda^{2k}\exp\left(2\lambda^{k-1}-\lambda^k\right).
\end{align*}
which is summable in $k$ given $\lambda > 2$. Since we will be choosing $\lambda$ as arbitrarily large, this condition is achieved. We then conclude by the first Borel--Cantelli lemma. \par
Overall, we have obtained that for sufficiently large $k$,
\begin{equation}
    \max_{x \in [T_{k-1},T_k]}|M_f(x)|^2 \geqslant \frac{1}{\log(T_k)}\int_1^\infty \frac{\left|\sum_{\substack{n \leqslant t\\P(n) \leqslant T_k}}\frac{f(n)}{\sqrt{n}}\right|^2}{t^{1+2\sigma_k}}dt - \frac{C}{(\log_2(T_k))^2}. \label{small_tail}
\end{equation}
for some $C>0$ constant. Next, we appeal to Proposition \ref{HA1} to obtain the following:
\begin{align*}
    \frac{1}{\log(T_k)}\int_1^\infty \frac{\left|\sum_{\substack{n \leqslant t\\P(n) \leqslant T_k}}\frac{f(n)}{\sqrt{n}}\right|^2}{t^{1+2\sigma_k}}dt =& \frac{1}{2\pi \log(T_k)}\int_{-\infty}^{\infty}\frac{|F_{T_k}(1/2+\sigma_k+it)|^2}{|\sigma_k+it|^2}dt \\
    \geqslant& \frac{(1+o(1))\log(T_k)}{2 \pi (\log_2(T_k))^2}\int_{\frac{1}{2\log(T_k)}}^{\frac{3}{2\log(T_k)}}|F_{T_k}(1/2+\sigma_k+it)|^2dt.
\end{align*}
This differs from the proof Hardy uses since the Rademacher Euler product is far more fiddly than the Steinhaus one, so in order to obtain sharp bounds, one should investigate over the whole $[-1/2,1/2]$ interval in a similar manner to that of Harper for his lower bounds in \cite{harper202}. However, since we do not expect to obtain sharp bounds here, we refrain from doing this. We investigate the behaviour of the random Euler product in the interval $[1/2\log(T_k),3/2\log(T_k)]$ interval since this captures the behaviour we want to exploit to prove Theorem \ref{lbtheorem} and for tidiness.\par
At this point, we observe that $\int_{\frac{1}{2\log(T_k)}}^{\frac{3}{2\log(T_k)}}\log(T_k)dt$ is a probability measure, so by Jensen's inequality,
\begin{align*}
    \frac{(1+o(1))\log(T_k)}{2\pi(\log_2(T_k))^2}&\int_{\frac{1}{2\log(T_k)}}^{\frac{3}{2\log(T_k)}}|F_{T_k}(1/2+\sigma_k+it)|^2dt \\ \geqslant \frac{1+o(1)}{2\pi (\log_2(T_k))^2}&\exp\left(\int_{\frac{1}{2\log(T_k)}}^{\frac{3}{2\log(T_k)}} 2\sum_{p \leqslant T_k}\Re\log\left(1+\frac{f(p)}{p^{1/2+\sigma_k+it}}\right)\log(T_k) dt\right) \\
    \geqslant \frac{1+o(1)}{2\pi (\log_2(T_k))^2}&\exp\left(2\log(T_k)\sum_{p \leqslant T_k}\int_{\frac{1}{2\log(T_k)}}^{\frac{3}{2\log(T_k)}}\Re\left(\frac{f(p)}{p^{1/2+\sigma_k+it}} - \frac{1}{2p^{1+2\sigma_k+2it}} + O(p^{-3/2})\right) dt \right).
\end{align*}
Then using that $p^{-3/2}$ is summable over the primes, we can lower bound the final expression by
\begin{equation}
    \frac{c}{(\log_2(T_k))^2}\exp\left(2\log(T_k)\sum_{p \leqslant T_k}\int_{\frac{1}{2\log(T_k)}}^{\frac{3}{2\log(T_k)}}\Re\left(\frac{f(p)}{p^{1/2+\sigma_k+it}} - \frac{1}{2p^{1+2\sigma_k+2it}}\right)dt \right) \label{pre_LOIL}
\end{equation}
for some $c>0$. For the two integrals, we have the following quantities in terms of $\ell$:
\begin{align*}
    \int_{\frac{1}{2\log(T_k)}}^{\frac{3}{2\log(T_k)}} \Re p^{-it} dt =& \frac{2}{\log(p)}\cos\left(\frac{\log(p)}{\log(T_k)}\right)\sin\left(\frac{\log(p)}{2\log(T_k)}\right), \\ \int_{\frac{1}{2\log(T_k)}}^{\frac{3}{2\log(T_k)}}\Re p^{-2it}dt =& \frac{1}{\log(p)}\cos\left(\frac{2\log(p)}{\log(T_k)}\right)\sin\left(\frac{\log(p)}{\log(T_k)}\right) = \frac{\cos\left(\frac{2\log(p)}{\log(T_k)}\right)}{\log(T_k)} + O\left(\frac{(\log(p))^2}{(\log(T_k))^3}\right).
\end{align*}
where the final equality was achieved by Taylor expanding the sine function. Consequently, we find a lower bound on equation (\ref{pre_LOIL}),
\begin{align*}
    \geqslant \frac{c}{(\log_2(T_k))^2} \exp\Bigg(&2\log(T_k)\Bigg(\sum_{p \leqslant T_k}\frac{f(p)}{p^{1/2+\sigma_k}}\frac{2\cos(\frac{\log(p)}{\log(T_k)})\sin(\frac{\log(p)}{2\log(T_k)})}{\log(p)} \\
    \quad &- \frac{\cos(\frac{2\log(p)}{\log(T_k)})}{2p^{1+2\sigma_k}\log(T_k)} + O\left(\frac{(\log(p))^2}{p(\log(T_k))^3}\right) \Bigg)\Bigg) \\
    \geqslant \frac{c'}{(\log_2(T_k))^2} \exp&\left(2\left(\sum_{p \leqslant T_k}\frac{f(p)\cos(\frac{\log(p)}{\log(T_k)})}{p^{1/2+\sigma_k}}\frac{2\log(T_k)}{\log(p)}\sin\left(\frac{\log(p)}{2\log(T_k)}\right) 
    - \frac{\cos(\frac{2\log(p)}{\log(T_k)})}{2p^{1+2\sigma_k}} \right)\right),
\end{align*}
for some $c' > 0$. To obtain the final inequality, we used $\sum_{p \leqslant T_k}\frac{(\log(p))^2}{p} \ll (\log(T_k))^2$. We want to remove the deterministic contribution from the above. This is achieved by Taylor expanding the exponential and observing that $p^{-3/2}$ is summable in $p$. We have
\begin{align*}
    \exp&\left(2\left(\sum_{p \leqslant T_k}\frac{f(p)\cos(\frac{\log(p)}{\log(T_k)})}{p^{1/2+\sigma_k}}\frac{2\log(T_k)}{\log(p)}\sin\left(\frac{\log(p)}{2\log(T_k)}\right) 
    - \frac{\cos(\frac{2\log(p)}{\log(T_k)})}{2p^{1+2\sigma_k}} \right)\right) \\
    =& 1 + \sum_{p \leqslant T_k}\frac{f(p)\cos(\frac{\log(p)}{\log(T_k)})}{p^{1/2+\sigma_k}}\frac{4\log(T_k)}
    {\log(p)}\sin\left(\frac{\log(p)}{2\log(T_k)}\right)  \numberthis \label{taylor_expand}\\ 
    +& \sum_{p \leqslant T_k}\left[\left(\frac{2\cos^2(\frac{\log(p)}{\log(T_k)})}{p^{1+2\sigma_k}}\left(\frac{2\log(T_k)}{\log(p)}\sin\left(\frac{\log(p)}{2\log(T_k)}\right)\right)^2 -\frac{\cos(\frac{2\log(p)}{\log(T_k)})}{p^{1+2\sigma_k}}\right) +O(p^{-3/2})\right].  
\end{align*}
The final term is clearly summable. We use the trigonometric identity $2\cos^2(x) - \cos(2x) = 1$ and the Taylor expansion $\frac{1}{u^2}\sin^2(u) = 1 - \frac{u^2}{3} + O(u^4)$, which together imply 
$$2\cos^2(x)\left(\frac{\sin(u)}{u}\right)^2 - \cos(2x) = 1 - \frac{2\cos^2(x)u^2}{3} + O(u^4).$$
Applying this to the Taylor expansion above yields
\begin{align*}
    &\sum_{p \leqslant T_k}\left(\frac{2\cos^2(\frac{\log(p)}{\log(T_k)})}{p^{1+2\sigma_k}}\left(\frac{2\log(T_k)}{\log(p)}\sin\left(\frac{\log(p)}{2\log(T_k)}\right)\right)^2 -\frac{\cos(\frac{2\log(p)}{\log(T_k)})}{p^{1+2\sigma_k}}\right) \\
    =& \sum_{p \leqslant T_k}\left[\frac{1}{p^{1+2\sigma_k}}-\left(\frac{2\cos^2(\frac{\log(p)}{\log(T_k)})}{3p^{1+2\sigma_k}}\left(\frac{\log(p)}{2\log(T_k)}\right)^2+O\left(\frac{(\log(p))^4}{p^{1+2\sigma_k}(\log(T_k))^4}\right)\right)\right],
\end{align*}
To deal with the second term, we take the bound $\cos^2(x) \leqslant 1$, and obtain that this can be bounded above by
$$\frac{1}{6}\sum_{p \leqslant T_k}\frac{(\log(p))^2}{p^{1+2\sigma_k}(\log(T_k))^2} + O\left(\frac{(\log(p))^4}{p^{1+2\sigma_k}(\log(T_k))^4}\right)$$
which is at most constant when one observes that $\sum_{p \leqslant x}\frac{(\log(p))^n}{p} = O\left((\log(x))^n\right)$. We have
$$\sum_{p \leqslant T_k}\frac{1}{p^{1+2\sigma_k}} \geqslant \sum_{p \leqslant \exp\left(\frac{\log(T_k)}{(\log_2(T_k))^2}\right)}\frac{1}{p^{1+2\sigma_k}} \gg \log_2(T)$$
We will show that this term dominates the random contribution. For this purpose, we follow the approach used in Section \ref{euler_LOIL}, which is very similar to Section 2.7 of Hardy's work \cite{shardy23}, which we detail now. Note that we will not obtain such sharp bounds as Hardy since we have even sparser test points. We will prove that
\begin{equation}
    \Prob\left(\sum_{p \leqslant T_k}\frac{2f(p)\cos(\frac{\log(p)}{\log(T_k)})}{p^{1/2+\sigma_k}}\frac{2\log(T_k)}{\log(p)}\sin\left(\frac{\log(p)}{2\log(T_k)}\right) > (\log_2(T_k))^{0.99}\right) \label{exceptional_random}
\end{equation}
is summable in $k$ by the first Borel--Cantelli lemma. First we observe that one has $\left|\frac{\sin(u)}{u}\right| \leqslant 1$ for any $u \in \mathbb{R}$ and $|\cos(x)| \leqslant 1$ for any $x \in \mathbb{R}$. This means that we can upper bound the probability in equation (\ref{exceptional_random}) by
$$\Prob\left(\sum_{p \leqslant T_k} \frac{2f(p)}{p^{1/2+\sigma_k}} > (\log_2(T_k))^{0.99}\right).$$
Then we can apply the Proposition \ref{chernoff} with $x = \frac{(\log_2(T_k))^{0.49}}{2}$, and we see that
$$\Prob\left(\sum_{p \leqslant T_k} \frac{2f(p)}{p^{1/2+\sigma_k}} > (\log_2(T_k))^{0.49}\right) \leqslant \exp\left(-\frac{(\log_2(T_k))^{0.98}}{8}+B(\log_2(T_k))^{0.97}\right),$$
where $B>0$ is some absolute constant. This is clearly summable in $k$ for sufficiently large $\lambda$. Returning to equation (\ref{taylor_expand}), then we have the following almost surely lower bound:
$$\geqslant c_1\log_2(T_k),$$
for some small $c_1 > 0$. This lower bound is achieved for any sufficiently large $k$, so we have that for some $0 < c_2 < c_1c'$
$$\max_{T_{k-1} < t \leqslant T_k}|M_f(t)|^2 \geqslant \frac{c'c_1\log_2(T_k)}{(\log_2(T_k))^2} - \frac{C}{(\log_2(T_k))^2} > \frac{c_2}{\log_2(T_k)}.$$
\section{Proof of Theorem \ref{sharp_restricted_ub}} \label{mastro_variant}
This proof follows from the work of Mastrostefano \cite{mastrostefano2022almost}. The key point is that when we have one prime factor, then we do not need to apply the union bound in equation (\ref{Q*ineq}) which saves a factor of $\sqrt{\log_2(x)}$ overall. We will outline the general steps (which are slightly different because one needs to recover the slow variation bound we prove in Lemma \ref{test_lem}). We can keep the definitions of the $x_i$ and $X_\ell$ the same as in the proof of Theorem \ref{ubtheorem} for convenience, as well as $T(\ell)= \ell^{24}$. We will sketch the proof given by Mastrostefano (with minor adjustments), and input the results we found in Section \ref{tilted_prob}. In this section, we set
$$S_f(x) := \sum_{\substack{n \leqslant x \\P(n) > \sqrt{x}}}\frac{f(n)}{\sqrt{n}}.$$
With $x_i = [e^{i^{\gamma}}]$ chosen in Lemma \ref{test_lem} and $X_\ell = \exp\left(2^{\ell^K}\right)$ with $K = \left\lfloor\frac{25}{\epsilon}\right\rfloor$. Note that we are keeping the definitions of $\gamma$ and $\epsilon$ separate, whereas Mastrostefano takes $\gamma = \epsilon$. It is likely that this distinction is not required, but for simplicity and consistency with the paper so far, we keep them separate. We define the event
\begin{equation}
    \widetilde{\mathcal{A}}_\ell := \left\{\sup_{X_{\ell -1} < x_{i-1} \leqslant X_\ell}\sup_{x_{i-1} < x \leqslant x_i}\frac{\left|S_f(x)\right|}{(\log_2(x))^{1/4+\epsilon}} > 6\right\}. \label{mastro_main}
\end{equation}
Then one observes that $\widetilde{\mathcal{A}}_\ell \subset \widetilde{\mathcal{B}}_\ell \cup \widetilde{\mathcal{C}}_\ell \cup \widetilde{\mathcal{D}}_\ell$ with
\begin{align}
    \widetilde{\mathcal{B}}_\ell :=& \left\{\sup_{X_{\ell-1} < x_{i-1} \leqslant X_\ell}\frac{|S_f(x_{i-1})|}{(\log_2(x_{i-1}))^{1/4+\epsilon}} > 2\right\}; \\
    \widetilde{\mathcal{C}}_\ell :=& \left\{\sup_{X_{\ell-1} < x_{i-1} \leqslant X_\ell} \frac{1}{(\log_2(x_{i-1}))^{1/4+\epsilon}}\sup_{x_{i-1} < x \leqslant x_i}\left|\sum_{\substack{n \leqslant x_{i-1} \\ \sqrt{x_{i-1}} < P(n) \leqslant \sqrt{x}}}\frac{f(n)}{\sqrt{n}}\right| > 2\right\}; \\
    \widetilde{\mathcal{D}}_\ell :=& \left\{\sup_{X_{\ell-1} < x_{i-1} \leqslant X_\ell}\frac{1}{(\log_2(x_{i-1}))^{1/4+\epsilon}}\sup_{x_{i-1} < x \leqslant x_i}\left|\sum_{\substack{x_{i-1} < n \leqslant x \\ P(n) > \sqrt{x}}}\frac{f(n)}{\sqrt{n}} \right| > 2\right\}.
\end{align}
By the triangle inequality, we see that $\Prob(\widetilde{\mathcal{A}}_\ell) \leqslant \Prob(\widetilde{\mathcal{B}}_\ell) + \Prob(\widetilde{\mathcal{C}}_\ell) + \Prob(\widetilde{\mathcal{D}}_\ell)$. The events $\widetilde{\mathcal{C}}_\ell$ and $\widetilde{\mathcal{D}}_\ell$ are studied to develop an analog for Lemma \ref{test_lem}, whereas $\widetilde{\mathcal{B}}_\ell$ is the where the majority of the work is needed. \par
The bound for $\widetilde{\mathcal{C}}_\ell$ follows in a very similar fashion to the proof of Lemma \ref{Q_conv} (without the presence of the integral smoothing). By a suitable adaptation of Lemma \ref{Y_sub}, the sum used in the definition of $\widetilde{\mathcal{C}}_\ell$ is a submartingale with respect to the filtration $\mathcal{F}_{n,1} := \sigma(\{f(p):p \leqslant \sqrt{n}\})$, so we can apply Proposition \ref{Lr-Doob} to it. In all, we see that after the union bound, Chebychev's inequality and Doob's $L^2$ inequality
\begin{align*}
    \Prob(\widetilde{\mathcal{C}}_\ell) \ll& \sum_{X_{\ell-1} < x_{i-1} \leqslant X_\ell}\frac{1}{(\log_2(x_{i-1}))^{1+4\epsilon}}\E\left(\sup_{x_{i-1} < x \leqslant x_i}\left|\sum_{\substack{n \leqslant x_{i-1} \\ \sqrt{x_{i-1}} < P(n) \leqslant \sqrt{x}}}\frac{f(n)}{\sqrt{n}}\right|^4\right) \\
    \ll& \sum_{X_{\ell-1} < x_{i-1} \leqslant X_\ell}\frac{1}{(\log_2(x_{i-1}))^{1+4\epsilon}}\sup_{x_{i-1} < x \leqslant x_i}\E\left|\sum_{\substack{n \leqslant x_{i-1} \\ \sqrt{x_{i-1}} < P(n) \leqslant \sqrt{x}}}\frac{f(n)}{\sqrt{n}}\right|^4 \\
    \ll& \sum_{X_{\ell-1} < x_{i-1} \leqslant X_\ell}\frac{(\log(x_i))^6}{i^2}
\end{align*}
which is certainly summable in $\ell$ with our choice of $\gamma \leqslant 1/1000 \ (< 1/6)$. \par
Next we proceed to handling $\Prob(\widetilde{\mathcal{D}}_\ell)$, which follows by partial summation noticing that Mastrostefano's method (in Section 4.2 of his paper) allows for an even sharper bound than simply $\sqrt{x_{i-1}}(\log\log(x_{i-1}))^{1/4+\epsilon}$ (for instance $\frac{\sqrt{x_{i-1}}(\log_2(x_{i-1}))^{1/4+\epsilon}}{\log(x_{i-1})}$) by noticing all one has to do is choose $\gamma$ in his proof slightly smaller (in a way which is certainly satisfied by our choice of $\gamma \leqslant 1/1000$). Then one can apply partial summation, and we obtain
$$\sup_{x_{i-1} < x \leqslant x_i}\left|\sum_{\substack{x_{i-1} < n \leqslant x \\ P(n) > \sqrt{x}}}\frac{f(n)}{\sqrt{n}} \right| \ll (\log_2(x_{i-1}))^{1/4+\epsilon}$$
almost surely (one can find a sharper bound here, but we will not need it for our purposes). \par
This leaves us to handle the final $\widetilde{\mathcal{B}}_\ell$ term. Unsurprisingly, this is again requires the most work, and we follow very similar steps as in the proof of Theorem \ref{ubtheorem}, using
$$\widetilde{V}(x_i) = \sum_{\sqrt{x_i} < p \leqslant x_i}\frac{1}{p}\left|\sum_{\substack{m \leqslant x_i/p \\ P(n) < p}}\frac{f(n)}{\sqrt{n}}\right|^2.$$
Comparing to the proof of Theorem \ref{ubtheorem} and the prime splitting arguments there, this is similar to handling $V_\ell(x_i,y_j,f)$ for a single $j$ value. This allows us to follow the manipulations which appeared in Section \ref{sum_decomp}, and we see that
\begin{align*}
    \widetilde{V}(x_i) \ll& \sum_{\sqrt{x_i} < p \leqslant x_i}\frac{\mathcal{X}}{p^2}\int_p^{p(1+1/\mathcal{X})}\left|\sum_{\substack{n \leqslant x_i/t \\ P(n) < p}}\frac{f(n)}{\sqrt{n}}\right|^2dt \\ \numberthis \label{alt_var}
    +& \sum_{\sqrt{x_i} < p \leqslant x_i}\frac{\mathcal{X}}{p^2}\int_p^{p(1+1/\mathcal{X})}\left|\sum_{\substack{x_i/t  < n \leqslant x_i/p \\ P(n) < p}}\frac{f(n)}{\sqrt{n}}\right|^2dt.
\end{align*}
We can bound the second sum in a near identical fashion to the way we handled Lemma \ref{D_conv}. Note this will be a much shorter outer sum than we encountered there in Lemma \ref{D_conv}, so we can take $r$ and $\mathcal{X}$ as we did there and it will still be summable in $\ell$. To handle the first sum, we look to Section \ref{Q_good}, and this will follow after some minor changes; namely we define a normalised \textit{submartingale} sequence with respect to filtration $\mathcal{F}_n$ opposed to the normalised \textit{supermartingale} sequence used to prove Theorem \ref{ubtheorem}.  \par
To deal with the first term, we switch the order of summation and integration like we did in Section \ref{sum_decomp}. Then the first term is
$$\int_{\sqrt{x_i}}^{x_i(1+1/\mathcal{X})}\sum_{\frac{t}{1+1/\mathcal{X}} < p \leqslant t}\frac{\mathcal{X}}{p^2}\left|\sum_{n \leqslant \frac{x_i}{t}}\frac{f(n)}{\sqrt{n}}\right|^2dt.$$
Using equation (\ref{p_bound}) and the substitution $z:=x_i/t$, we have (using $z < x_i$ on the range of the transformed integral $[0,\sqrt{x_i}]$)
$$\ll \frac{1}{\log(x_i)}\int_0^{\sqrt{x_i}}\left|\sum_{\substack{n \leqslant z \\ P(n) \leqslant x_i}}\frac{f(n)}{\sqrt{n}}\right|^2 \frac{dz}{z^{1+\frac{1}{2\log(X_\ell)}}}.$$
We then complete the range of the integral, and then we are then in a position to apply Proposition \ref{HA1}. We have the upper bound
\begin{align*}
    \ll& \frac{1}{\log(x_i)}\int_{-\infty}^{\infty}\left|\frac{F_{x_i}(1/2+\frac{1}{\log(X_\ell)}+it)}{\frac{1}{\log(X_\ell)}+it}\right|^2 dt \\ \leqslant& \frac{1}{\log(x_i)}\left(\frac{\log(x_i)}{\log(X_{\ell-1})}\right)^{(\ell-1)^{-K}}\int_{-\infty}^{\infty}\left|\frac{F_{x_i}(1/2+\frac{1}{\log(X_\ell)}+it)}{\frac{1}{\log(X_\ell)}+it}\right|^2dt. \numberthis \label{submart}
\end{align*}
The introduction of the supremum allows for the use of Proposition \ref{Doob_max} later. For now, we show that $Y_{x_i}$ is a submartingale with respect to the filtration $\mathcal{F}_{i,2} := \sigma(\{f(p): p \leqslant x_i\})$ where we define 
$$Y_{x_i} :=\frac{1}{\log(x_i)}\left(\frac{\log(x_i)}{\log(X_{\ell-1})}\right)^{(\ell-1)^{-K}}\int_{-\infty}^{\infty}\left|\frac{F_{x_i}(1/2+\frac{1}{\log(X_\ell)}+it)}{\frac{1}{\log(X_\ell)}+it}\right|^2dt.$$
The first two properties are simple to observe, so we look to the third condition. Then we see that
\begin{align*}
    \E(Y_{x_i}|\mathcal{F}_{i-1,2}) =&  \frac{1}{\log(x_{i-1})}\frac{\log(x_{i-1})}{\log(x_{i})}\left(\frac{\log(x_{i-1})}{\log(X_{\ell-1})}\right)^{(\ell-1)^{-K}}\left(\frac{\log(x_{i})}{\log(x_{i-1})}\right)^{(\ell-1)^{-K}} \\ \times& \int_{-\infty}^{\infty}\left|\frac{F_{x_{i-1}}(1/2+\frac{1}{\log(X_\ell)}+it)}{\frac{1}{\log(X_\ell)}+it}\right|^2\E\prod_{x_{i-1} < p \leqslant x_i}\left|1+\frac{f(p)}{p^{1/2+1/\log(X_\ell)+it}}\right|^2dt.
\end{align*}
By Proposition \ref{euler_prod_result}, we have that the expectation has size
$$\E\prod_{x_{i-1} < p \leqslant x_i}\left|1+\frac{f(p)}{p^{1/2+1/\log(X_\ell)+it}}\right|^2 = \frac{\log(x_i)}{\log(x_{i-1})} = \exp\left(\frac{\gamma}{i} + O\left(\frac{1}{i^2}\right)\right),$$
where we have used the $\sum_{p \leqslant x_i}\frac{1}{p^{1+2/\log(X_\ell)}} = \log_2(x_i) + O(1)$ (since $x_i < X_\ell$). From this we see
$$\left(\frac{\log(x_{i})}{\log(x_{i-1})}\right)^{(\ell-1)^{-K}} =\exp\left(\frac{\log(2)}{i\log(i)} + O\left(\frac{1}{i^2\log(i)}\right)\right) \geqslant 1.$$
In particular, we see that
$$\E(Y_{x_i}|\mathcal{F}_{i-1,2}) \geqslant Y_{x_{i-1}},$$
which shows that $Y_{x_i}$ is a submartingale. \par
Next we introduce our conditioning, where we condition on the event
$$\Sigma_\ell := \left\{\frac{1}{\log(X_{\ell-1})}\int_{-\infty}^{\infty}\left|\frac{F_{X_{\ell-1}}(1/2+1/\log(X_\ell)+it)}{\frac{1}{\log(X_\ell)}+it}\right|^2dt \leqslant \frac{(T(\ell))^{1/2}}{\ell^{K/2}}\right\}.$$
For the complement event, this was already covered in Section \ref{ce}. From this, we obtain after the various manipulations required
$$\Prob(\overline{\Sigma_\ell}) \ll \ell^{4/3},$$
which is certainly summable in $\ell$. This leaves us with controlling the event on the conditioning (and then appealing to the Azuma--Hoeffding inequality). Then by Proposition \ref{Doob_max}, we have that
$$\Prob\left(\sup_{X_{\ell-1} < x_i \leqslant X_{\ell}}Y_{x_i} > \frac{CT(\ell)}{\ell^{K/2}} \bigg| \Sigma_\ell\right) \ll \frac{\ell^{K/2}}{T(\ell)}\E(Y_{x_I}|\Sigma_\ell),$$
where $x_I$ is the largest $x_i \leqslant X_\ell$. We observe that
\begin{align*}
     &\Prob\left(\sup_{X_{\ell-1} < x_i \leqslant X_{\ell}}Y_{x_i} > \frac{cT(\ell)}{\ell^{K/2}}\right)\\\ll \frac{\ell^{K/2}}{T(\ell)}\E&\left(\frac{1}{\log(X_{\ell-1})}\int_{-\infty}^{\infty}\left|\frac{F_{X_{\ell-1}}(1/2+1/\log(X_\ell)+it)}{\frac{1}{\log(X_\ell)}+it}\right|^2dt \bigg| \Sigma_\ell\right).
\end{align*}
By our conditioning, this expression has size $(T(\ell))^{-1/2}$ which is again summable in $\ell$. All that is now left is to apply the Proposition \ref{standard-AH} to bound the good event. We define the event
$$\widetilde{\mathcal{E}}_\ell' := \left\{\sup_{X_{\ell-1} < x_{i-1} \leqslant X_\ell}\widetilde{V}(x_i) \leqslant \frac{T(\ell)}{C\ell^{K/2}}\right\}.$$
Then we can proceed in a very similar fashion to that of Lemma \ref{B1conv} by decomposing $\widetilde{\mathcal{B}}_\ell$ using conditioning on $\widetilde{\mathcal{E}}_\ell'$:
\begin{align*}
    \Prob(\widetilde{\mathcal{B}}_\ell) \leqslant& \Prob\left(\left\{\sup_{X_{\ell-1} < x_{i-1} \leqslant X_\ell}\frac{|S_f(x_{i-1})|}{(\log_2(x_{i-1}))^{1/4+\epsilon}} > 2\right\} \bigcap \left\{\widetilde{\mathcal{E}}_\ell'\right\}\right) \\  +& \Prob\left(\left\{\sup_{X_{\ell-1} < x_{i-1} \leqslant X_\ell}\frac{|S_f(x_{i-1})|}{(\log_2(x_{i-1}))^{1/4+\epsilon}} > 2\right\} \bigcap \left\{\overline{\widetilde{\mathcal{E}}_\ell'}\right\}\right).
\end{align*}
The second of these have already been dealt with, so we look to the first term. By the union bound and Proposition \ref{standard-AH}, we have
\begin{align*}
    &\Prob\left(\left\{\sup_{X_{\ell-1} < x_{i-1} \leqslant X_\ell}\frac{|S_f(x_{i-1})|}{(\log_2(x_{i-1}))^{1/4+\epsilon}} > 2\right\} \bigcap \left\{\widetilde{\mathcal{E}}_\ell'\right\}\right)\\ \leqslant& \sum_{X_{\ell-1}< x_{i-1} \leqslant X_\ell}\exp\left(-\frac{C(\log_2(x_i))^{1/2+2K\epsilon}\ell^{K/2}}{T(\ell)}\right).
\end{align*}
Summing over the $\ell$ then shows that this is further bounded above by
$$\ll \exp\left(\ell^K\left(-C\ell^{24}+\frac{\log(2)}{\gamma}\right)\right)$$
which is clearly summable in $\ell$.
\printbibliography[title= References]

\end{document}